\DeclareSymbolFont{yhlargesymbols}{OMX}{yhex}{m}{n} \DeclareMathAccent{\yhwidehat}{\mathord}{yhlargesymbols}{"62}
\definecolor{refkey}{rgb}{0.9451,0.2706,0.4941}
\definecolor{labelkey}{rgb}{0.9451,0.2706,0.4941}
\theoremstyle{definition}
 \newtheorem{dfn}{Definition}[section]
 \newtheorem{remark}[dfn]{Remark}
\theoremstyle{plain}
 \newtheorem{thm}[dfn]{Theorem}
 \newtheorem{prop}[dfn]{Proposition}
 \newtheorem{lem}[dfn]{Lemma}
 \newtheorem{cor}[dfn]{Corollary}
\newtheorem{assump}[dfn]{Assumption}
\numberwithin{equation}{section}
\newcommand{\bn}{{\bold n}}
\newcommand{\bu}{{\bold u}}
\newcommand{\bv}{{\bold v}}
\newcommand{\bw}{{\bold w}}
\newcommand{\bff}{{\bold f}}
\newcommand{\bA}{{\bold A}}
\newcommand{\bB}{{\bold B}}
\newcommand{\bD}{{\bold D}}
\newcommand{\bF}{{\bold F}}
\newcommand{\bG}{{\bold G}}
\newcommand{\bH}{{\bold H}}
\newcommand{\bI}{{\bold I}}
\newcommand{\bK}{{\bold K}}
\newcommand{\bM}{{\bold M}}
\newcommand{\bS}{{\bold S}}
\newcommand{\bU}{{\bold U}}
\newcommand{\DV}{{\rm Div}\,}
\newcommand{\dv}{{\rm div}\,}
\newcommand{\BR}{{\Bbb R}}
\newcommand{\BC}{{\Bbb C}}
\newcommand{\BN}{{\Bbb N}}
\newcommand{\tth}{\widetilde{h}}
\newcommand{\tbu}{\widetilde{\bu}}
\newcommand{\trho}{\widetilde{\rho}}
\newcommand{\tgamma}{\widetilde{\gamma}}
\newcommand{\tbn}{\widetilde{\bn}}
\newcommand{\CA}{{\mathcal A}}
\newcommand{\CB}{{\mathcal B}}
\newcommand{\CC}{{\mathcal C}}
\newcommand{\CD}{{\mathcal D}}
\newcommand{\CE}{{\mathcal E}}
\newcommand{\CF}{{\mathcal F}}
\newcommand{\CL}{{\mathcal L}}
\newcommand{\CN}{{\mathcal N}}
\newcommand{\CG}{{\mathcal G}}
\newcommand{\CR}{{\mathcal R}}
\newcommand{\CS}{{\mathcal S}}
\newcommand{\CT}{{\mathcal T}}
\newcommand{\CH}{{\mathcal H}}
\newcommand{\CU}{{\mathcal U}}
\newcommand{\CV}{{\mathcal V}}
\newcommand{\CX}{{\mathcal X}}
\newcommand{\CZ}{{\mathcal Z}}
\newcommand{\R}{{\mathbb R}}
\newcommand{\C}{{\mathbb C}}
\newcommand{\N}{{\mathbb N}}
\newcommand{\fA}{{\frak A}}
\newcommand{\fB}{{\frak B}}
\newcommand{\fC}{{\frak C}}
\newcommand{\fX}{{\frak X}}
\newcommand{\bg}{{\bold g}}
\newcommand{\pd}{\partial}
\renewcommand{\Re}{\operatorname{Re}}
\newcommand{\vp}{\varphi}
\renewcommand{\labelenumi}{{\rm(\arabic{enumi})}}
\title{On the $\CR$-boundedness of solution operators for a compressible fluid model of Korteweg type in general domains}
\author{Sri MARYANI
\thanks{Department of Mathematics, Faculty of Mathematics and Natural Sciences,
Jenderal Soedirman University,
\endgraf
e-mail address: sri.maryani@unsoed.ac.id
\endgraf
Supported by International Research Collaboration (IRC)'s scheme 2024, LPPM UNSOED
}\enskip and \enskip
Miho MURATA
\thanks{Department of Mathematical and System Engineering,
Faculty of Engineering,
Shizuoka University, 
\endgraf
3-5-1 Johoku, Chuo-ku, Hamamatsu-shi, Shizuoka,
432-8561, Japan.
\endgraf
e-mail address: murata.miho@shizuoka.ac.jp
\endgraf
Partially supported by 
JSPS Grants-in-Aid for Early-Career Scientists 21K13819 and Grants-in-Aid for Scientific Research (B) 22H01134}
}
\date{}
\begin{document}
\maketitle

\begin{abstract}
In this paper, we consider a resolvent problem 
arising from the free boundary problem for the compressible fluid model 
of Korteweg type, which is called the {\it Navier-Stokes-Korteweg system}, with surface tension in general domains.
The Navier-Stokes-Korteweg system describes the liquid-vapor two-phase flow with non-zero thickness phase boundaries, which is often called the diffuse interface model.
Our purpose is to show the solution operator families of the resolvent problem are $\CR$-bounded, which gives us the generation of analytic semigroup and 
the maximal regularity 
in the $L_p$-in-time and $L_q$-in-space setting by applying the Weis
operator valued Fourier multiplier theorem \cite{W}. 
\end{abstract}
{\bf Keywords:}
Free boundary problem, Navier-Stokes-Korteweg system, Surface tension, $\CR$-boundedness, Maximal  regularity, General domains

\section{Introduction}\label{sec:intro}
\subsection{Model}\label{subsec:model}
In this paper, we consider the compressible fluid flow, especially liquid-vapor two-phase flow, with capillarity effects
in a general domain $\Omega \subset \R^N$, $N \ge 2$, which we define as a uniform $C^3$-domain.
The fluid flow is described by the following system:  
\begin{equation}\label{nsk}
\left\{
\begin{aligned}
	&\pd_t \rho + \dv (\rho \bu) = 0 & &\quad\text{in $\Omega$, $t>0$}, \\
	&\rho (\pd_t \bu + \bu \cdot \nabla \bu)  
	= \DV (\bS(\bu)+\bK(\rho) - P(\rho)\bI)& &\quad\text{in $\Omega$, $t>0$}	
\end{aligned}
\right.
\end{equation}
with the initial condition and boundary conditions which we denote below,
where $\pd_t = \pd/\pd t$, $t$ is the time variable.
Here $\rho = \rho(x)$, $x=(x_1, \ldots, x_N)$
and  
$\bu = \bu(x) = (u_1(x), \ldots, u_N(x))^\mathsf{T} \footnote{$\bA^\mathsf{T}$denotes the transpose of $\bA$.}$
are unknown density field and velocity field, respectively;
$P(\rho)$ is the pressure field 
satisfying a $C^\infty$ function defined on
$\rho\ge 0$;
$\bI$ 
is the $N\times N$ identity matrix.
For any vector of functions $\bv = (v_1, \ldots, v_N)^\mathsf{T}$, 
$\dv \bv = \sum_{j=1}^N\pd_jv_j$ with $\pd_j=\pd/\pd x_j$,
and also for any $N\times N$ matrix field $\bM$, $\DV \bM = (\sum_{k=1}^N\pd_k M_{1k}, \dots, \sum_{k=1}^N\pd_k M_{Nk})^\mathsf{T}$.
The viscous stress tensor $\bS(\bu)$ is given by
\begin{align*}
\bS (\bu) &= \mu \bD(\bu) + 
(\nu - \mu) \dv \bu \bI, 
\end{align*} 
where $\mu$ and $\nu$ are the viscosity coefficients,
$\bD(\bu)$ denotes 
the deformation tensor whose $(j, k)$ components are
 $D_{jk}(\bu) = \pd_ju_k
+ \pd_ku_j$.
In addition,
$\bK(\rho)$ is the capillary stress tensor, which is called the Korteweg stress tensor, 
given by 
\begin{equation}\label{ks}
	\bK (\rho) = \frac{\kappa}{2} (\Delta \rho^2 -  |\nabla \rho|^2 )\bI 
- \kappa \nabla \rho \otimes \nabla \rho,
\end{equation}
 where 
 $\kappa$ is the capillary coefficient;
$\nabla \rho \otimes \nabla \rho$ denotes an $N\times N$ matrix with $(j, k)^{\rm th}$
component $(\pd_j\rho)(\pd_k\rho)$
for $\nabla \rho = (\pd_1 \rho, \dots, \pd_N \rho)^\mathsf{T}$.
The system \eqref{nsk} is often called the {\it Navier-Stokes-Korteweg system}.
Note that \eqref{nsk} without $\bK(\rho)$ is equivalent to the usual model of compressible viscous fluids.

The formulation of the theory of capillarity with diffuse interfaces was first introduced by Korteweg \cite{K}.
Korteweg proposed an explicit expression for the capillary stress tensor including density gradients to explain the non-local molecular interaction within the interface.
Afterward, Dunn and Serrin \cite{DS} introduced the capillary stress tensor \eqref{ks} by an approach of rational mechanics.

We recall that mathematical results on \eqref{nsk}.
The whole-space problem has been studied by many authors. 
Hattori and Li \cite{HL1, HL2} first showed
the local and global unique existence of the strong solution
in Sobolev space. 
Hou, Peng, and Zhu \cite{HPZ} improved the results \cite{HL1, HL2}
when the total energy is small. 
Danchin and Desjardins \cite{DD} proved the local and global existence and uniqueness of the solution 
in the critical Besov space.
The existence theorem in the case of zero sound speed $P'(\rho_*) = 0$ with some constant $\rho_*$,
we refer to \cite{CK, KT, KM, SX23}.
Large-time behavior of solutions was established by 
\cite{WT, TW, TWX, TZ} 
in the $L_2$-setting; \cite{MS} in the $L_p$-$L_q$ setting.
For the optimal decay estimates, we refer to \cite{CK} in the $L_2$ critical Besov spaces; \cite{KSX} in the $L_p$-$L_q$ framework;
\cite{SX} in the $L_p$ critical Besov spaces.
In the non-isentropic case, we refer to \cite{HL96, HPZ18}.
The existence of global weak solution was studied by \cite{BDL, H2011}.

Regarding boundary value problems,
the system \eqref{nsk} was studied in a domain $\Omega$ with the Dirichlet boundary condition:
\begin{equation}\label{bc}
	\bu=0, \quad \bn \cdot \nabla \rho=0 \quad\text{on $\Gamma$.}
\end{equation}
Kotschote \cite{K2008} proved the existence and uniqueness of strong solutions in both bounded and exterior domains locally in time in the $L_p$-setting.
He also considered a non-isothermal case for Newtonian and Non-Newtonian fluids in \cite{K2010, K2012}.
Moreover, he proved the global existence and exponential decay estimates of strong solutions in a bounded domain for small initial data in \cite{K2014}. 
Saito \cite{S2020} proved the existence of $\CR$-bounded solution operator families 
for the large resolvent parameter, and then he obtained the maximal $L_p$-$L_q$ regularity for the linearized problem in uniform $C^3$-domains and the generation of analytic semigroup
associated with the linearized problem.
Concerning the resolvent estimates with small resolvent parameters,
Kobayashi, the second author, and Saito \cite{KMS} proved the resolvent estimate for $\lambda \in \overline{\C_+}=\{z \in \C \mid \Re z\ge 0\}$ in a bounded domain
under the condition that the pressure $P(\rho)$ satisfies not only $P'(\rho_*) \ge 0$
but also $P'(\rho_*)<0$, and then \cite{KMS} obtained a global solvability.
Moreover, they proved the resolvent estimate for $\lambda \in \overline{\C_+}$ with $|\lambda| \ge \delta$ for any $\delta>0$ in an exterior domain if $P'(\rho_*) \ge 0$.

In \cite{IS}, the local solvability for the system \eqref{nsk} in general domains with the Dirichlet and the slip boundary conditions was proved in the $L_p$-$L_q$ setting. 

On the other hand, we will discuss a free boundary problem with the surface tension;
namely, 
the domain $\Omega$ and the boundary condition \eqref{bc} are replaced by a time-dependent domain $\Omega_t$ and
\begin{equation}\label{bc nsk}
\left\{
\begin{aligned}
	&(\bS(\bu)+\bK(\rho)-P(\rho)\bI)\bn_t
	=-P(\rho_*)\bn_t+\sigma(H(\Gamma_t)-H(\Gamma_0))\bn_t& &\quad\text{on $\Gamma_t$, $t>0$}, \\
	&\bn_t \cdot \nabla \rho=0 & &\quad\text{on $\Gamma_t$, $t>0$}, \\	
	&V_{\Gamma_t}=\bu \cdot \bn_t & &\quad\text{on $\Gamma_t$, $t>0$},
\end{aligned}
\right.
\end{equation}
respectively, where 
$\bn_t$ is the unit outer normal;
$\sigma$ is the coefficient of the surface tension;
$H(\Gamma_t)$ is the $N-1$-fold mean curvature on $\Gamma_t$;
$\Gamma_0$ is the boundary of the given initial domain $\Omega_0$.
The third equation of \eqref{bc nsk} is called the kinematic equation, where
$V_{\Gamma_t}$ is the velocity of the evolution of free surface $\Gamma_t$ in the direction of $\bn_t$.
 Since $\Omega_t$ is unknown, we transform $\Omega_t$ to the fixed domain $\Omega_0$ by the so-called {\it Hanzawa transform} \cite{H}, 
and then we obtain the linearized equations in $\Omega$ as follows:
\begin{equation}\label{linear}
\left\{
\begin{aligned}
	&\pd_t \rho + \gamma_1 \dv \bu = d & \quad&\text{in $\Omega$, $t>0$}, \\
	&\pd_t \bu - \gamma_4^{-1} 
	\DV\{\gamma_2 \bD(\bu) + (\gamma_3 - \gamma_2) \dv \bu \bI + \gamma_1  \Delta \rho \bI\}= \bff& \quad&\text{in $\Omega$, $t>0$},\\
	&\{\gamma_2 \bD(\bu) + (\gamma_3 - \gamma_2) \dv \bu \bI + \gamma_1  \Delta \rho \bI \} \bn - \sigma \Delta_{\Gamma} h \bn =\bg & \quad&\text{on $\Gamma$, $t>0$},\\
	&\bn \cdot \nabla \rho = k & \quad&\text{on $\Gamma$, $t>0$},\\
	&\pd_t h - \bu \cdot \bn = \zeta & \quad&\text{on $\Gamma$, $t>0$}\\
	&(\rho, \bu, h)|_{t=0} = (\rho_0, \bu_0, h_0) & \quad&\text{in $\Omega$}.
\end{aligned}
\right.
\end{equation}
Applying the Laplace transform to \eqref{linear}, we obtain the resolvent problem: 
\begin{equation}\label{main prob}
\left\{
\begin{aligned}
	&\lambda \rho + \gamma_1 \dv \bu = d & \quad&\text{in $\Omega$}, \\
	&\lambda \bu - \gamma_4^{-1} 
	\DV\{\gamma_2 \bD(\bu) + (\gamma_3 - \gamma_2) \dv \bu \bI + \gamma_1  \Delta \rho \bI\}= \bff& \quad&\text{in $\Omega$},\\
	&\{\gamma_2 \bD(\bu) + (\gamma_3 - \gamma_2) \dv \bu \bI + \gamma_1  \Delta \rho \bI \} \bn - \sigma \Delta_{\Gamma} h \bn =\bg & \quad&\text{on $\Gamma$},\\
	&\bn \cdot \nabla \rho = k & \quad&\text{on $\Gamma$},\\
	&\lambda h - \bu \cdot \bn = \zeta & \quad&\text{on $\Gamma$}.
\end{aligned}
\right.
\end{equation}
Concerning a free boundary problem, Saito \cite{S} considered the resolvent problem arising from a free boundary problem without surface tension in $\R^N_+$;
namely, $\sigma=0$ in \eqref{bc nsk}.
He constructed $\CR$-bounded solution operator families for the resolvent problems in $\R^N$ and $\R^N_+$. 
In \cite{MM}, we recently studied the resolvent problem into account surface tension in $\R^N_+$ by the same motivation as \cite{S}.

As a follow-up study of \cite{MM}, we discuss the existence of the $\CR$-bounded solution operator families for the resolvent problem \eqref{main prob} in a general domain. 
The $\CR$-boundedness for the solution operator families implies that the generation of analytic semigroup and the maximal $L_p$-$L_q$ regularity for the linearized equations, which is the key issue when we consider the local solvability for the nonlinear problem in the maximal $L_p$-$L_q$ regularity class.

This paper is organized as follows.
In the next section, we state the main theorem concerning the $\CR$-bounded operator families for the Navier-Stokes-Korteweg system with surface tension in a uniform $C^3$-domain. 
As an application of the main theorem, the generation of analytic semigroup and the maximal $L_p$-$L_q$ regularity for the linearized equations are also obtained.
In Section \ref{sec.w}, we recall the results for the whole-space problem and the half-space problem.
Section \ref{sec.bent} proves the $\CR$-boundedness for the solution operators in the bent half-space. 
Section \ref{sec.general1} treats the resolvent problem in a uniform $C^3$-domain.
In Section \ref{sec.general1}, the problem is reduced to the problem in the whole-space and the bent half-space by the localization technique. Then we prove the remainder terms are small in the suitable norms as in Lemma \ref{lem:Rbound V g}. To prove Lemma \ref{lem:Rbound V g}, we need the estimate for $\lambda^{3/2} \rho$.
Because of this reason, we first construct the solutions for the right member including the term $\lambda^{1/2}d$ 
in Section \ref{sec.w}, Section \ref{sec.bent}, and Section \ref{sec.general1}. 
Then we finally construct the solutions for the right member without $\lambda^{1/2}d$ in Section \ref{sec:general2}.

\subsection{Notation and Assumptions}

We summarize several symbols and functional spaces used 
throughout the paper.
Let $\N$, $\R$, and $\C$ denote the sets of 
all natural numbers, real numbers, and complex numbers, respectively. 
Set $\N_0=\N \cup \{0\}$ and $\C_{+, \omega} = \{z \in \C \mid \Re z > \omega\}$ for $\omega \ge 1$.

We use boldface letters, e.g. $\bu$ to 
denote vector-valued functions. 

For scalar function $f$ and $N$-vector functions $\bg$, we set
\begin{align*}
\nabla f &= (\pd_1f,\ldots,\pd_Nf)^\mathsf{T},&
\enskip 
\nabla^2 f &= (\pd_i\pd_j f)_{1\le i, j\le N},\\
\nabla^3 f &= \{\pd_i \pd_j \pd_k f \mid i, j, k = 1,\ldots, N \},\\ 
\nabla \bg &= (\pd_j g_i)_{1\le i, j\le N},&
\enskip
\nabla^2 \bg &= \{\pd_i \pd_j g_k \mid i, j, k = 1,\ldots, N \},&
\end{align*} 
where $\pd_i = \pd/\pd x_i$.

Let $\Omega$ be a domain of $\R^N$.
For $N$-vector functions $\bff=(f_1(x), \ldots, f_N(x))^\mathsf{T}$ and $\bg=(g_1(x), \ldots, g_N(x))^\mathsf{T}$ on $\Omega$;
matrix-valued functions $\bA=(a_{ij}(x))_{1\le i, j\le N}$ and $\bB=(b_{ij}(x))_{1\le i, j\le N}$ on $\Omega$, 
set
\begin{align*}
(\bff, \bg)_{\Omega}&= \sum^N_{j=1} \int_{\Omega} f_j(x) g_j(x)\,dx, &
(\bff, \bg)_{\Gamma}&= \sum^N_{j=1} \int_{\Gamma} f_j(x) g_j(x)\,dS, \\
(\bA, \bB)_{\Omega}&= \sum^N_{i, j=1} \int_{\Omega} a_{ij}(x) b_{ij}(x)\,dx, &
(\bA, \bB)_{\Gamma}&= \sum^N_{i, j=1} \int_{\Gamma} a_{ij}(x) b_{ij}(x)\,dS,
\end{align*}
where $dS$ denotes the surface element of $\Gamma$.

For Banach spaces $X$ and $Y$, $\CL(X,Y)$ denotes the set of 
all bounded linear operators from $X$ into $Y$,
$\CL(X)$ is the abbreviation of $\CL(X, X)$, and 
$\rm{Hol}\,(U, \CL(X,Y))$ denotes
 the set of all $\CL(X,Y)$ valued holomorphic 
functions defined on a domain $U$ in $\BC$. 

For $1 \le p \le \infty$ and $m \in \N$,
$L_p(D)$ and $H_p^m(D)$ 
denote the Lebesgue space and Sobolev space on domain $D \subset \R^N$;
while $\|\cdot\|_{L_q(D)}$ and $\|\cdot\|_{H_q^m(D)}$
denote their norms, respectively.
In addition, $B^s_{q, p}(D)$ is the Besov space on domain $D$ for $1 < q < \infty$ and $s>0$ with the norm $\|\cdot\|_{B_{q, p}^s(D)}$.
For simplicity, we write $W^s_q(D) = B^s_{q, q}(D)$.
The $d$-product space of $X$ is defined by 
$X^d=\{f=(f, \ldots, f_d) \mid f_i \in X \, (i=1,\ldots,d)\}$,
while its norm is denoted by 
$\|\cdot\|_X$ instead of $\|\cdot\|_{X^d}$ for the sake of 
simplicity. 

For the Banach space X, we also denote the usual Lebesgue space and Sobolev space of $X$-valued functions 
defined on time interval $I$ by $L_p(I, X)$ and $H^m_p(I, X)$ with $m \in \N$; 
while $\|\cdot\|_{L_p(I, X)}$, $\|\cdot\|_{H_p^m(I, X)}$
denote their norms, respectively.

Let $\gamma \ge 1$.
Set
\begin{align*}
L_{p, \gamma}(\R, X) & = \{f(t) \in L_{p, {\rm loc}}
(\R, X) \mid e^{-\gamma t}f(t) \in L_p(\R, X)\}, \\
L_{p, \gamma, 0}(\R, X) & = \{f(t) \in L_{p, \gamma}(\R, X) 
\mid
f(t) = 0 \enskip (t < 0)\}, \\
H^m_{p, \gamma}(\R, X) & = \{f(t) \in
 L_{p, \gamma}(\R, X) \mid 
e^{-\gamma t}\pd_t^j f(t) \in L_p(I, X)
\enskip (j=1, \ldots, m)\}, \\
H^m_{p, \gamma, 0}(\R, X) & = H^m_{p, \gamma}(\R, X)
\cap L_{p, \gamma, 0}(\R, X).
\end{align*}
Let $\CL$ and $\CL^{-1}$ denote the Laplace transform and the
Laplace inverse transform, respectively, which are defined by 
\[
\CL[f](\lambda) = \int_{\R} e^{-\lambda t} f(t) \,dt, \quad
\CL^{-1}[g](t) = \frac{1}{2\pi}
\int_{\R} e^{\lambda t} g(\tau) \,d\tau
\]
with $\lambda = \gamma + i\tau \in \C$.  Given $s \ge 0$
and $X$-valued function $f(t)$, we set 
\[
(\Lambda^s_\gamma f)(t) = \CL^{-1} [\lambda^s
\CL[f](\lambda)](t).
\]
The Bessel potential space of $X$-valued functions 
of order $s$ is defined by the following:
\begin{align*}
H^s_{p, \gamma}(\R, X) & = \{f \in L_p(\R, X) \mid 
e^{-\gamma t} (\Lambda_\gamma^s f)(t) \in L_p(\R, X)\}, \\
H^s_{p, \gamma, 0}(\R, X) & = \{f \in H^s_{p, \gamma}
(\R, X) \mid f(t) = 0 \enskip(t < 0)\}.
\end{align*}

The letter $C$ denotes generic constants and the constant 
$C_{a,b,\ldots}$ depends on $a,b,\ldots$. 
The values of constants $C$ and $C_{a,b,\ldots}$ 
may change from line to line. 

Here we introduce the definition of uniform $C^3$-domains and assumptions for the coefficients $\gamma_j$ ($j=1, 2, 3, 4$).
\begin{dfn}\label{def:general domain}
Let $\Omega$ be a domain of $\R^N$ with the boundary $\Gamma \neq \emptyset$.
Then $\Omega$ is called a uniform $C^3$-domain, if there exist positive constants $a_1$, $a_2$, and $K$ such that the following assertions hold:
For any $x_0 = (x_{01}, \dots, x_{0N}) \in \Gamma$, there exist a coordinate number $j$ and a $C^3$ function $h(x')$ defined on $B'_{a_1}(x_0')$ such that $\|h\|_{H^3_\infty(B_{a_1}'(x_0'))} \le K$ and
\begin{align*}
	\Omega \cap B_{a_2}(x_0) &= \{x \in \R^N \mid x_j > h(x')~(x' \in B_{a_1}'(x'_0))\} \cap B_{a_2}(x_0),\\
	\Gamma \cap B_{a_2}(x_0) &= \{x \in \R^N \mid x_j = h(x')~(x' \in B_{a_1}'(x'_0))\} \cap B_{a_2}(x_0).
\end{align*}
Here we have set
\begin{align*}
	x' &= (x_{1}, \dots, x_{j-1}, x_{j+1}, \dots, x_{N}),\\
	x'_0 &= (x_{01}, \dots, x_{0j-1}, x_{0j+1}, \dots, x_{0N}),\\
	B'_{a_1}(x_0')&=\{x' \in \R^{N-1} \mid |x'-x'_0| < a_1\},\\
	B_{a_2}(x_0)&=\{x \in \R^N \mid |x-x_0| < a_2\}.
\end{align*}
\end{dfn}

\begin{assump}\label{assumption gamma}
The coefficient $\gamma_j = \gamma_j(x)$ $(j=1, 2, 3, 4)$ satisfy the following conditions.

\begin{enumerate}
\item
The coefficient $\gamma_j = \gamma_j(x)$ are real-valued uniformly Lipschitz continuous functions on $\overline \Omega$:
there exists a positive constant $L$ such that $|\gamma_j(x) - \gamma_j(y)| \le L |x-y|$ for any $x, y \in \overline \Omega$ and $j = 1, 2, 3, 4$.

\item
There exist positive constants $\gamma_*$, $\gamma^*$ such that $\gamma_* \le \gamma_j(x) \le \gamma^*$ for any $x \in \overline \Omega$ and $j = 1, 2, 3, 4$.

\item
The conditions
$(\gamma_2(x) + \gamma_3(x))^2 \neq 2(\gamma_1(x))^2\gamma_4(x)$ and $(\gamma_1(x))^2 \gamma_4(x) \neq \gamma_2(x) \gamma_3(x)$
hold for any $x \in \Gamma$.
\end{enumerate}
\end{assump}
\begin{remark}\label{rem:lip}
Let $C^{0, 1}(\overline \Omega)$ be a Banach space of all bounded and uniformly Lipschitz continuous functions on $\overline \Omega$ with the norm:
\[
	\|f\|_{C^{0, 1}(\overline \Omega)} = \|f\|_{L_\infty(\Omega)} + \sup_{x, y \in \Omega, x \neq y} \frac{|f(x)-f(y)|}{|x-y|}.
\]
Note that $C^{0, 1}(\overline \Omega) \subset H^1_\infty(\Omega)$ with $\|f\|_{H^1_\infty(\Omega)} \le \|f\|_{C^{0, 1}(\overline \Omega)}$ for any $f \in C^{0, 1}(\overline \Omega)$
(cf.\,e.g.\,\cite[Lemma 3.12]{T}).
\end{remark}

\section{Main theorem}
To state the main theorem, we introduce
the definition of the $\CR$-boundedness of operator families and notation.

\begin{dfn}\label{dfn2}
Let $X$ and $Y$ be Banach spaces, and let $\CL(X,Y)$ be the set of 
all bounded linear operators from $X$ into $Y$.
A family of operators $\CT \subset \CL(X,Y)$ is called $\CR$-bounded 
on $\CL(X,Y)$, if there exist constants $C > 0$ and $p \in [1,\infty)$ 
such that for any $m \in \BN$, $\{T_{j}\}_{j=1}^{m} \subset \CT$,
$\{f_{j}\}_{j=1}^{m} \subset X$ and sequences $\{r_{j}\}_{j=1}^{m}$
 of independent, symmetric, $\{-1,1\}$-valued random variables on $[0,1]$, 
we have  the inequality:
$$
\bigg \{ \int_{0}^{1} \|\sum_{j=1}^{m} r_{j}(u)T_{j}f_{j}\|_{Y}^{p}\,du
 \bigg \}^{1/p} \leq C\bigg\{\int^1_0
\|\sum_{j=1}^m r_j(u)f_j\|_X^p\,du\biggr\}^{1/p}.
$$ 
The smallest such $C$ is called $\CR$-bound of $\CT$, 
which is denoted by $\CR_{\CL(X,Y)}(\CT)$.
\end{dfn}
\begin{remark}\label{rem:def of rbdd}
The $\CR$-boundedness implies that the uniform boundedness of the operator family $\CT$.
In fact, choosing $m=1$ in Definition \ref{dfn2}, we observed that there exists a constant $C$ such that $\|T f\|_Y \le C \|f\|_X$ holds for any $T \in \CT$ and $f \in X$.
\end{remark}

Set 
\begin{equation}\label{def:space}
\begin{aligned}
	X_q(D) &=  H^1_q(D) \times L_q(D)^N \times H^1_q(D) ^N
	\times H^2_q(D) \times W^{2-1/q}_q(\pd D),\\
	\CX_q(D) &=H^1_q(D) \times L_q(D)^\CN \times W^{2-1/q}_q(\pd D),\quad
	\CN = N+N^2+N+N^2+N+1,\\
	X^0_q(D) &=  H^1_q(D) \times L_q(D)^N \times H^1_q(D) ^N
	\times H^2_q(D) \times H^2_q(D),\\
	\CX^0_q(D) &=H^1_q(D) \times L_q(D)^\CN \times H^2_q(D),\\
	\CF_\lambda \bF &=(d, \bff, \nabla \bg, \lambda^{1/2} \bg, \nabla^2 k, \nabla \lambda^{1/2}k, \lambda k, \zeta),\\
	\fA_q (D) &= L_q(D)^{N^3+N^2} \times H^1_q(D), \quad \CR_\lambda \rho =(\nabla^3 \rho, \lambda^{1/2}\nabla^2\rho, \lambda \rho),\\
	\fB_q(D) &= L_q(D)^{N^3+N^2+N},
	\quad \CS_\lambda \bu =(\nabla^2 \bu, \lambda^{1/2}\nabla \bu, \lambda \bu),\\
	\fC_q(\pd D) &= W^{3-1/q}_q(\pd D) \times W^{2-1/q}_q(\pd D), \quad \fC_q(D) = H^3_q(D) \times H^2_q(D),
	\quad \CT_\lambda h =(h, \lambda h),
\end{aligned}
\end{equation}
where $D$ is a domain of $\R^N$ with the boundary $\pd D$.

Let us state the main result in this paper.
\begin{thm}\label{thm:main Rbound'}
Let $1<q<\infty$, and let $\Omega$ be a uniform $C^3$-domain. 
Assume that $\gamma_j$ $(j =1, 2, 3, 4)$ satisfies Assumption \ref{assumption gamma}.
Then there exists a constant $\lambda_0 \ge 1$ such that the following assertions hold true:

$\thetag1$ 
For any $\lambda \in \C_{+, \lambda_0}$ there exist operators  
\begin{align*}
&\CA (\lambda) \in 
{\rm Hol} (\C_{+, \lambda_0}, 
\CL(\CX_q(\Omega), H^3_q(\Omega)))\\
&\CB (\lambda) \in 
{\rm Hol} (\C_{+, \lambda_0}, 
\CL(\CX_q(\Omega), H^2_q(\Omega)^N)),\\
&\CC (\lambda) \in 
{\rm Hol} (\C_{+, \lambda_0}, 
\CL(\CX_q(\Omega), W^{3-1/q}_q(\Gamma)))
\end{align*}
such that 
for any $\bF=(d, \bff, \bg, k, \zeta) \in X_q(\Omega)$, 
\begin{equation*}
\rho = \CA (\lambda) \CF_\lambda \bF, \quad
\bu = \CB (\lambda) \CF_\lambda \bF, \quad
h = \CC (\lambda) \CF_\lambda \bF
\end{equation*}
are unique solutions of problem \eqref{main prob}.

$\thetag2$ 
There exists a positive constant $r$ such that
\begin{equation} \label{rbdd general}
\begin{aligned}
&\CR_{\CL(\CX_q(\Omega), \fA_q (\Omega))}
(\{(\tau \pd_\tau)^n \CR_\lambda \CA (\lambda) \mid 
\lambda \in \C_{+, \lambda_0}\}) 
\leq r,\\
&\CR_{\CL(\CX_q(\Omega), \fB_q(\Omega))}
(\{(\tau \pd_\tau)^n \CS_\lambda \CB (\lambda) \mid 
\lambda \in \C_{+, \lambda_0}\}) 
\leq r,\\
&\CR_{\CL(\CX_q(\Omega), \fC_q(\Gamma))}
(\{(\tau \pd_\tau)^n \CT_\lambda \CC (\lambda) \mid 
\lambda \in \C_{+, \lambda_0}\}) 
\leq r
\end{aligned}
\end{equation}
for $n = 0, 1$.
Here, above constant $r$ depend solely on $N$, $q$, $L$, $\gamma_*$ and $\gamma^*$. 
\end{thm}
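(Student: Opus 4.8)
The strategy is the standard localization argument for resolvent problems in general domains, adapted to the Navier–Stokes–Korteweg system with surface tension. The plan is to combine the whole-space and half-space results recalled in Section \ref{sec.w} with the bent-half-space result of Section \ref{sec.bent}, patch them together by a partition of unity, and absorb the resulting commutator (remainder) terms by choosing $\lambda_0$ large. Because the boundary condition involves $\sigma\Delta_\Gamma h$ and the $h$-equation reads $\lambda h - \bu\cdot\bn = \zeta$, the solution operator must track not only $\rho$, $\bu$ but also $h$, which lives on $\Gamma$; hence the target spaces are $H^3_q$, $H^2_q{}^N$, and $W^{3-1/q}_q(\Gamma)$, and the $\CR$-bound must control $\CR_\lambda\rho=(\nabla^3\rho,\lambda^{1/2}\nabla^2\rho,\lambda\rho)$, $\CS_\lambda\bu$, and $\CT_\lambda h=(h,\lambda h)$ simultaneously.

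\textbf{Step 1 (Reduction to model problems).} Using Definition \ref{def:general domain}, cover $\overline\Omega$ by balls $\{B_j\}$: finitely many (or a uniformly locally finite family) of boundary balls $B_j$ on which $\Omega$ is, after a rigid motion, a bent half-space $\Omega_j = \{x_N > h_j(x')\}$ with $\|h_j\|_{H^3_\infty}\le K$, plus interior balls on which $\Omega$ coincides with $\R^N$. Choose a subordinate partition of unity $\{\zeta_j\}$ with companion cutoffs $\{\tilde\zeta_j\}$, $\tilde\zeta_j\equiv 1$ on $\operatorname{supp}\zeta_j$. For each $j$ solve the corresponding model problem with data $(\zeta_j d, \zeta_j\bff, \ldots)$ using the operators constructed in Sections \ref{sec.w}–\ref{sec.bent}; call these local solutions $(\rho_j,\bu_j,h_j)$. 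The key structural point, exactly as in the introduction's remark about needing the estimate for $\lambda^{3/2}\rho$, is that at this stage we must work with the augmented data tuple $\CF_\lambda\bF$ (which includes $\lambda^{1/2}d$ hidden through $\nabla\lambda^{1/2}k$, etc.) so that the remainder terms land in spaces where they can be estimated; this is why $\CF_\lambda\bF$ is phrased the way it is in \eqref{def:space}.

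\textbf{Step 2 (Parametrix and remainder).} Set $\rho = \sum_j \zeta_j\rho_j$, $\bu = \sum_j \zeta_j\bu_j$, $h = \sum_j \zeta_j h_j$. Plugging into \eqref{main prob}, the top-order terms reproduce the data, while the lower-order commutator terms $[\DV\{\cdots\},\zeta_j]$, $[\Delta_\Gamma,\zeta_j]$ and the terms created by differentiating $\zeta_j$ against $\nabla^3\rho_j$, $\nabla^2\bu_j$, $\nabla h_j$ produce a perturbation. Writing the result as $(I + \mathcal V(\lambda))\CF_\lambda\bF + (\text{residual})$, one checks that $\mathcal V(\lambda)$ maps into the same data space, that it is $\CR$-bounded, and — this is the crux — that its $\CR$-bound is $o(1)$ as $|\lambda|\to\infty$; this is precisely the content promised by Lemma \ref{lem:Rbound V g}. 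The gain comes from the fact that each commutator lowers the order by at least one derivative, which on the half-space/whole-space model problems costs a factor $\lambda^{-1/2}$ in the relevant $\CR$-bound, while the coefficients $\nabla\zeta_j$ are uniformly bounded because of the uniform $C^3$ and uniform partition-of-unity structure; the Lipschitz bound $L$ and $\gamma_*,\gamma^*$ enter here to make the whole-space/bent-half-space $\CR$-bounds depend only on $N,q,L,\gamma_*,\gamma^*$.

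\textbf{Step 3 (Neumann series and holomorphy).} Fix $\lambda_0\ge 1$ so large that $\CR(\mathcal V(\lambda)) \le 1/2$ for $\lambda\in\C_{+,\lambda_0}$; then $I+\mathcal V(\lambda)$ is invertible with inverse given by a Neumann series that is $\CR$-bounded uniformly in $\lambda$ and holomorphic in $\lambda$ (since each $\zeta_j$-localized model operator is holomorphic by Sections \ref{sec.w}–\ref{sec.bent}, and $\CR$-boundedness is stable under the operations — sums, products, Neumann series, and $(\tau\pd_\tau)$-differentiation via the uniform estimates for $n=0,1$). Composing with the parametrix and accounting for the residual (which is treated the same way, or is of lower order and directly absorbed) yields the operators $\CA(\lambda),\CB(\lambda),\CC(\lambda)$ and the $\CR$-bounds \eqref{rbdd general} for $n=0,1$. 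For uniqueness in $\thetag1$: a solution of \eqref{main prob} with zero data is, by the a priori estimate obtained from the $\CR$-bound (in particular uniform boundedness, cf.\ Remark \ref{rem:def of rbdd}) together with the same localization applied to the homogeneous problem, identically zero for $\lambda_0$ large; alternatively one invokes the known uniqueness for the model problems and the partition of unity.

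\textbf{Main obstacle.} The hard part will be Step 2: verifying that the remainder operator $\mathcal V(\lambda)$ genuinely has small $\CR$-bound. This requires (i) carefully organizing every commutator term so that it is expressed through quantities the model solution operators are known to control — i.e.\ through $\CR_\lambda\rho_j$, $\CS_\lambda\bu_j$, $\CT_\lambda h_j$ and lower-order pieces — and (ii) extracting from each such piece a genuine negative power of $\lambda^{1/2}$, which is delicate for the third-order term $\gamma_1\Delta\rho\,\bI$ in the stress and for the surface-tension term $\sigma\Delta_\Gamma h$, since there the "extra" regularity needed for the commutator has to be borrowed from the $\lambda^{1/2}$-weighted norms rather than from pure spatial smoothness. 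Managing the boundary terms on a bent half-space, where $\bn$ and $\Delta_\Gamma$ themselves carry $h_j$-dependence of size $K$, while keeping all constants dependent only on $N,q,L,\gamma_*,\gamma^*$ (and the fixed geometric constants $a_1,a_2,K$), is where the bulk of the technical work lies and is exactly what Lemma \ref{lem:Rbound V g} is designed to encapsulate.
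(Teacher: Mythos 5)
Your overall strategy --- localize via the uniform-$C^3$ covering, solve model problems in $\R^N$ and in bent half-spaces, assemble a parametrix from a partition of unity, and absorb the commutator remainder by a Neumann series for large $\lambda$ --- is exactly the paper's route to Theorem \ref{thm:main general}, and your heuristic that each commutator loses one derivative and hence gains a factor $\lambda^{-1/2}$ is precisely what Lemma \ref{lem:Rbound V g} delivers. However, there is a genuine gap between what this argument produces and what Theorem \ref{thm:main Rbound'} asserts. The localization machinery necessarily works with the augmented data map $\CF^2_\lambda\bF=(\nabla d,\lambda^{1/2}d,\bff,\dots)$ and the augmented output $\CR^0_\lambda\rho=(\nabla^3\rho,\lambda^{1/2}\nabla^2\rho,\lambda\nabla\rho,\lambda^{3/2}\rho)$: the remainder terms $\CU_1,\dots,\CU_4$ contain pieces such as $\lambda^{1/2}\bigl(\xi^i_j\gamma^i_{1j}\dv\bu^i_j-\gamma_1\dv(\xi^i_j\bu^i_j)\bigr)$ and $\lambda\,\CD_{9j}(\rho^2_j)$, whose absorption requires control of $\lambda\nabla\rho^i_j$ and $\lambda^{3/2}\rho^i_j$, and the model solution operators provide that control only when $\lambda^{1/2}d$ is part of the data. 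But the theorem's data map $\CF_\lambda\bF$ in \eqref{def:space} contains $d$ only through $d\in H^1_q(\Omega)$, with no $\lambda^{1/2}d$ entry (your Step 1 misattributes the augmented tuple to \eqref{def:space}). Your construction therefore stops at the augmented version and never descends to the stated data space. The paper does this in Section \ref{sec:general2}: one first solves the whole-space problem \eqref{extend RN} with extended data $(Ed,E_0\bff)$ to produce $R=\Phi(\lambda)(Ed,E_0\bff)$, writes $\rho=R+\theta$, and observes that the residual problem \eqref{main prob'} for $(\theta,\bu,h)$ has right-hand side $\widetilde d=\gamma_1\dv\bU$ whose $\lambda^{1/2}$-weighted contributions are controlled by $(d,\bff)$ alone; only then is Theorem \ref{thm:main general} applicable. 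Without this step the operators you build are not defined on $\CX_q(\Omega)$.

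Two smaller points. First, the passage from the $H^3_q(\Omega)$-valued operator $\CC$ of Theorem \ref{thm:main Rbound} to the $W^{3-1/q}_q(\Gamma)$-valued operator of Theorem \ref{thm:main Rbound'} is carried out with the trace and extension operators $\CT$ and $\CE$; your proposal targets $W^{3-1/q}_q(\Gamma)$ directly without explaining how $\zeta\in W^{2-1/q}_q(\Gamma)$ is fed into the interior constructions, which all live on domains. Second, your uniqueness argument is not the paper's and is not obviously repairable as stated: localizing a homogeneous solution produces model problems with nonzero commutator data, so ``uniqueness for the model problems plus the partition of unity'' does not conclude anything by itself. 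The paper instead proves uniqueness in the general domain by a duality argument, pairing the homogeneous solution with a solution of the adjoint-type problem \eqref{dual} and integrating by parts via \eqref{div}.
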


Since there exist linear mappings 
\[
	\CT : H^n_q(\Omega) \to W^{n-1/q}_q(\Gamma),
\quad
	\CE : W^{n-1/q}_q(\Gamma) \to H^n_q(\Omega)
\]
such that $\|\CT f\|_{W^{n-1/q}_q(\Gamma)} \le C\|f\|_{W^n_q(\Omega)}$
and $\|\CE g\|_{W^n_q(\Omega)}\le C\|g\|_{W^{n-1/q}_q(\Gamma)}$ for $n=2, 3$ (cf. \cite[Proposition 9.5.4]{S2016}),
Theorem \ref{thm:main Rbound'} follows from the following theorem. 

\begin{thm}\label{thm:main Rbound}
Let $1<q<\infty$, and let $\Omega$ be a uniform $C^3$-domain. 
Assume that $\gamma_j$ $(j =1, 2, 3, 4)$ satisfies Assumption \ref{assumption gamma}.
Then there exists a constant $\lambda_0 \ge 1$ such that the following assertions hold true:

$\thetag1$ 
For any $\lambda \in \C_{+, \lambda_0}$ there exist operators  
\begin{align*}
&\CA (\lambda) \in 
{\rm Hol} (\C_{+, \lambda_0}, 
\CL(\CX^0_q(\Omega), H^3_q(\Omega)))\\
&\CB (\lambda) \in 
{\rm Hol} (\C_{+, \lambda_0}, 
\CL(\CX^0_q(\Omega), H^2_q(\Omega)^N)),\\
&\CC (\lambda) \in 
{\rm Hol} (\C_{+, \lambda_0}, 
\CL(\CX^0_q(\Omega), H^3_q(\Omega)))
\end{align*}
such that 
for any $\bF=(d, \bff, \bg, k, \zeta) \in X_q^0(\Omega)$, 
\begin{equation*}
\rho = \CA (\lambda) \CF_\lambda \bF, \quad
\bu = \CB (\lambda) \CF_\lambda \bF, \quad
h = \CC (\lambda) \CF_\lambda \bF
\end{equation*}
are unique solutions of problem \eqref{main prob}.

$\thetag2$
There exists a positive constant $r$ such that
\begin{equation} \label{rbdd general2}
\begin{aligned}
&\CR_{\CL(\CX^0_q(\Omega), \fA_q (\Omega))}
(\{(\tau \pd_\tau)^n \CR_\lambda \CA (\lambda) \mid 
\lambda \in \C_{+, \lambda_0}\}) 
\leq r,\\
&\CR_{\CL(\CX^0_q(\Omega), \fB_q(\Omega))}
(\{(\tau \pd_\tau)^n \CS_\lambda \CB (\lambda) \mid 
\lambda \in \C_{+, \lambda_0}\}) 
\leq r,\\
&\CR_{\CL(\CX^0_q(\Omega), \fC_q(\Omega))}
(\{(\tau \pd_\tau)^n \CT_\lambda \CC (\lambda) \mid 
\lambda \in \C_{+, \lambda_0}\}) 
\leq r
\end{aligned}
\end{equation}
for $n = 0, 1$.
Here, above constant $r$ depend solely on $N$, $q$, $L$, $\gamma_*$ and $\gamma^*$. 
\end{thm}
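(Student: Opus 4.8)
The plan is to prove Theorem~\ref{thm:main Rbound} by the standard localization machinery for resolvent problems in uniform $C^3$-domains, building on the model-problem results recalled in Section~\ref{sec.w} (whole space and half space) and established in Section~\ref{sec.bent} (bent half space). First I would fix a partition of unity $\{\zeta_i^0,\zeta_i^1\}$ subordinate to a covering of $\overline\Omega$ by balls $B_i$: interior balls $B_i^0\subset\Omega$ on which the problem looks like the whole-space problem, and boundary balls $B_i^1$ centered at points of $\Gamma$ on which, after flattening via the local $C^3$ graph function $h$ from Definition~\ref{def:general domain}, the problem becomes a bent-half-space problem with coefficients frozen up to a small Lipschitz perturbation (controlled by $K$, $a_1$, $a_2$, and $L$ from Assumption~\ref{assumption gamma}). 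Because $\Omega$ is uniform, one can arrange the covering to have finite multiplicity and uniform radii, so that the infinite sums over $i$ are handled by the standard $\ell_q$-summability argument together with the $\CR$-boundedness of the model solution operators. The operators $\CA(\lambda),\CB(\lambda),\CC(\lambda)$ are then defined as the superposition $\sum_i \zeta_i^\bullet \,(\text{model solution operator applied to }\zeta_i^\bullet\text{-localized data})$ composed with appropriate extension/restriction maps.

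The key steps, in order: (i) localize the data $\bF$, apply the whole-space operators of Section~\ref{sec.w} on interior patches and the bent-half-space operators of Section~\ref{sec.bent} on boundary patches, glue with the partition of unity to obtain a first approximate solution operator, and identify the commutator error produced when the differential operators and the boundary operator hit the cutoffs $\zeta_i^\bullet$; (ii) collect all error terms into a single remainder operator $\mathcal V(\lambda)$ acting on the data and show, as in Lemma~\ref{lem:Rbound V g}, that its $\CR$-norm is bounded by a constant times $\lambda_0^{-\epsilon}$ plus a term linear in the size of the Lipschitz/graph data, hence $<1/2$ once $\lambda_0$ is taken large and the patch radii small; (iii) invert $I+\mathcal V(\lambda)$ by Neumann series — legitimate since $\CR$-boundedness (indeed uniform boundedness, cf.\ Remark~\ref{rem:def of rbdd}) is stable under composition and the geometric series converges in $\CR$-norm — to obtain the exact solution operators; (iv) read off holomorphy in $\lambda$ from holomorphy of the model operators and of $(I+\mathcal V(\lambda))^{-1}$; (v) establish uniqueness by a duality/adjoint argument or by the a~priori estimate that the same construction yields for the homogeneous problem. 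Throughout, the role of the extra factor $\lambda^{1/2}d$ built into $\CF_\lambda$ (and of $\lambda^{3/2}\rho$ mentioned in the introduction) is to make the first equation $\lambda\rho+\gamma_1\dv\bu=d$ contribute the $\mathcal R_\lambda\rho=(\nabla^3\rho,\lambda^{1/2}\nabla^2\rho,\lambda\rho)$ estimates needed to absorb the commutator terms $[\dv,\zeta_i^\bullet]\bu$ and $[\Delta,\zeta_i^\bullet]\rho$ on the right-hand side with a genuinely small constant.

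The main obstacle I expect is step (ii): showing the remainder $\mathcal V(\lambda)$ has small $\CR$-bound. The commutators arising from $\DV\{\gamma_2\bD(\bu)+\cdots+\gamma_1\Delta\rho\,\bI\}$ hitting the cutoffs produce lower-order terms in $\bu$ and, crucially, up to second-order terms in $\rho$ multiplied by first derivatives of $\zeta_i^\bullet$; these must be re-expressed as data of the model problems and estimated with a constant that is small uniformly in $i$. Smallness comes from two independent mechanisms that must be combined carefully: decay in $\lambda$ (gaining negative powers of $|\lambda|$, hence of $\lambda_0$, on lower-order terms, using the full scale $\lambda\rho,\lambda^{1/2}\nabla^2\rho,\lambda^{1/2}\nabla\bu,\lambda\bu$ controlled by the model operators) and smallness of the coefficient perturbation and of the graph function $h$ on each patch (shrinking $a_1,a_2$). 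The surface-tension term $-\sigma\Delta_\Gamma h\,\bn$ in the third boundary equation, coupled with the kinematic equation $\lambda h-\bu\cdot\bn=\zeta$, adds genuine difficulty because $h$ lives only on $\Gamma$ and its commutators with the localization involve the (curved) Laplace--Beltrami operator and the normal field $\bn$, which differ from their flattened counterparts by terms controlled by $\nabla^2 h$; one must track these against the $\fC_q(\Omega)$-norm $\CT_\lambda h=(h,\lambda h)$ and verify the bent-half-space estimates of Section~\ref{sec.bent} deliver enough regularity ($H^3_q$ for $h$) to close the loop. Once Lemma~\ref{lem:Rbound V g} is in hand, the remainder of the argument is the routine Neumann-series/holomorphy/uniqueness package described above.
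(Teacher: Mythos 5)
Your outline accurately reproduces the localization machinery of Section~\ref{sec.general1}: partition of unity with finite multiplicity, whole-space operators on interior patches, bent-half-space operators on boundary patches, a remainder operator with $\CR$-bound of order $\lambda_0^{-1/2}$ plus a term controlled by the Lipschitz/graph smallness, Neumann-series inversion, and uniqueness by the duality argument. That is indeed how the paper proceeds. But what this produces is Theorem~\ref{thm:main general}, not Theorem~\ref{thm:main Rbound}, and the difference is exactly the point you have glossed over: the localization yields operators that are $\CR$-bounded from $\CX^2_q(\Omega)$, whose data functional $\CF^2_\lambda\bF$ contains the components $\nabla d$ and $\lambda^{1/2}d$, whereas the theorem you are asked to prove requires $\CR$-boundedness from $\CX^0_q(\Omega)$ with data $\CF_\lambda\bF$, in which $d$ enters only through its unweighted $H^1_q$-norm. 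You in fact write that ``the extra factor $\lambda^{1/2}d$ [is] built into $\CF_\lambda$''---it is not; it is built into $\CF^2_\lambda$ only. The localization cannot avoid this stronger data norm, because (as you correctly observe) the smallness of the remainder in Lemma~\ref{lem:Rbound V g} hinges on controlling $\lambda^{3/2}\rho$, and from the continuity equation $\lambda^{3/2}\rho=\lambda^{1/2}d-\gamma_1\lambda^{1/2}\dv\bu$ this control is only available when $\lambda^{1/2}d$ is part of the data. So your construction, as described, proves a weaker statement than the one claimed.

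The missing step is the content of Section~\ref{sec:general2}: one first solves the auxiliary whole-space problem \eqref{extend RN} with extended Lipschitz coefficients $\tgamma_j$ and extended data $(Ed,E_0\bff)$, obtaining $(R,\bU)=(\Phi(\lambda)(Ed,E_0\bff),\Psi(\lambda)(Ed,E_0\bff))$ via Proposition~\ref{prop:extend RN}, whose $\CR$-bounds require only $\|d\|_{H^1_q}$ and $\|\bff\|_{L_q}$. Writing $\rho=R+\theta$ reduces \eqref{main prob} to a problem for $(\theta,\bu,h)$ whose new right-hand side $\widetilde d=\gamma_1\dv\bU$ (together with $\widetilde\bff$, $\widetilde\bg$, $\widetilde k$) has the property that $\nabla\widetilde d$ and $\lambda^{1/2}\widetilde d$ are themselves outputs of the $\CR$-bounded families $\CS_\lambda\Psi(\lambda)$; only then can Theorem~\ref{thm:main general} be applied and composed to yield operators $\CR$-bounded on $\CX^0_q(\Omega)$. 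Without this lifting, the passage from the $\CX^2_q$-version to the $\CX^0_q$-version---which is the whole reason the paper separates Theorem~\ref{thm:main Rbound} from Theorem~\ref{thm:main general}---is absent from your argument.
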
 
The method to prove the existence of $\CR$-bounded solution operator families stated in Theorem \ref{thm:main Rbound'} by Theorem \ref{thm:main Rbound} is the same as the proof of \cite[Theorem 1]{MM} and the uniqueness of the solution to \eqref{main prob} in Theorem \ref{thm:main Rbound'} follows from Theorem \ref{thm:main Rbound}, thus we omit the proof of Theorem \ref{thm:main Rbound'}.

Thanks to Theorem \ref{thm:main Rbound'}, we can obtain the results for the generation of analytic semigroup and the maximal $L_p$-$L_q$ maximal regularity.
First, we state the generation of analytic semigroup.
Let
\[
	\fX_q(\Omega) = H^1_q(\Omega) \times L_q(\Omega)^N \times W^{2-1/q}_q(\Gamma)
\]
with the norm
\[
	\|(\rho, \bu, h)\|_{\fX_q(\Omega)}=\|\rho\|_{H^1_q(\Omega)} + \|\bu\|_{L_q(\Omega)} + \|h\|_{W^{2-1/q}_q(\Gamma)}
\]
and let $\CA$, $\CD(\CA)$, and $\|\cdot\|_{\CD(\CA)}$ be an operator, its domain, and the norm with
\begin{align*}
	\CD_q(\CA)&=\{(\rho, \bu, h) \in H^3_q(\Omega) \times H^2_q(\Omega)^N \times W^{3-1/q}_q(\Gamma) \\
	&\qquad \mid \{\gamma_2 \bD(\bu) + (\gamma_3 - \gamma_2) \dv \bu \bI + \gamma_1  \Delta \rho \bI \} \bn - \sigma \Delta_{\Gamma} h \bn=0, \enskip
	\bn \cdot \nabla \rho = 0 \enskip \text{on $\Gamma$} \},\\
	\|(\rho, \bu, h)\|_{\CD_q(\CA)}&=\|\rho\|_{H^3_q(\Omega)} + \|\bu\|_{H^2_q(\Omega)} + \|h\|_{W^{3-1/q}_q(\Gamma)},\\
	\CA(\rho, \bu, h)&=(-\gamma_1 \dv \bu, \enskip \gamma_4^{-1} \DV\{\gamma_2 \bD(\bu) + (\gamma_3 - \gamma_2) \dv \bu \bI + \gamma_1  \Delta \rho \bI\}, \enskip
	\bu \cdot \bn) \enskip \text{for $(\rho, \bu, h) \in \CD_q(\CA)$}. 
\end{align*}
Theorem \ref{thm:main Rbound'}, together with Remark \ref{rem:def of rbdd}, implies that there exist positive constant 
$\widetilde \lambda_0 > \lambda_0$ and $\epsilon_0 \in (0, \pi/2)$ such that for any $\lambda \in \Sigma_{\epsilon, \widetilde \lambda_0}$ ($\epsilon \in (\epsilon_0, \pi/2)$)
and $(d, \bff, 0, 0, \zeta) \in X_q(\Omega)$, \eqref{main prob} with $(\bg, k)=(0, 0)$ has a unique solution $ (\rho, \bu, h) \in \CD_q(\CA)$ satisfying
\begin{equation}\label{resolvent est}
	|\lambda|\| (\rho, \bu, h) \|_{\fX_q(\Omega)} + \| (\rho, \bu, h) \|_{\CD_q(\CA)} \le C\|(d, \bff, \zeta)\|_{\fX_q(\Omega)} 
\end{equation}
with some positive constant $C$ depending on $N, q, \widetilde \lambda_0$ (cf. \cite[Proposition 2.1.11]{L}).
Here
\[
	\Sigma_{\epsilon, \widetilde \lambda_0} = \{\widetilde \lambda_0 + z \mid z \in \Sigma_\epsilon\},
	\quad
	\Sigma_{\epsilon} = \{z \in \C \setminus\{0\} \mid |\arg z| < \pi - \epsilon\}.
\]
Then the following theorem follows from \eqref{resolvent est} and the standard arguments for the analytic semigroup.

\begin{thm}\label{thm:semi1}
Let $1 < q < \infty$.
Assume that Assumption \ref{assumption gamma} holds.
Then the operator $\CA$ generates an analytic
semigroup $\{T(t)\}_{t\geq 0}$ on $\fX_q(\Omega)$.  
Moreover, there exist
constants $\eta_0 \ge 1$ and $C_{q, N, \eta_0} > 0$
such that $\{T(t)\}_{t\geq 0}$ satisfies the estimates: 
\begin{align*}
	\|T(t) (\rho_0, \bu_0, h_0) \|_{\fX_q(\Omega)}
	&\leq C_{q, N, \eta_0} e^{\eta_0 t} \|(\rho_0, \bu_0, h_0)\|_{\fX_q(\Omega)},\\
	\|\pd_t T(t) (\rho_0, \bu_0, h_0) \|_{\fX_q(\Omega)}
	&\leq C_{q, N, \eta_0} e^{\eta_0 t} t^{-1} \|(\rho_0, \bu_0, h_0)\|_{\fX_q(\Omega)},\\
	\|\pd_t T(t) (\rho_0, \bu_0, h_0) \|_{\fX_q(\Omega)}
	&\leq C_{q, N, \eta_0} e^{\eta_0 t} \|(\rho_0, \bu_0, h_0)\|_{\CD_q(\CA)}
\end{align*}
for any $t > 0$.
\end{thm}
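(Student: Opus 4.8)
The plan is to deduce Theorem \ref{thm:semi1} from the resolvent estimate \eqref{resolvent est} by the classical Hille--Yosida/sectorial generation theorem, so the work is almost entirely bookkeeping. First I would observe that \eqref{resolvent est} says precisely that for $\lambda$ in the shifted sector $\Sigma_{\epsilon, \widetilde\lambda_0}$ the operator $\lambda I - \CA$ is bijective from $\CD_q(\CA)$ onto $\fX_q(\Omega)$ with $\|(\lambda I - \CA)^{-1}\|_{\CL(\fX_q(\Omega))} \le C/|\lambda|$; after the translation $\widetilde\CA = \CA - \widetilde\lambda_0 I$ this is exactly the resolvent bound $\|(\lambda I - \widetilde\CA)^{-1}\| \le C/|\lambda|$ on a sector $\Sigma_\epsilon$ with half-angle $\pi - \epsilon > \pi/2$. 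One then needs $\CA$ (equivalently $\widetilde\CA$) to be closed and densely defined on $\fX_q(\Omega)$: closedness follows because $(\widetilde\lambda_0 I - \CA)^{-1} \in \CL(\fX_q(\Omega), \CD_q(\CA))$ is bounded and $\CD_q(\CA)$ is complete under $\|\cdot\|_{\CD_q(\CA)}$, and density holds because $\CD_q(\CA)$ contains, e.g., the smooth compactly supported fields satisfying the (homogeneous) boundary conditions, which are dense in $\fX_q(\Omega) = H^1_q \times L_q^N \times W^{2-1/q}_q$. Invoking the standard sectorial generation theorem (e.g. \cite[Proposition 2.1.11]{L} or \cite[Theorem 2.5.2, Corollary 3.7.17]{ABHN}) then gives that $\widetilde\CA$, hence $\CA$, generates an analytic semigroup $\{T(t)\}_{t\ge0}$ on $\fX_q(\Omega)$.

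Next I would extract the quantitative bounds. The analytic semigroup generated by a sectorial operator admits the Dunford/Cauchy integral representation
\[
	T(t) = e^{\widetilde\lambda_0 t}\, \frac{1}{2\pi i}\int_{\partial\Sigma_{\epsilon'}} e^{zt}(zI - \widetilde\CA)^{-1}\,dz
\]
for a suitable intermediate angle $\epsilon' \in (\epsilon_0, \pi/2)$ and a contour $\partial\Sigma_{\epsilon'}$ oriented so that $\Re z$ increases upward; differentiating under the integral gives $\partial_t T(t) = e^{\widetilde\lambda_0 t}\,(2\pi i)^{-1}\int_{\partial\Sigma_{\epsilon'}} z\,e^{zt}(zI-\widetilde\CA)^{-1}\,dz$ plus a lower-order term from differentiating $e^{\widetilde\lambda_0 t}$. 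Plugging in $\|(zI-\widetilde\CA)^{-1}\|_{\CL(\fX_q)} \le C/|z|$ and estimating the contour integral in the usual way (split into the two rays and the arc, scale $z = s/t$) yields $\|T(t)\|_{\CL(\fX_q)} \le C e^{\eta_0 t}$ and $\|\partial_t T(t)\|_{\CL(\fX_q)} \le C e^{\eta_0 t} t^{-1}$ with $\eta_0 = \widetilde\lambda_0$ (or any number $\ge \widetilde\lambda_0$), which are the first two asserted estimates. For the third, one uses that on the domain $\partial_t T(t) = \CA T(t) = T(t)\CA$ on $\CD_q(\CA)$ so that $\|\partial_t T(t)(\rho_0,\bu_0,h_0)\|_{\fX_q} = \|T(t)\CA(\rho_0,\bu_0,h_0)\|_{\fX_q} \le Ce^{\eta_0 t}\|\CA(\rho_0,\bu_0,h_0)\|_{\fX_q} \le C e^{\eta_0 t}\|(\rho_0,\bu_0,h_0)\|_{\CD_q(\CA)}$, the last inequality because $\CA:\CD_q(\CA)\to\fX_q(\Omega)$ is bounded by inspection of its definition.

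I do not expect any genuine obstacle here; the substantive analytic content is already contained in Theorem \ref{thm:main Rbound'} and its consequence \eqref{resolvent est}. The only points that require a word of care are (i) checking density of $\CD_q(\CA)$ in $\fX_q(\Omega)$ — one should be mildly careful that the $W^{2-1/q}_q(\Gamma)$-component of elements of $\CD_q(\CA)$ (which live in $W^{3-1/q}_q(\Gamma)$) is dense in $W^{2-1/q}_q(\Gamma)$, and that the boundary conditions defining $\CD_q(\CA)$ do not destroy density, which is standard since they are conditions on the traces of $\bu$ and $\rho$ that can be satisfied by cutting off near $\Gamma$; and (ii) keeping track of the spectral shift $\widetilde\lambda_0$ so that the sector on which the resolvent is controlled genuinely contains a half-plane $\Re\lambda > \eta_0$, which it does since $\epsilon_0 < \pi/2$. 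With these remarks in place the theorem follows from the cited standard results on analytic semigroups, so I would simply state it as such and refer to \cite{L} (and, if needed, \cite{ABHN} or \cite[Chapter 1]{Pazy}) for the details.
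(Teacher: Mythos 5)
Your proposal is correct and follows essentially the same route as the paper, which derives Theorem \ref{thm:semi1} from the resolvent estimate \eqref{resolvent est} (itself a consequence of Theorem \ref{thm:main Rbound'} via \cite[Proposition 2.1.11]{L}) and then simply invokes ``the standard arguments for the analytic semigroup''; you have merely written out those standard arguments (generation, contour-integral bounds, and $\partial_t T(t)=T(t)\CA$ on the domain) explicitly. The only cosmetic issue is that you cite references not in the paper's bibliography, but mathematically nothing is missing.
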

Let 
\[
	\CD_{q, p}(\R^N_+) = (\fX_q(\Omega), \CD_q(\CA))_{1-1/p, p}
\]
with $\|(\rho, \bu, h)\|_{\CD_{q, p} (\Omega)}=\|\rho\|_{B^{3-2/p}_{q, p}(\Omega)}+\|\bu\|_{B^{2(1-1/p)}_{q, p}(\Omega)}+\|h\|_{B^{3-1/q-1/p}_{q, p}(\Gamma)}$.
Combining Theorem \ref{thm:semi1} with a real interpolation method
(cf. Shibata and Shimizu \cite[Proof of Theorem 3.9]{SS}), we have
the following theorem. 

\begin{thm}\label{thm:semi2}
Let $1 < p, q < \infty$, and let $\eta_0$ be a positive constant stated in Theorem \ref{thm:semi1}. 
Assume that Assumption \ref{assumption gamma} holds.
Then for any $(\rho_0, \bu_0, h_0) \in \CD_{q, p} (\Omega)$, 
\eqref{linear} with $(d, \bff, \bg, k, \zeta) = (0, 0, 0, 0, 0)$
admits a unique solution $(\rho, \bu, h) = T(t) (\rho_0, \bu_0, h_0)$
possessing the estimate:
\begin{equation}\label{semi2}
\begin{aligned}
	&\|e^{-\eta t} \pd_t (\rho, \bu, h)\|_{L_p(\R_+, \fX_q(\Omega))}
	+ \|e^{-\eta t} (\rho, \bu, h)\|_{L_p(\R_+, \CD_q(\CA))}\\
	&\le C_{p, q, N, \eta_0} 
	\|(\rho_0, \bu_0, h_0)\|_{\CD_{q, p} (\Omega)}
\end{aligned}
\end{equation}
for any $\eta \ge \eta_0$. 
\end{thm}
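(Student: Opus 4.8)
\textbf{Proof proposal for Theorem~\ref{thm:semi2}.}
The plan is to deduce the maximal $L_p$-in-time regularity for the homogeneous problem \eqref{linear} from the analyticity of the semigroup $\{T(t)\}_{t\ge 0}$ established in Theorem~\ref{thm:semi1} together with the characterization of the trace space of the maximal-regularity class as a real interpolation space between the base space $\fX_q(\Omega)$ and the domain $\CD_q(\CA)$. Concretely, a function $u(t)=T(t)u_0$ with $u_0=(\rho_0,\bu_0,h_0)$ solves $\pd_t u = \CA u$, $u|_{t=0}=u_0$, which is exactly \eqref{linear} with vanishing right members, and the abstract theory of analytic semigroups (see Shibata--Shimizu \cite[Proof of Theorem~3.9]{SS} and Lunardi \cite{L}) tells us that $t\mapsto e^{-\eta t}\pd_t u(t)\in L_p(\R_+,\fX_q(\Omega))$ and $t\mapsto e^{-\eta t}u(t)\in L_p(\R_+,\CD_q(\CA))$ precisely when $u_0$ lies in $(\fX_q(\Omega),\CD_q(\CA))_{1-1/p,p}=\CD_{q,p}(\Omega)$, with the norm equivalence \eqref{semi2}.

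First I would fix $\eta\ge\eta_0$ and replace $T(t)$ by the rescaled semigroup $e^{-\eta t}T(t)$, whose generator $\CA-\eta$ has spectrum contained in a left half-plane; by Theorem~\ref{thm:semi1} this is a bounded analytic semigroup on $\fX_q(\Omega)$, so the estimates $\|e^{-\eta t}T(t)u_0\|_{\fX_q}+t\|e^{-\eta t}\pd_t T(t)u_0\|_{\fX_q}\le C\|u_0\|_{\fX_q}$ and $\|(\CA-\eta)e^{-\eta t}T(t)u_0\|_{\fX_q}\le C t^{-1}\|u_0\|_{\fX_q}$ hold. Next I would invoke the standard real-interpolation characterization: for a bounded analytic semigroup with generator $B$ on a Banach space $Z$, one has $t^{1/p'}\|Be^{tB}u_0\|_Z\in L_p(\R_+,dt/t)$ if and only if $u_0\in(Z,\CD(B))_{1-1/p,p}$, with equivalent norms; applying this with $Z=\fX_q(\Omega)$ and $B=\CA-\eta$, and using $\CD(\CA-\eta)=\CD_q(\CA)$ with equivalent graph norms (since $\eta$ is a bounded perturbation in the resolvent sense), gives $\|e^{-\eta t}\pd_t u\|_{L_p(\R_+,\fX_q(\Omega))}\le C\|u_0\|_{\CD_{q,p}(\Omega)}$. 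The companion bound $\|e^{-\eta t}u\|_{L_p(\R_+,\CD_q(\CA))}\le C\|u_0\|_{\CD_{q,p}(\Omega)}$ follows from the equation $\CA u=\pd_t u$ together with $\|e^{-\eta t}u\|_{L_p(\R_+,\fX_q(\Omega))}\le C\|u_0\|_{\fX_q(\Omega)}\le C\|u_0\|_{\CD_{q,p}(\Omega)}$. Finally I would identify the interpolation space $\CD_{q,p}(\Omega)=(\fX_q(\Omega),\CD_q(\CA))_{1-1/p,p}$ with the Besov-regularity space in the stated norm: the density component interpolates between $H^1_q$ and $H^3_q$ to give $B^{3-2/p}_{q,p}$, the velocity between $L_q$ and $H^2_q$ to give $B^{2(1-1/p)}_{q,p}$, and the height between $W^{2-1/q}_q$ and $W^{3-1/q}_q$ on $\Gamma$ to give $B^{3-1/q-1/p}_{q,p}$, using standard interpolation of Sobolev and Besov scales; the boundary constraints defining $\CD_q(\CA)$ do not affect the interpolation identity for the relevant range of $p$ because the compatibility conditions have the correct order. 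Uniqueness of the solution is immediate from uniqueness for the abstract Cauchy problem associated with the generator $\CA$.

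The main obstacle I anticipate is the clean identification of $(\fX_q(\Omega),\CD_q(\CA))_{1-1/p,p}$ with the product of Besov spaces in the presence of the boundary conditions encoded in $\CD_q(\CA)$: one must check that the nonhomogeneous boundary conditions $\{\gamma_2\bD(\bu)+(\gamma_3-\gamma_2)\dv\bu\,\bI+\gamma_1\Delta\rho\,\bI\}\bn-\sigma\Delta_\Gamma h\,\bn=0$ and $\bn\cdot\nabla\rho=0$ on $\Gamma$ are ``transparent'' to the real interpolation functor, i.e. that the interpolated domain is still characterized purely by the interior Besov regularity (the constraints only involve traces of strictly lower order than $3-2/p$ respectively $3-1/q-1/p$ for $p$ finite, so no extra compatibility survives in the trace space). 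This is exactly the technical point handled in \cite[Proof of Theorem~3.9]{SS} for the Navier--Stokes free boundary setting, and I would follow that argument essentially verbatim, replacing the relevant operator there by the present $\CA$; since the orders of the differential operators and of the boundary operators coincide with the Navier--Stokes--Korteweg scaling used throughout this paper, no new difficulty arises beyond bookkeeping. Everything else — the rescaling by $e^{-\eta t}$, the analytic-semigroup estimates, and the passage from the trace-space characterization to \eqref{semi2} — is routine abstract semigroup theory.
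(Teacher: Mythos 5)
Your proposal is correct and follows essentially the same route as the paper, which proves Theorem~\ref{thm:semi2} in one line by combining the analytic semigroup of Theorem~\ref{thm:semi1} with the real-interpolation (trace-space) characterization of $\CD_{q,p}(\Omega)=(\fX_q(\Omega),\CD_q(\CA))_{1-1/p,p}$, citing Shibata--Shimizu \cite[Proof of Theorem 3.9]{SS}. The only slip is the exponent in the interpolation criterion: for $u_0\in(Z,\CD(B))_{1-1/p,p}$ the correct condition is $t^{1/p}\|Be^{tB}u_0\|_Z\in L_p(\R_+,dt/t)$ (equivalently $Be^{tB}u_0\in L_p(\R_+,dt;Z)$), not $t^{1/p'}$, but the conclusion you draw from it is the right one.
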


Next, we state the maximal $L_p$-$L_q$ regularity.
Let
\begin{align*}
	\CF_{p, q, \eta}
	=\{(d, \bff, \bg, k, \zeta) \mid &d \in L_{p, \eta, 0}(\R, H^1_q(\Omega)), \enskip
	\bff \in L_{p, \eta, 0}(\R, L_q(\Omega)^N),\\
	&\bg \in L_{p, \eta, 0}(\R, H^1_q(\Omega)^N) \cap H^{1/2}_{p, \eta, 0}(\R, L_q(\Omega)^N), \\
	&k \in H^1_{p, \eta, 0}(\R, L_q(\Omega)) \cap L_{p, \eta, 0}(\R, H^2_q(\Omega)), \\
	&\zeta \in L_{p, \eta, 0}(\R, W^{2-1/q}_q(\Gamma))\}
\end{align*}
with the norm
\begin{align*}
	&\|(d, \bff, \bg, k, \zeta)\|_{\CF_{p, q, \eta}}\\
	&\enskip=\|e^{-\eta t} d \|_{L_p(\R, H^1_q(\Omega))} 
	+ \|e^{-\eta t} (\bff, \Lambda^{1/2}_\eta \bg) \|_{L_p(\R, L_q(\Omega))}
	+ \|e^{-\eta t} \bg \|_{L_p(\R, H^1_q(\Omega))}\\
	&\enskip+ \|e^{-\eta t} (\pd_t k, \Lambda^{1/2}\nabla k)\|_{L_p(\R, L_q(\Omega))}
	+ \|e^{-\eta t} k \|_{L_p(\R, H^2_q(\Omega))}
	+ \|e^{-\eta t} \zeta \|_{L_p(\R, W^{2-1/q}_q(\Gamma))}.
\end{align*}
In the same manner as in \cite[Theorem 9]{MM}, Theorem \ref{thm:main Rbound'}, together with the Weis's
operator valued Fourier multiplier theorem \cite{W}, furnishes the following theorem.

\begin{thm}\label{thm:mr} 
Let $1 < p, q < \infty$.
Assume that Assumption \ref{assumption gamma}
hold. 
Then
there exists a constant $\eta_1 \ge 1$ such that for any $(d, \bff, \bg, k, \zeta) \in \CF_{p, q, \eta}$,
\eqref{linear} with $(\rho_0, \bu_0, h_0)=(0, 0, 0)$ admits a unique solution 
$(\rho, \bu, h)$ with 
\begin{align*}
	&\rho \in L_{p, \eta_1, 0}(\R, H^3_q(\Omega)) \cap H^1_{p, \eta_1, 0}(\R, H^1_q(\Omega)), \\
	&\bu \in L_{p, \eta_1, 0}(\R, H^2_q(\Omega)^N) \cap H^1_{p, \eta_1, 0}(\R, L_q(\Omega)^N),\\
	&h \in L_{p, \eta_1, 0}(\R, W^{3-1/q}_q(\Gamma)) \cap H^1_{p, \eta_1, 0}(\R, W^{2-1/q}_q(\Gamma)),
\end{align*}
possessing the estimate
\begin{equation*}\label{est:mr}
\begin{aligned}
	&\|e^{-\eta t} \pd_t \rho\|_{L_p(\R, H^1_q(\Omega))} 
	+ \sum_{j=0}^3 \|e^{-\eta t} \Lambda^{j/2} \rho\|_{L_p(\R, H^{3-j}_q(\Omega))}\\
	&+ \|e^{-\eta t} \pd_t \bu\|_{L_p(\R, L_q(\Omega))}
	+ \sum_{j=0}^2 \|e^{-\eta t} \Lambda^{j/2} \bu\|_{L_p(\R, H^{2-j}_q(\Omega))}\\
	&+ \|e^{-\eta t} \pd_t h\|_{L_p(\R, W^{2-1/q}_q(\Gamma))}
	+ \|e^{-\eta t} h\|_{L_p(\R, W^{3-1/q}_q(\Gamma))}\\
	&\leq C\|(d, \bff, \bg, k, \zeta)\|_{\CF_{p, q, \eta}}
\end{aligned}
\end{equation*}
for any $\eta \ge \eta_1$ with some constant $C$ depending 
on $N$, $p$, $q$, and $\eta_1$. 
\end{thm}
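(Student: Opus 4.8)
The plan is to deduce Theorem~\ref{thm:mr} from the $\CR$-boundedness result Theorem~\ref{thm:main Rbound'} by invoking the Weis operator-valued Fourier multiplier theorem \cite{W}, following the scheme already carried out in \cite[Theorem~9]{MM} for the half-space. First I would apply the Laplace transform in time to \eqref{linear} with zero initial data; since all data vanish for $t<0$ and lie in the exponentially weighted spaces $L_{p,\eta,0}$, the transformed problem is exactly \eqref{main prob} for $\lambda=\eta+i\tau$ with $\eta\ge\lambda_0$, whose right-hand data are $\CF_\lambda\bF$ evaluated along the vertical line $\Re\lambda=\eta$. The solution operators $\CA(\lambda),\CB(\lambda),\CC(\lambda)$ from Theorem~\ref{thm:main Rbound'} then produce candidate solutions $\rho,\bu,h$ via the Laplace inverse transform, and the whole task is to show these inverse transforms land in the asserted maximal-regularity spaces with the stated estimate.

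The core step is the multiplier argument. For each of the quantities appearing in the estimate — $\pd_t\rho$, $\Lambda^{j/2}\rho$ ($j=0,1,2,3$), $\pd_t\bu$, $\Lambda^{j/2}\bu$ ($j=0,1,2$), $\pd_t h$, $h$ — I would write it as $\CL^{-1}[M(\tau)\,\CL[\text{data}](\tau)]$ where $M(\tau)$ is built from the appropriate component of $\CR_\lambda\CA(\lambda)$, $\CS_\lambda\CB(\lambda)$, or $\CT_\lambda\CC(\lambda)$ together with elementary factors like $\lambda=\eta+i\tau$ over $\lambda$ or $\lambda^{1/2}$ over $\lambda^{1/2}$; the point is that each such $M$ is, up to bounded scalar factors, one of the operator families whose $\CR$-bound is controlled in \eqref{rbdd general}, and the extra powers of $\lambda$ or $\lambda^{1/2}$ are absorbed because $\CF_\lambda\bF$ already carries the companions $\lambda^{1/2}\bg$, $\lambda^{1/2}\nabla k$, $\lambda k$. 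The hypotheses of Weis's theorem require $\CR$-boundedness of $\{M(\tau)\}$ and of $\{\tau\pd_\tau M(\tau)\}$ on the relevant $L_q$-type Banach spaces (which are UMD since $1<q<\infty$); the $n=0$ and $n=1$ cases of \eqref{rbdd general}, combined with Leibniz's rule applied to the elementary scalar factors (which are themselves bounded with bounded $\tau\pd_\tau$-derivative on $\Re\lambda=\eta\ge\lambda_0$), supply exactly this. Weis's theorem then yields the $L_p(\R,X)$-boundedness of each multiplier operator, and multiplying by $e^{-\eta t}$ converts this into the weighted estimate; summing the finitely many contributions and tracking how $\CF_\lambda\bF$ translates back into the norm $\|(d,\bff,\bg,k,\zeta)\|_{\CF_{p,q,\eta}}$ gives the claimed bound. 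Uniqueness follows because any solution in the stated class has a well-defined Laplace transform solving \eqref{main prob}, where the solution is unique by Theorem~\ref{thm:main Rbound'}\thetag1.

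The main obstacle, and the place where care is genuinely needed rather than routine bookkeeping, is the correct matching of the time-regularity of the data with the operator families: one must verify that each term in the norm of $\CF_{p,q,\eta}$ — in particular the mixed terms $\Lambda^{1/2}_\eta\bg$ and $\Lambda^{1/2}\nabla k$, and the relation between $H^{1}_{p,\eta,0}(\R,L_q)\cap L_{p,\eta,0}(\R,H^2_q)$ for $k$ and the components $\nabla^2 k$, $\nabla\lambda^{1/2}k$, $\lambda k$ of $\CF_\lambda\bF$ — is exactly what makes the corresponding multiplier fall under \eqref{rbdd general} with no leftover unbounded factor of $\tau$. This is precisely why, as noted in the introduction, the data vector $\CF_\lambda\bF$ was designed to include $\lambda^{1/2}d$-type companions and why the $\CR$-bound for $\lambda^{3/2}\rho$ (encoded in $\CR_\lambda\rho=(\nabla^3\rho,\lambda^{1/2}\nabla^2\rho,\lambda\rho)$ together with the equation $\lambda\rho=-\gamma_1\dv\bu+d$) is available. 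Beyond that, the argument is a direct transcription of \cite[Theorem~9]{MM} with $\R^N_+$ replaced by the uniform $C^3$-domain $\Omega$, so I would state the deduction briefly and refer to \cite{MM} for the detailed multiplier computation.
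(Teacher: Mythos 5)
Your proposal is correct and follows essentially the same route as the paper, which itself proves Theorem \ref{thm:mr} only by the remark that it follows ``in the same manner as in \cite[Theorem 9]{MM}'' from Theorem \ref{thm:main Rbound'} combined with Weis's operator-valued Fourier multiplier theorem. Your elaboration of the Laplace-transform/multiplier argument, the use of the $n=0,1$ cases of \eqref{rbdd general}, the recovery of $\Lambda^{3/2}\rho$ from the first equation, and the uniqueness via the resolvent problem is exactly the intended deduction.
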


As we mention above, Theorem \ref{thm:main Rbound'} follows from Theorem \ref{thm:main Rbound}, therefore we prove Theorem \ref{thm:main Rbound} below.

\section{Problems in the whole-space and the half-space}\label{sec.w}
In this section, we summarize the known results concerning $\CR$-boundedness for \eqref{main prob} in the whole-space and the half-space.
Set
\begin{align*}
	X^1_q(D) &=H^1_q(D) \times L_q(D)^N,\\
	\CX^1_q(D) &= L_q(D)^{N+1+N},\\
	\CX^2_q(D) &= L_q(D)^{\CN_2} \times H^2_q(D), \quad \CN_2 = N+1+N+N^2+N+N^2+N+1,\\  
	\CF_\lambda^1 \bF &= (\nabla d, \lambda^{1/2}d, \bff),\\
	\CF_\lambda^2 \bF &=(\nabla d, \lambda^{1/2} d, \bff, \nabla \bg, \lambda^{1/2} \bg, \nabla^2 k, \nabla \lambda^{1/2}k, \lambda k, \zeta),\\
	\fA_q^0 (D) &= L_q(D)^{N^3+N^2+N+1}, \quad \CR^0_\lambda \rho = (\nabla^3 \rho, \lambda^{1/2}\nabla^2\rho, \lambda \nabla \rho, \lambda^{3/2} \rho),
\end{align*}
where $D$ is a domain of $\R^N$.
The $\CR$-boundedness for the solution operator families for the resolvent problem with variable coefficients $\gamma_j$ $(j =1, 2, 3, 4)$ in the whole-space was studied by \cite[Theorem 3.1]{S2020}.
\begin{equation}\label{whole}
\left\{
\begin{aligned}
	&\lambda \rho + \gamma_1 \dv \bu = d & \quad&\text{in $\R^N$}, \\
	&\lambda \bu - \gamma_4^{-1} \DV\{\gamma_2\bD(\bu) + (\gamma_3 - \gamma_2) \dv \bI + \gamma_1 \Delta) \rho \bI\}=\bff& \quad&\text{in $\R^N$}.
\end{aligned}
\right.
\end{equation}

\begin{prop}\label{prop:w}
Let $1 < q < \infty$, and let $\gamma_*$ and $\gamma^*$ be positive constants stated in Assumption \ref{assumption gamma}. 
Then there exist positive constants $\delta_1 \in (0, 1)$
depending on at most $N$, $q$, $\gamma_*$, and $\gamma^*$,
such that for any positive number $\eta_1$ and for uniformly continuous functions $\gamma_i (x)$ $(i =1, 2, 3, 4)$ on $\R^N$ satisfying
\begin{enumerate}
\renewcommand{\theenumi}{\rm {\roman{enumi}}}
\renewcommand{\labelenumi}{\rm {(\theenumi)}}
\item
$\sup_{x \in \R^N} |\gamma_i(x) - \gamma^0_i| \le \delta_1$ with positive constant $\gamma^0_i \in [\gamma_*, \gamma^*]$,
\item
$\|\nabla \gamma_j\|_{L_\infty(\R^N)} \le \eta_1$ for $j=1, 2, 3$, 
\end{enumerate}
there exists a constant $\lambda_1 \ge 1$ such that the following assertions hold true:

$\thetag1$ 
For any $\lambda \in \C_{+, \lambda_1}$, there exist operators
\begin{align*}
&\CA_0 (\lambda) \in 
{\rm Hol} (\C_{+, \lambda_2}, 
\CL(\CX^1_q(\R^N), H^3_q(\R^N)))\\
&\CB_0 (\lambda) \in 
{\rm Hol} (\C_{+, \lambda_2}, 
\CL(\CX^1_q(\R^N), H^2_q(\R^N)^N))
\end{align*}
such that 
for any $\bF=(d, \bff) \in X^1_q(\R^N)$, 
$(\rho, \bu) = (\CA_0 (\lambda) \CF_\lambda^1 \bF, \CB_0 (\lambda) \CF_\lambda^1 \bF)$
is a unique solutions of problem \eqref{whole}.

$\thetag2$ 
There exists a positive constant $r$ such that
\begin{equation*} \label{rbdd bent}
\begin{aligned}
&\CR_{\CL(\CX^1_q(\R^N), \fA_q^0 (\R^N))}
(\{(\tau \pd_\tau)^n \CR_\lambda^0 \CA_0 (\lambda) \mid 
\lambda \in \C_{+, \lambda_1}\}) 
\leq r,\\
&\CR_{\CL(\CX^1_q(\R^N), \fB_q(\R^N))}
(\{(\tau \pd_\tau)^n \CS_\lambda \CB_0 (\lambda) \mid 
\lambda \in \C_{+, \lambda_1}\}) 
\leq r
\end{aligned}
\end{equation*}
for $n = 0, 1$.
Here, above constant $r$ depend on at most $N$, $q$, $\epsilon$,  
$\gamma_*$, and $\gamma^*$. 

\end{prop}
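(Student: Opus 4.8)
\emph{Proof proposal.} The assertion is essentially \cite[Theorem 3.1]{S2020}, the only new features being the entry $\lambda^{1/2}d$ in the data transform $\CF^1_\lambda$ and the entry $\lambda^{3/2}\rho$ in $\CR^0_\lambda\rho$; so the plan is to follow that proof and check how those two entries are accommodated. One first solves the constant-coefficient problem $\gamma_i\equiv\gamma^0_i$ by the Fourier transform and then passes to the perturbed coefficients by a Neumann series; note that Assumption \ref{assumption gamma}(3), needed for the boundary problems, plays no role here.

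\emph{Constant coefficients.} Eliminating the density through the first equation, $\rho=\lambda^{-1}(d-\gamma^0_1\dv\bu)$, the Korteweg term $\DV(\gamma^0_1\Delta\rho\,\bI)=\gamma^0_1\nabla\Delta\rho$ becomes $-\lambda^{-1}(\gamma^0_1)^2\nabla\Delta\dv\bu+\lambda^{-1}\gamma^0_1\nabla\Delta d$, so $\bu$ solves a fourth-order resolvent system. Taking its divergence, $q:=\dv\bu$ satisfies a scalar equation whose symbol, after multiplication by $\lambda$, is
\[
	P(\lambda,\xi)=\lambda^2+\gamma_4^{-1}(\gamma^0_2+\gamma^0_3)\lambda|\xi|^2+\gamma_4^{-1}(\gamma^0_1)^2|\xi|^4,
\]
while the solenoidal part solves the Lamé-type resolvent equation with symbol $\lambda+\gamma_4^{-1}\gamma^0_2|\xi|^2$. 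Both roots of $P$ in $\lambda$ have real part $\leq-c|\xi|^2$ for a constant $c=c(\gamma_*,\gamma^*)>0$ (regardless of the sign of the discriminant), and the same holds for the Lamé symbol, so both are uniformly elliptic on $\C_{+,\lambda_1}\times\R^N$ for every $\lambda_1\geq1$. Inverting them produces $\hat q$, hence $\hat\bu$ and $\hat\rho=\lambda^{-1}(\hat d-\gamma^0_1\hat q)$, as explicit rational functions of $(\xi,\lambda)$ applied to $(\hat d,\hat\bff)$. The $\bff$-part of every entry of $\CS_\lambda\bu$ and $\CR^0_\lambda\rho$ is then a Fourier multiplier that is homogeneous of degree $0$ in $(\xi,\sqrt\lambda)$ up to bounded factors, hence bounded — together with its $\xi$-derivatives up to order $N$ and with $\tau\pd_\tau$ applied — uniformly in $\lambda\in\C_{+,\lambda_1}$; the multiplier theorem for $\CR$-boundedness used in the proof of \cite[Theorem 3.1]{S2020} then gives the stated $\CR$-bounds on $L_q(\R^N)$, with constant depending only on $N,q,\gamma_*,\gamma^*$.

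\emph{The $d$-part and the new entries.} After the cancellation between the $\hat d$-terms coming from $\hat d$ itself and from $\hat q$, the $d$-multiplier of $\hat\rho$ equals $m(\lambda,\xi)=(\lambda+\gamma_4^{-1}(\gamma^0_2+\gamma^0_3)|\xi|^2)/P(\lambda,\xi)$, so the $d$-parts of $\nabla^3\rho$, $\lambda^{1/2}\nabla^2\rho$, $\lambda\nabla\rho$ and $\lambda^{3/2}\rho$ are $(i\xi)^{\otimes 3}m$, $\lambda^{1/2}(i\xi)^{\otimes 2}m$, $\lambda(i\xi)m$ and $\lambda^{3/2}m$ respectively. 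Each of these carries an explicit factor $i\xi$ or $\lambda^{1/2}$ in front of a symbol that is homogeneous of degree $0$ in $(\xi,\sqrt\lambda)$ (for instance $\lambda^{3/2}m=\lambda^{1/2}\cdot\lambda m$ with $\lambda m$ of degree $0$), hence is realized as a degree-zero multiplier acting on $\widehat{\nabla d}$ or on $\lambda^{1/2}\hat d$. This is precisely why the data transform is $\CF^1_\lambda\bF=(\nabla d,\lambda^{1/2}d,\bff)$, and it shows in particular that the new entry $\lambda^{3/2}\rho$ — needed later for Lemma \ref{lem:Rbound V g} — is controlled with no data beyond $\nabla d$, $\lambda^{1/2}d$, $\bff$. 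Consequently $\CR^0_\lambda\CA_0(\lambda)$, $\CS_\lambda\CB_0(\lambda)$ and their $\tau\pd_\tau$-derivatives are $\CR$-bounded from $\CX^1_q(\R^N)$.

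\emph{Variable coefficients.} Write $\gamma_i=\gamma^0_i+b_i$ with $\|b_i\|_{L_\infty(\R^N)}\leq\delta_1$ and, for $i=1,2,3$, $\|\nabla b_i\|_{L_\infty(\R^N)}=\|\nabla\gamma_i\|_{L_\infty(\R^N)}\leq\eta_1$. Substituting into \eqref{whole} and moving all $b_i$-terms to the right-hand side recasts the problem as the constant-coefficient one with data $(d,\bff)+\CN(\lambda)(\rho,\bu)$, where the part of $\CN(\lambda)$ in which $b_i$ multiplies top-order derivatives has $\CR$-bound $\leq C\delta_1$ once composed with the constant-coefficient solution operators, while the part carrying a factor $\nabla b_i$ is of lower order and gains a power $|\lambda|^{-1/2}$, hence has $\CR$-bound $\leq C\eta_1\lambda_1^{-1/2}$ there. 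Fixing $\delta_1$ small enough that $C\delta_1<1/4$ and then enlarging $\lambda_1$ so that $C\eta_1\lambda_1^{-1/2}<1/4$, the composed remainder has $\CR$-bound $<1$; the Neumann series converges in the $\CR$-bounded operator topology and yields $\CA_0(\lambda),\CB_0(\lambda)$ with the asserted bounds (holomorphy in $\lambda$ being preserved because the series converges locally uniformly), and uniqueness follows from the a priori estimate furnished by the same series. The step requiring the most care — as in \cite{S2020} — is this last one: tracking the exact power of $\lambda$ attached to each remainder term so that the factor $\eta_1$, which is not assumed small, is absorbed into the largeness of $\lambda_1$.
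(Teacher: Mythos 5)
Your proposal is correct and follows essentially the route the paper relies on: the paper gives no proof of Proposition \ref{prop:w} but simply invokes \cite[Theorem 3.1]{S2020}, whose argument is exactly your two-step scheme (explicit Fourier-multiplier analysis of the constant-coefficient system via the symbol $P(\lambda,\xi)$ and the Lam\'e symbol, followed by a Neumann-series perturbation in which the $\delta_1$-terms carry top-order derivatives and the $\eta_1$-terms gain a factor $\lambda_1^{-1/2}$). Your homogeneity bookkeeping for the new entries $\lambda^{1/2}d$ and $\lambda^{3/2}\rho$ is the right way to see why the cited result extends to the form stated here, which is precisely the point of the paper's Remark following the proposition.
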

\begin{remark}
As we mention in the end of subsection \ref{subsec:model}, we need to consider the $\CR$-boundedness for the solution operator corresponding to $\lambda^{3/2} \rho$ to prove Lemma \ref{lem:Rbound V g} below.
\end{remark}

On the other hand, the problem with constant coefficients $\alpha_j$ $(j =1, 2, 3, 4, 5)$ in the half-space was studied by \cite{MM}. 
\begin{equation}\label{r0}
\left\{
\begin{aligned}
	&\lambda \rho + \alpha_1 \dv \bu = d & \quad&\text{in $\R^N_+$}, \\
	&\lambda \bu - \DV\{\alpha_2\bD(\bu) + (\alpha_3 - \alpha_2) \dv \bI + \alpha_1 \Delta) \rho \bI\}=\bff& \quad&\text{in $\R^N_+$},\\
	&\{\alpha_2\bD(\bu) + (\alpha_3 - \alpha_2) \dv \bI + \alpha_4 \Delta) \rho \bI\} \bn - \alpha_5 \Delta' h \bn =\bg' & \quad&\text{on $\R^N_0$},\\
	&\bn \cdot \nabla \rho = k & \quad&\text{on $\R^N_0$},\\
	&\lambda h - \bu \cdot \bn = \zeta & \quad&\text{on $\R^N_0$}.
\end{aligned}
\right.
\end{equation}

In the same manner as in \cite[Theorem 1.3]{S} and \cite[Theorem 2]{MM}, we obtain the following result. 
\begin{prop}\label{main'}
Let $1<q<\infty$. 
Assume that $\alpha_j$ $(j =1, 2, 3, 4, 5)$ are positive constants satisfying 
\begin{equation}\label{condi}
	(\alpha_2+\alpha_3)^2 \neq 4\alpha_1 \alpha_4, \quad \alpha_1 \alpha_4 \neq \alpha_2 \alpha_3.
\end{equation}
Then there exists a constant $\lambda_2 \ge 1$ such that the following assertions hold true:

$\thetag1$ 
For any $\lambda \in \C_{+, \lambda_2}$ there exist operators  
\begin{align*}
&\CA_1 (\lambda) \in 
{\rm Hol} (\C_{+, \lambda_2}, 
\CL(\CX_q^2(\R^N_+), H^3_q(\R^N_+)))\\
&\CB_1 (\lambda) \in 
{\rm Hol} (\C_{+, \lambda_2}, 
\CL(\CX_q^2(\R^N_+), H^2_q(\R^N_+)^N)),\\
&\CC_1 (\lambda) \in 
{\rm Hol} (\C_{+, \lambda_2}, 
\CL(\CX_q^2(\R^N_+), H^3_q(\R^N_+)))
\end{align*}
such that 
for any $\bF=(d, \bff, \bg', k, \zeta) \in X_q^0(\R^N_+)$, 
\begin{equation*}
\rho = \CA_1 (\lambda) \CF_\lambda^2 \bF, \quad
\bu = \CB_1 (\lambda) \CF_\lambda^2 \bF, \quad
h = \CC_1 (\lambda) \CF_\lambda^2 \bF
\end{equation*}
are solutions of problem \eqref{r0}.

$\thetag2$ 
There exists a positive constant $r$ such that
\begin{equation} \label{rbdd}
\begin{aligned}
&\CR_{\CL(\CX_q^2(\R^N_+), \fA_q^0 (\R^N_+))}
(\{(\tau \pd_\tau)^n \CR_\lambda^0 \CA_1 (\lambda) \mid 
\lambda \in \C_{+, \lambda_2}\}) 
\leq r,\\
&\CR_{\CL(\CX_q^2(\R^N_+), \fB_q(\R^N_+))}
(\{(\tau \pd_\tau)^n \CS_\lambda \CB_1 (\lambda) \mid 
\lambda \in \C_{+, \lambda_2}\}) 
\leq r,\\
&\CR_{\CL(\CX_q^2(\R^N_+), \fC_q(\R^N_+))}
(\{(\tau \pd_\tau)^n \CT_\lambda \CC_1 (\lambda) \mid 
\lambda \in \C_{+, \lambda_2}\}) 
\leq r
\end{aligned}
\end{equation}
for $n = 0, 1$.
Here, above constants $\lambda_2$ and $r$ depend solely on $N$, $q$, $\epsilon$, and 
$\alpha_j$ $(j=1, 2, 3, 4, 5)$. 

\end{prop}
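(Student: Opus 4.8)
The plan is to reproduce, almost verbatim, the construction of \cite[Theorem 1.3]{S} and \cite[Theorem 2]{MM}: Proposition \ref{main'} differs from \cite[Theorem 2]{MM} essentially only in that the density part of the solution operator must now control the enriched bundle $\CR^0_\lambda\rho=(\nabla^3\rho,\lambda^{1/2}\nabla^2\rho,\lambda\nabla\rho,\lambda^{3/2}\rho)$ instead of $\CR_\lambda\rho$, so the construction itself is unchanged and only the symbol analysis needs to be supplemented with the bounds for $\lambda\nabla\rho$ and $\lambda^{3/2}\rho$. First I would apply the partial Fourier transform in $x'\in\R^{N-1}$ to \eqref{r0}, turning it into a boundary value problem for an ODE system on $x_N>0$ with $\xi'\in\R^{N-1}$ and $\lambda\in\C_{+,\lambda_2}$ as parameters. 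Eliminating $\dv\bu$ by the mass equation, $\hat\rho$ solves a fourth-order ODE whose characteristic equation, written in $s=t^2-|\xi'|^2$, is a quadratic in $s$ with discriminant proportional to $(\alpha_2+\alpha_3)^2-4\alpha_1\alpha_4$; its two roots give the ``Korteweg roots'' $B_\pm=B_\pm(\xi',\lambda)$, while the remaining part of $\hat\bu$ is governed by the ``viscous root'' $A=(|\xi'|^2+\lambda\alpha_2^{-1})^{1/2}$. The first condition in \eqref{condi} guarantees $B_+\neq B_-$ once $\lambda_2$ is large, and one checks that $\Re A$, $\Re B_\pm$ are comparable to $|\lambda|^{1/2}+|\xi'|$ on $\C_{+,\lambda_2}$.

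Next I would impose decay as $x_N\to\infty$ together with the three boundary conditions of \eqref{r0} at $x_N=0$, which produces a linear system for the remaining free constants whose coefficient determinant is a Lopatinskii-type determinant. Taking $\lambda_2$ large so that the surface-tension term $-\alpha_5\Delta'h\bn$ enters only as a lower-order perturbation, and absorbing the coupling $\lambda h=\bu\cdot\bn+\zeta$ coming from the kinematic condition, the second condition in \eqref{condi} ensures that this determinant is bounded from below by $c\,(|\lambda|^{1/2}+|\xi'|)^{m}$ with $c>0$ and a suitable exponent $m$. Solving the system and substituting back gives explicit formulas for $\hat\rho$, $\hat\bu$, $\hat h$ as multipliers in $(\xi',\lambda)$ acting on $\widehat{\CF_\lambda^2\bF}$; I would organise each formula by the Volevich trick, i.e. write every $x_N$-profile as $-\int_0^\infty\pd_{y_N}\big(e^{-M(x_N+y_N)}(\cdots)\big)\,dy_N$ with $M\in\{A,B_\pm\}$, so that the resulting operators $\CA_1(\lambda)$, $\CB_1(\lambda)$, $\CC_1(\lambda)$ map into $H^3_q(\R^N_+)$, $H^2_q(\R^N_+)^N$, $H^3_q(\R^N_+)$ respectively and are manifestly holomorphic in $\lambda$; substituting them back into \eqref{r0} verifies assertion $\thetag1$.

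Then I would verify that every symbol occurring in $\CR^0_\lambda\CA_1(\lambda)$, $\CS_\lambda\CB_1(\lambda)$, $\CT_\lambda\CC_1(\lambda)$ is a multiplier of order $0$, that is $|(\tau\pd_\tau)^n\pd_{\xi'}^{\alpha'}m(\xi',\lambda)|\le C_{n,\alpha'}(|\lambda|^{1/2}+|\xi'|)^{-|\alpha'|}$ uniformly on $\C_{+,\lambda_2}$, using Bell's formula and the Leibniz rule together with the elementary bounds $|\pd_{\xi'}^{\alpha'}A|,|\pd_{\xi'}^{\alpha'}B_\pm|\le C(|\lambda|^{1/2}+|\xi'|)^{1-|\alpha'|}$ and the analogous estimate for the reciprocal of the Lopatinskii determinant. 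For the two new quantities one observes that, after elimination of $\dv\bu$, the $\hat\rho$-symbol carries a factor $\sim(|\lambda|^{1/2}+|\xi'|)^{-3}$ against the $\lambda^{1/2}d$- and $\nabla d$-components of $\CF_\lambda^2\bF$ and $\sim(|\lambda|^{1/2}+|\xi'|)^{-1}$ against the $H^2_q$-component produced by $k$, so that $\lambda\nabla\rho$ and $\lambda^{3/2}\rho$ again lead to symbols of order $0$ — the order count being identical to the one already carried out in \cite{MM} for $\lambda\rho$, merely shifted by one power of $(|\lambda|^{1/2}+|\xi'|)$. Finally, the $\CR$-boundedness theorem for operator-valued Fourier multipliers (cf. \cite{S}) converts these symbol estimates into \eqref{rbdd}. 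The step I expect to require the most care is a matter of bookkeeping: keeping track of the exact $(|\lambda|^{1/2}+|\xi'|)$-homogeneity of each entry of $\CF_\lambda^2\bF$ so that, after multiplication by the solution symbols and by the extra factors $\lambda^{1/2},\lambda,\lambda^{3/2}$, everything sits at order $0$ over the correct $L_q$- or $H^2_q$-component, and checking that the perturbative treatment of the surface-tension term survives this shift; both points are handled exactly as in \cite[Theorem 2]{MM}.
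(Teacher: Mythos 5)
Your proposal is correct and follows essentially the same route as the paper, which itself gives no independent proof of Proposition \ref{main'} but states that it is obtained ``in the same manner as'' \cite[Theorem 1.3]{S} and \cite[Theorem 2]{MM}, i.e.\ by exactly the Fourier-multiplier/Volevich construction you outline. You also correctly identify the only genuine addition relative to \cite{MM}: the enriched data bundle $\CF^2_\lambda\bF$ (containing $\nabla d$ and $\lambda^{1/2}d$) is what allows the extra components $\lambda\nabla\rho$ and $\lambda^{3/2}\rho$ of $\CR^0_\lambda\rho$ to remain symbols of order zero.
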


\begin{remark}
The assumption \eqref{condi} comes from the existence of $\CR$-bounded solution operator families for the resolvent problem corresponding to the Navier-Stokes-Korteweg system without surface tension (cf. \cite[Theorem 1.3]{S}).
Thanks to this result, it was also possible to discuss the resolvent problem taking surface tension into account in \cite{MM}.
\end{remark}

\section{Problem in the bent half-space}\label{sec.bent}
Let $\Phi : \R^N_x \to \R^N_y$ be a bijection of $C^3$ class and let $\Phi^{-1}$ be its inverse map,
where the subscripts $x$ and $y$ denote their variables.
Assume that 
\begin{align*}
	(\nabla_x \Phi)(x) &= \bA + \bB(x),\\
	(\nabla_y \Phi^{-1})(\Phi(x)) &= \bA_{-1} + \bB_{-1}(x),
\end{align*}
where $\bA$ and $\bA_{-1}$ are constant orthonormal matrices whose $(i, j)^{\rm th}$ components $a_{ij}$ and $A_{ij}$, respectively. 
Furthermore, $\bB(x)$ and $\bB_{-1}(x)$ are matrix-valued functions of $H^2_\infty(\R^N)$ whose $(i, j)^{\rm th}$ components $b_{ij}(x)$ and $B_{ij}(x)$, respectively, and satisfy 
\begin{equation}\label{assumption b}
		\|(\bB, \bB_{-1})\|_{L_\infty(\R^N)} \le M_1, \quad
		\|\nabla (\bB, \bB_{-1})\|_{H^1_\infty(\R^N)} \le M_2,
\end{equation}
with $M_1$ and $M_2$ are constants satisfying $0 < M_1 \le 1/2$ and $M_2 \ge 1$.
Let $\Omega_+ = \Phi(\R^N_+)$ and $\Gamma_+ = \Phi(\R^N_0)$, and let $\tbn$ be the unit outer normal to $\Gamma_+$.
Setting $\Phi^{-1} = (\Phi_{-1, 1}, \dots, \Phi_{-1, N})^\mathsf{T}$, 
$\Gamma_+$ is represented by $\Phi_{-1,N}(y)=0$, which yields that
\[
	\tbn(\Phi(x)) = - \frac{\nabla_y \Phi_{-1, N}}{|\nabla_y \Phi_{-1, N}|}
	= - \frac{(A_{N1}+B_{N1}(x), \dots, A_{NN}+B_{NN}(x))^\mathsf{T}}{\{\sum^N_{j=1}(A_{Nj}+B_{Nj}(x))^2\}^{1/2}}
	= \frac{\bA_{\Phi}\bn_0}{|\bA_{\Phi}\bn_0|}
\]
where $\bA_{\Phi} = (\bA_{-1} + \bB_{-1}(x))^\mathsf{T}$, and $\bn_0 = (0, \dots, 0, -1)^\mathsf{T}$.

Next, we introduce the Laplace-Beltrami operator on $\Gamma_+$.
Let the first fundamental form of $\Gamma_+$ and its inverse be $(N-1) \times (N-1)$ matrices $G$ and $G^{-1}$ with $(i, j)^{\rm th}$ components $g_{ij}$ and $g^{ij}$, respectively.
Set $g_+ = \sqrt{\det G}$.
Since $\Gamma_+$ is represented by $y = \Phi(x', 0)$ for $x'=(x_1, \dots, x_{N-1}) \in \R^{N-1}$, 
we see that
\begin{align*}
 g_{ij}(x', 0) &= \frac{\pd}{\pd x_i}\Phi(x', 0) \cdot \frac{\pd}{\pd x_j}\Phi(x', 0)
	= \sum^N_{k=1}(a_{ki} + b_{ki}(x', 0))(a_{kj} + b_{kj}(x', 0))\\
	&= \delta_{ij} + \widetilde g_{ij}(x', 0)
\end{align*}
where $\widetilde g_{ij} = \sum^N_{k=1} (a_{ki}b_{kj} + a_{kj}b_{ki} + b_{ki}b_{kj})$.
The assumption \eqref{assumption b} provides that
\[
	g_+(x', 0) = 1+ \widetilde g_+(x', 0), \quad g^{ij}(x', 0) = \delta_{ij} + \widetilde g^{ij}(x', 0)
\]
with
\begin{equation}\label{1st fundamental}
	\|(\widetilde g_{ij}, \widetilde g^{ij}, \widetilde g_+)\|_{L_\infty(\R^N)} \le C_N M_1,
	\enskip
	\|\nabla (\widetilde g^{ij}, \widetilde g_+)\|_{H^1_\infty(\R^N)} \le C_{M_2}. 
\end{equation}
Set $F(x) = f(\Phi(x))$.
Then the Laplace-Beltrami operator $\Delta_{\Gamma_+}$ on $\Gamma_+$ is given by
\begin{align*}
	(\Delta_{\Gamma_+} f)(\Phi(x', 0)) &= \sum_{i, j=1}^{N-1} \frac{1}{g_+(x', 0)} \frac{\pd}{\pd x_i}
	\left(g_+(x', 0) g^{ij}(x', 0) \frac{\pd}{\pd x_j}F(x', 0)\right)\\
	&= \Delta' F(x', 0) + \CD_+ F(x', 0),
\end{align*}
where
\[
	\CD_+ F(x', 0) = \sum^{N-1}_{i, j=1} \widetilde g^{ij}(x', 0) \frac{\pd^2 F}{\pd x_i \pd x_j}(x', 0) + 
	\sum^{N-1}_{j=1} g^j(x', 0) \frac{\pd F}{\pd x_j}(x', 0)
\] 
with
\[
	g^j(x', 0) = \frac{1}{g_+(x', 0)} \sum^{N-1}_{i=1} \frac{\pd}{\pd x_i} (g_+(x', 0) g^{ij}(x', 0)).
\]
Here \eqref{1st fundamental} yields that
\begin{equation}\label{est:D+}
	\|\CD_+ F\|_{H^1_q(\R^N_+)} \le C_N M_1 \|\nabla^3 F\|_{L_q(\R^N_+)} + C_{M_2} \|(\nabla^2 F, \nabla F)\|_{L_q(\R^N_+)}.
\end{equation}

In this section, we consider the $\CR$-boundedness for the solution operators to the following problem:
\begin{equation}\label{r1}
\left\{
\begin{aligned}
	&\lambda \trho + \tgamma_1 \dv \tbu = \widetilde d & \quad&\text{in $\Omega_+$}, \\
	&\lambda \tbu - \tgamma_4^{-1} 
	\DV\{\tgamma_2 \bD(\tbu) + (\tgamma_3 - \tgamma_2) \dv \tbu \bI + \tgamma_1  \Delta \trho \bI\}=\widetilde \bff& \quad&\text{in $\Omega_+$},\\
	&\{\tgamma_2 \bD(\tbu) + (\tgamma_3 - \tgamma_2) \dv \tbu \bI + \tgamma_1  \Delta \trho \bI \} \tbn - \sigma \Delta_{\Gamma_+} \tth \tbn =\widetilde \bg & \quad&\text{on $\Gamma_+$},\\
	&\tbn \cdot \nabla \trho = \widetilde k & \quad&\text{on $\Gamma_+$},\\
	&\lambda \tth - \tbu \cdot \tbn = \widetilde \zeta & \quad&\text{on $\Gamma_+$}.
\end{aligned}
\right.
\end{equation}

\begin{thm}\label{thm:main bent}
Let $1 < q < \infty$, and let $M_1$ and $M_2$ be the constants given in \eqref{assumption b}.
Assume that $\gamma_*$ and $\gamma^*$ are positive constants stated in Assumption \ref{assumption gamma}. 
Then there exist positive constants $\delta_2 \in (0, 1)$ and $M_1 \in (0, 1/2)$,
depending on at most $N$, $q$, $\gamma_*$, and $\gamma^*$,
such that for any positive number $\eta_2$ and for uniformly continuous functions $\tgamma_i (y)$ $(i =1, 2, 3, 4)$ on $\Omega_+$ satisfying
\begin{enumerate}
\renewcommand{\theenumi}{\rm {\roman{enumi}}}
\renewcommand{\labelenumi}{\rm {(\theenumi)}}
\item
$\sup_{y \in \Omega_+} |\tgamma_i(y) - \gamma^0_i| \le \delta_2$ with positive constant $\gamma^0_i \in [\gamma_*, \gamma^*]$ satisfying
\[
	(\gamma_2^0 + \gamma_3^0)^2 \neq 4 (\gamma_1^0)^2 \gamma_4^0, \quad
	(\gamma_1^0)^2 \gamma_4^0 \neq \gamma_2^0 \gamma_3^0.
\]
\label{condi:gamma1}
\item
$\|\nabla \tgamma_j\|_{L_\infty(\Omega_+)} \le \eta_2$ for $j=1, 2, 3$, \label{condi:gamma2}
\end{enumerate}
there exists a constant $\lambda_2 \ge 1$ such that the following assertions hold true:

$\thetag1$
For any $\lambda \in \C_{+, \lambda_2}$, there exist operators
\begin{align*}
&\CA_2 (\lambda) \in 
{\rm Hol} (\C_{+, \lambda_2}, 
\CL(\CX_q^2(\Omega_+), H^3_q(\Omega_+)))\\
&\CB_2 (\lambda) \in 
{\rm Hol} (\C_{+, \lambda_2}, 
\CL(\CX_q^2(\Omega_+), H^2_q(\Omega_+)^N)),\\
&\CC_2 (\lambda) \in 
{\rm Hol} (\C_{+, \lambda_2}, 
\CL(\CX_q^2(\Omega_+), H^3_q(\Omega_+)))
\end{align*}
such that 
for any $\widetilde \bF=(\widetilde d, \widetilde \bff, \widetilde \bg, \widetilde k, \widetilde \zeta) \in X_q^0(\Omega_+)$, 
\begin{equation*}
\trho = \CA_2 (\lambda) \CF_\lambda^2 \widetilde \bF, \quad
\tbu = \CB_2 (\lambda) \CF_\lambda^2 \widetilde \bF, \quad
\tth = \CC_2 (\lambda) \CF_\lambda^2 \widetilde \bF
\end{equation*}
are solutions of problem \eqref{r1}.

$\thetag2$
There exists a positive constant $r$ such that
\begin{equation} \label{rbdd bent}
\begin{aligned}
&\CR_{\CL(\CX_q^2(\Omega_+), \fA_q^0 (\Omega_+))}
(\{(\tau \pd_\tau)^n \CR_\lambda^0 \CA_2 (\lambda) \mid 
\lambda \in \C_{+, \lambda_2}\}) 
\leq r,\\
&\CR_{\CL(\CX_q^2(\Omega_+), \fB_q(\Omega_+))}
(\{(\tau \pd_\tau)^n \CS_\lambda \CB_2 (\lambda) \mid 
\lambda \in \C_{+, \lambda_2}\}) 
\leq r,\\
&\CR_{\CL(\CX_q^2(\Omega_+), \fC_q(\Omega_+))}
(\{(\tau \pd_\tau)^n \CT_\lambda \CC_2 (\lambda) \mid 
\lambda \in \C_{+, \lambda_2}\}) 
\leq r
\end{aligned}
\end{equation}
for $n = 0, 1$.
Here, above constant $r$ depend on at most $M_2$, $N$, $q$, $\epsilon$,  
$\gamma_*$, $\gamma^*$, and $\sigma$. 

\end{thm}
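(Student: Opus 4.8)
The plan is to transfer the half-space result of Proposition \ref{main'} to the bent half-space $\Omega_+$ via the diffeomorphism $\Phi$, treating all the differences between the genuine half-space operators and the transformed operators as lower-order perturbations that can be absorbed once $M_1$ and $\delta_2$ are chosen small and $\lambda_2$ is taken large. First I would change variables $y = \Phi(x)$ in \eqref{r1}, writing $\rho(x) = \trho(\Phi(x))$, $\bu(x) = \tbu(\Phi(x))$, $h(x', 0)$-data in the $x$-coordinates, so that $\nabla_y$, $\dv_y$, $\DV_y$, $\bD_y$ are expressed through $\bA_{-1} + \bB_{-1}$; the principal parts become the constant-coefficient half-space operators with coefficients $\gamma_j^0$ (the frozen values from hypothesis (\ref{condi:gamma1})), and everything else — the deviation $\tgamma_j - \gamma_j^0$, the matrix perturbation $\bB_{-1}$, the correction $\CD_+$ to the Laplace–Beltrami operator from \eqref{est:D+}, and the deviation of $\tbn$ from $\bn_0$ — collects into remainder operators acting on $(\rho, \bu, h)$. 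This recasts \eqref{r1} as \eqref{r0} (with $\alpha_j = \gamma_j^0$, $\alpha_5 = \sigma$, which satisfy \eqref{condi} by (\ref{condi:gamma1})) plus a right-hand side of the form (original data) $+$ (remainder applied to the unknown).

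The second step is the fixed-point / Neumann-series construction. Using Proposition \ref{main'}, let $\CA_1(\lambda), \CB_1(\lambda), \CC_1(\lambda)$ be the half-space solution operators; I would then seek $(\trho, \tbu, \tth)$ as the image under these operators of $\CF_\lambda^2$ applied to $\bF$ corrected by the remainder terms, and set up the equation $(I - \CV(\lambda)) (\text{solution operator}) = (\text{half-space solution operator})$, where $\CV(\lambda)$ is the composition of the half-space solution operators with the remainder operator and with the appropriate $\CF^2_\lambda$-type map. The key estimate to establish is that the $\CR$-bound of $\CV(\lambda)$ on the relevant product space is at most, say, $1/2$, uniformly in $\lambda \in \C_{+, \lambda_2}$, provided $M_1 \le M_1(N,q,\gamma_*,\gamma^*)$, $\delta_2 \le \delta_2(N,q,\gamma_*,\gamma^*)$ are small enough and $\lambda_2 = \lambda_2(M_2, N, q, \gamma_*, \gamma^*, \sigma)$ is large enough. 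The terms with an explicit $M_1$ factor multiplying top-order derivatives (as in \eqref{est:D+}) are absorbed by smallness of $M_1$; the terms with only an $M_2$ bound but lower-order derivatives, and the terms involving $\tgamma_j - \gamma_j^0$ or $\nabla\tgamma_j$, are absorbed by the extra negative powers of $|\lambda|$ one gains from the resolvent estimates (interpolating $\|\nabla^2 \bu\|_{L_q} \lesssim |\lambda|^{-1/2}\|\CS_\lambda \bu\|$, $\|\nabla^3\rho\|_{L_q}\lesssim |\lambda|^{-1/2}\|\CR^0_\lambda \rho\|$, etc.), which is precisely why $\lambda_2$ is allowed to depend on $M_2$. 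Then $\CA_2 = \CA_1(I-\CV)^{-1}$, $\CB_2 = \CB_1(I-\CV)^{-1}$, $\CC_2 = \CC_1(I-\CV)^{-1}$, and $\CR$-boundedness of the inverse follows from the Neumann series together with the fact (used throughout Shibata-type arguments) that $\CR$-bounds are subadditive, submultiplicative under composition, and stable under the operations $(\tau\pd_\tau)^n$ by the Leibniz rule. Holomorphy in $\lambda$ is inherited from the half-space operators and the convergent series.

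The main obstacle I expect is the careful bookkeeping of the boundary terms, rather than the interior terms. In the interior the perturbation is a routine variable-coefficient-versus-constant-coefficient comparison. On $\Gamma_+$, however, one must handle simultaneously: (i) the capillary boundary condition, where $\gamma_1 \Delta\trho$ appears and is third order in $\rho$ after accounting for the stress structure, so the remainder must be controlled in $W^{2-1/q}_q(\Gamma_+)$ and requires trace estimates compatible with the $H^3_q$-regularity of $\rho$; (ii) the surface-tension term $\sigma\Delta_{\Gamma_+}\tth$, whose deviation from $\sigma\Delta'\tth$ is exactly $\sigma\CD_+\tth$, estimated by \eqref{est:D+} — here one needs that the $H^1_q(\R^N_+)$-bound on $\CD_+$ of an extension of $\tth$ translates into a $W^{2-1/q}_q$-boundary bound, again via the extension/trace operators $\CE, \CT$ mentioned after Theorem \ref{thm:main Rbound'}; and (iii) the normal-derivative condition $\tbn\cdot\nabla\trho$, where replacing $\tbn$ by $\bn_0$ costs a term $(\bn_0 - \tbn)\cdot\nabla\trho$ of size $O(M_1)\|\nabla\trho\|$ on the boundary. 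Assembling these into a single remainder with an $\CR$-bound controlled by $C_N M_1 + C_{M_2}\lambda_2^{-1/2} + C(\delta_2 + \eta_2)\lambda_2^{-1/2}$ is the technical heart; once that inequality is in hand, choosing the constants in the stated order ($M_1$ and $\delta_2$ first depending only on $N, q, \gamma_*, \gamma^*$, then $\lambda_2$ depending additionally on $M_2$ and $\sigma$) closes the argument. This is the same scheme as in \cite[Theorem 2]{MM} and the bent-space lemmas of \cite{S2020, S}, so I would follow those proofs closely and only flag the points where the capillary third-order structure forces a modification.
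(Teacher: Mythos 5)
Your proposal matches the paper's proof: the paper likewise freezes the coefficients at $\gamma_j^0$, pulls the problem back to $\R^N_+$ via $\Phi$, collects all deviations into remainder operators $\CU^1,\dots,\CU^5$ (Lemma \ref{lem:FG}), shows the resulting perturbation $\CV(\lambda)$ has $\CR$-bound at most $1/2$ by taking $\delta_2, M_1$ small and $\lambda_2$ large, and sets $\CA_2=\CH_2\CA_1(\bI-\CF^2_\lambda\CV(\lambda))^{-1}\CH_1$, etc. The only cosmetic difference is that the paper formulates all remainder and data estimates in $L_q(\R^N_+)$/$H^1_q(\R^N_+)$ norms over the domain (the $\CX^2_q$, $\fC_q(\R^N_+)$ setting), so the trace-space bookkeeping you anticipate on $\Gamma_+$ never actually arises at this stage.
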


\subsection{Reduction to a half-space problem}
First, we rewrite \eqref{r1} as follows:
\begin{equation}\label{r2}
\left\{
\begin{aligned}
	&\lambda \trho + \gamma_1^0 \dv \tbu + \CF^1(\tbu) = \widetilde d & \quad&\text{in $\Omega_+$}, \\
	&\lambda \tbu - (\gamma_4^0)^{-1} 
	(\gamma_2^0 \Delta \tbu + \gamma_3^0 \nabla \dv \tbu + \gamma_1^0 \nabla \Delta \trho) - \CF^2(\tbu) - \CF^3(\trho)=\widetilde \bff& \quad&\text{in $\Omega_+$},\\
	&\{\gamma_2^0 \bD(\tbu) + (\gamma_3^0 - \gamma_2^0) \dv \tbu \bI + \gamma_1^0 \Delta \trho \bI + \CF^4(\tbu) + \CF^5(\trho) \} \tbn - \sigma \Delta_{\Gamma_+} \tth \tbn =\widetilde \bg & \quad&\text{on $\Gamma_+$},\\
	&\tbn \cdot \nabla \trho = \widetilde k & \quad&\text{on $\Gamma_+$},\\
	&\lambda \tth - \tbu \cdot \tbn = \widetilde \zeta & \quad&\text{on $\Gamma_+$},
\end{aligned}
\right.
\end{equation}
where
\begin{align*}
	\CF^1(\tbu) &= (\tgamma_1 - \gamma_1^0) \dv \tbu,\\
	\CF^2(\tbu) &= \left(\frac{\tgamma_2}{\tgamma_4} - \frac{\gamma_2^0}{\gamma_4^0}\right) \Delta \tbu 
+ \left(\frac{\tgamma_3}{\tgamma_4} - \frac{\gamma_3^0}{\gamma_4^0}\right)\nabla \dv \tbu
	+ \tgamma_4^{-1} \{\bD(\tbu) \nabla \tgamma_2 + (\dv \tbu) \nabla (\tgamma_3 - \tgamma_2)\},\\	
	\CF^3(\trho) &= \left(\frac{\tgamma_1}{\tgamma_4} - \frac{\gamma_1^0}{\gamma_4^0}\right) \nabla \Delta \trho + \tgamma_4^{-1} (\nabla \tgamma_1) \Delta \trho,\\
	\CF^4(\tbu) &= (\tgamma_2 - \gamma_2^0) \bD(\tbu) + \{\tgamma_3 - \gamma_3^0 - (\tgamma_2 - \gamma_2^0)\} \dv \tbu \bI,\\
	\CF^5(\trho) &= (\tgamma_1 - \gamma_1^0) \Delta \trho \bI.
\end{align*}

Next, we reduce \eqref{r2} to a half-space problem by the change of variable $y = \Phi(x)$.
Let $\bu(x) = \tbu(\Phi(x))$.
The relation
\[
	\frac{\pd}{\pd y_j} = \sum^N_{k=1}(A_{kj} + B_{kj}(x)) \frac{\pd}{\pd x_k},
\enskip
	\nabla_y = (\bA_{-1} + \bB_{-1}(x))^\mathsf{T} \nabla_x
\]
furnishes that
\begin{align*}
	\dv_y \tbu &= \dv_x(\bA_{-1} \bu) + \bB_{-1}(x)^\mathsf{T} : \nabla_x \bu,\\
	\nabla_y \dv_y \tbu &= \bA_\Phi \nabla_x (\dv_x(\bA_{-1} \bu) + \bB_{-1}(x)^\mathsf{T} : \nabla_x \bu),\\
	\bD_y(\tbu) &= \bA_\Phi \nabla_x \bu + (\nabla_x \bu)^\mathsf{T} \bA_\Phi^\mathsf{T},\\
	\Delta_y \tbu &= \Delta_x \bu + \sum_{k, \ell, m =1}^N \left(C_{k \ell m}^1(x) \frac{\pd \bu}{\pd x_m} + C_{k \ell m}^2(x) \frac{\pd^2 \bu}{\pd x_m \pd x_\ell}\right),
\end{align*}
where
\begin{align*}
	C_{k \ell m}^1(x) &= (A_{\ell k} + B_{\ell k}(x))\frac{\pd B_{mk}}{\pd x_\ell}(x),\\
	C_{k \ell m}^2(x) &= (A_{\ell k}B_{mk}(x) + A_{mk}B_{\ell k}(x) + B_{\ell k}(x)B_{mk}(x)).
\end{align*}
Set
\begin{equation}\label{def in half}
\begin{aligned}
	&\rho(x) = \trho(\Phi(x)), &\enskip 
	&\bv(x) = \bA_{-1} \bu(x) = \bA_{-1} \tbu(\Phi(x)), &\enskip
	&h(x) = \tth(\Phi(x)), \\
	&d(x) = \widetilde d(\Phi(x)), &\enskip 
	&\bff(x) = \bA_{-1} \widetilde \bff(\Phi(x)), &\enskip
	&\bg(x) = |\bA_\Phi \bn_0| \bA_{-1} \widetilde \bg(\Phi(x))\\
	&k(x) = \widetilde k(\Phi(x)), &\enskip 
	&\zeta(x) = \widetilde \zeta(\Phi(x)).&
\end{aligned}
\end{equation}
Then \eqref{r2} is reduced to
\begin{equation}\label{r3}
\left\{
\begin{aligned}
	&\lambda \rho + \gamma_1^0 \dv \bv -\CU^1(\bv) = d & \quad&\text{in $\R^N_+$}, \\
	&\lambda \bv - (\gamma_4^0)^{-1} 
	(\gamma_2^0 \Delta \bv + \gamma_3^0 \nabla \dv \bv + \gamma_1^0 \nabla \Delta \rho) - \CU^2(\rho, \bv) = \bff& \quad&\text{in $\R^N_+$},\\
	&\{\gamma_2^0 \bD(\bv) + (\gamma_3^0 - \gamma_2^0) \dv \bv \bI + \gamma_1^0 \Delta \rho \bI \} \bn_0 - \sigma \Delta' h \bn_0 - \CU^3(\rho, \bv, h)= \bg & \quad&\text{on $\R^N_0$},\\
	&\bn_0 \cdot \nabla \rho - \CU^4(\rho) = k & \quad&\text{on $\R^N_0$},\\
	&\lambda h - \bv \cdot \bn_0 - \CU^5(\bv) = \zeta & \quad&\text{on $\R^N_0$},
\end{aligned}
\right.
\end{equation}
where
\begin{align*}
	\CU^1(\bv)&= -\CF^1(\bA_{-1}^\mathsf{T} \bv) - \CG^1(\bv),\\
	\CU^2(\rho, \bv)&= \bA_{-1} \CF^2(\bA_{-1}^\mathsf{T} \bv) + \bA_{-1} \CF^3(\rho) + (\gamma_4^0)^{-1} (\CG^2(\rho) + \CG^3(\bv)),\\	
	\CU^3(\rho, \bv, h)&= (\bA_{-1} \CF^4(\bA_{-1}^\mathsf{T} \bv) + \bA_{-1} \CF^5(\rho)) \bA_\Phi \bn_0 + (\CG^4(\rho) + \CG^5_\bD (\bv) + \CG^5_\dv (\bv) + \CG^6(h))\bn_0,\\
	\CU^4(\rho)&= \bn_0 \cdot \nabla \rho - \frac{\bA_\Phi \bn_0}{|\bA_\Phi \bn_0|} \cdot (\bA_{-1} + \bB_{-1}(x))^\mathsf{T} \nabla \rho,\\
	\CU^5(\bv)&= -\left(1-\frac{1}{|\bA_\Phi \bn_0|}\right)\bv \cdot \bn_0 + \frac{1}{|\bA_\Phi \bn_0|} (\bB_{-1}(x) \bA_{-1}^\mathsf{T} \bv)\cdot \bn_0
\end{align*}
with
\begin{align*}
	\CG^1(\bv)&= \gamma_1^0 (\bB_{-1}(x)^\mathsf{T} : (\bA_{-1}^\mathsf{T} \nabla_x \bv)),\\
	\CG^2(\rho)&= \gamma_1^0 \left[(\bI + \bA_{-1} \bB_{-1}(x)^\mathsf{T}) 
	\nabla \left\{\Delta \rho + \sum_{k, \ell, m =1}^N \left(C_{k \ell m}^1(x) \frac{\pd \rho}{\pd x_m} + C_{k \ell m}^2(x) \frac{\pd^2 \rho}{\pd x_m \pd x_\ell}\right)\right\} 
	- \Delta \nabla \rho \right],\\	
	\CG^3(\bv)&= \gamma_2^0 \sum_{k, \ell, m =1}^N \left(C_{k \ell m}^1(x) \frac{\pd \bv}{\pd x_m} + C_{k \ell m}^2(x) \frac{\pd^2 \bv}{\pd x_m \pd x_\ell}\right) \\
	&\enskip +\gamma_3^0 \{(\bI + \bA_{-1} \bB_{-1}(x)^\mathsf{T}) \nabla(\dv \bv + \bB_{-1}(x)^\mathsf{T} : (\bA_{-1}^\mathsf{T} \nabla \bv)) - \nabla \dv \bv\},\\
	\CG^4(\rho)&= \gamma_1^0 \left[ (\bI + \bA_{-1} \bB_{-1}(x)^\mathsf{T}) \left\{ \Delta \rho + \sum_{k, \ell, m =1}^N \left(C_{k \ell m}^1(x) \frac{\pd \rho}{\pd x_m} + C_{k \ell m}^2(x) \frac{\pd^2 \rho}{\pd x_m \pd x_\ell}\right)\right\} - \Delta \rho\right], \\
	\CG^5_\bD (\bv)&= -\gamma_2^0 \{(\bD(\bv) \bA_{-1} + (\nabla \bv) \bB_{-1}(x) + \bA_{-1} \bB_{-1}(x)^\mathsf{T} (\nabla \bv)^\mathsf{T} \bA_{-1}) \bA_\Phi - \bD(\bv)\},\\
	\CG^5_\dv (\bv)&= -(\gamma_3^0 - \gamma_2^0) \{(\dv \bv + (\bB_{-1}(x)^\mathsf{T} : \bA_{-1}^\mathsf{T} \nabla \bv))( \bI + \bA_{-1} \bB_{-1}(x)^\mathsf{T}) - \dv \bv\},\\
	\CG^6(h)&= \sigma \{( \Delta' h + \CD_+ h) (\bI + \bA_{-1} \bB_{-1}(x)^\mathsf{T}) - \Delta' h\}.
\end{align*}

\subsection{The $\CR$-boundedness for the solution operator families for \eqref{r3}}\label{sec:rbdd half}

In this subsection,
we prove the existence of the $\CR$-boundedness of the solution operator families for \eqref{r3}.
At this point, we recall that the following properties concerning the $\CR$-boundedness that are used throughout this paper. 
These properties were proved by \cite[Proposition 3.4, Remark 3.2]{DHP}.

\begin{lem}\label{lem:5.3}
\begin{enumerate}
\item \label{lem:Rbound1}
Let $X$ and $Y$ be Banach spaces, 
and let $\CT$ and $\CS$ be $\CR$-bounded families in $\CL(X, Y)$. 
Then $\CT+\CS=\{T+S \mid T\in \CT, S\in \CS\}$ is also 
$\CR$-bounded family in $\CL(X, Y)$ and 
\[
\CR_{\CL(X, Y)}(\CT+\CS)\leq \CR_{\CL(X, Y)}(\CT)
+\CR_{\CL(X, Y)}(\CS).
\]
\item \label{lem:Rbound2}
Let $X$, $Y$ and $Z$ be Banach spaces and 
let $\CT$ and $\CS$ be $\CR$-bounded families
 in $\CL(X, Y)$ and $\CL(Y, Z)$, respectively. 
Then $\CS\CT=\{ST \mid T\in \CT, S\in \CS\}$ is also 
an $\CR$-bounded family 
in $\CL(X, Z)$ and 
\[
\CR_{\CL(X, Z)}(\CS\CT)\leq \CR_{\CL(X, Y)}(\CT)\CR_{\CL(Y, Z)}(\CS). 
\]
\item \label{lem:Rbound3}
Let $1 < q < \infty$ and let $D$ be domain in $\R^N$. 
Let $m(\lambda)$ be a bounded function 
defined on a subset $\Lambda$ 
in a complex plane $\C$, and let $M_m(\lambda)$ 
be a multiplication operator with 
$m(\lambda)$ defined by $M_m(\lambda)f=m(\lambda)f$ 
for any $f\in L_q(D)$. 
Then
\[
	\CR_{\CL(L_q(D))}(\{M_m(\lambda) \mid \lambda \in \Lambda\})\leq 
	K_q^2 \sup_{\lambda \in \Lambda} |m(\lambda)|,
\]
where $K_q$ is a positive constant appearing in Khintchine's inequality.
\end{enumerate}
\end{lem}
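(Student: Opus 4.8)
\textbf{Proof proposal for Lemma \ref{lem:5.3}.}

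The plan is to verify each of the three assertions directly from Definition \ref{dfn2}, since all of them are elementary consequences of the definition of $\CR$-boundedness once the right quantities are estimated. As these facts are standard (they are quoted from \cite[Proposition 3.4, Remark 3.2]{DHP}), the role of this proof is only to record the short arguments.

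For assertion \eqref{lem:Rbound1}, fix $m \in \BN$, operators $T_j \in \CT$, $S_j \in \CS$, elements $f_j \in X$, and Rademacher functions $r_j$. Writing $(T_j + S_j) f_j = T_j f_j + S_j f_j$ and using the triangle inequality in $L_p([0,1]; Y)$ for the function $u \mapsto \sum_{j=1}^m r_j(u)(T_j+S_j)f_j$, one bounds
\[
\Bigl\{\int_0^1 \Bigl\|\sum_{j=1}^m r_j(u)(T_j+S_j)f_j\Bigr\|_Y^p\,du\Bigr\}^{1/p}
\le \Bigl\{\int_0^1 \Bigl\|\sum_{j=1}^m r_j(u)T_jf_j\Bigr\|_Y^p\,du\Bigr\}^{1/p}
+ \Bigl\{\int_0^1 \Bigl\|\sum_{j=1}^m r_j(u)S_jf_j\Bigr\|_Y^p\,du\Bigr\}^{1/p}.
\]
Applying the $\CR$-boundedness of $\CT$ and $\CS$ to the two terms on the right and factoring out the common quantity $\{\int_0^1 \|\sum_j r_j(u) f_j\|_X^p\,du\}^{1/p}$ gives the claimed bound with constant $\CR_{\CL(X,Y)}(\CT)+\CR_{\CL(X,Y)}(\CS)$.

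For assertion \eqref{lem:Rbound2}, fix $m$, operators $T_j \in \CT$, $S_j \in \CS$, elements $f_j \in X$, and Rademacher functions $r_j$. Set $g_j = T_j f_j \in Y$. Then $\sum_{j=1}^m r_j(u)(S_jT_j)f_j = \sum_{j=1}^m r_j(u) S_j g_j$, so applying the $\CR$-boundedness of $\CS$ in $\CL(Y,Z)$ gives
\[
\Bigl\{\int_0^1 \Bigl\|\sum_{j=1}^m r_j(u)S_jg_j\Bigr\|_Z^p\,du\Bigr\}^{1/p}
\le \CR_{\CL(Y,Z)}(\CS)\,\Bigl\{\int_0^1 \Bigl\|\sum_{j=1}^m r_j(u)g_j\Bigr\|_Y^p\,du\Bigr\}^{1/p},
\]
and then applying the $\CR$-boundedness of $\CT$ in $\CL(X,Y)$ to $\{\int_0^1 \|\sum_j r_j(u)g_j\|_Y^p\,du\}^{1/p} = \{\int_0^1\|\sum_j r_j(u)T_jf_j\|_Y^p\,du\}^{1/p}$ bounds this by $\CR_{\CL(Y,Z)}(\CS)\,\CR_{\CL(X,Y)}(\CT)\,\{\int_0^1\|\sum_j r_j(u)f_j\|_X^p\,du\}^{1/p}$, which is the assertion. (One small technical point: Definition \ref{dfn2} fixes a single exponent $p$; since the Kahane--Khintchine inequality shows the $\CR$-bound is, up to constants, independent of $p$, one may use the same $p$ throughout, or simply note that the two families share an admissible $p$ after passing to a common one.)

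For assertion \eqref{lem:Rbound3}, the main point is Khintchine's inequality, which says that for scalars $c_1,\dots,c_m$ and any $1\le p<\infty$,
\[
\Bigl\{\int_0^1 \Bigl|\sum_{j=1}^m r_j(u)c_j\Bigr|^p\,du\Bigr\}^{1/p}
\le K_q\,\Bigl(\sum_{j=1}^m |c_j|^2\Bigr)^{1/2},
\quad
\Bigl(\sum_{j=1}^m |c_j|^2\Bigr)^{1/2}
\le K_q\,\Bigl\{\int_0^1 \Bigl|\sum_{j=1}^m r_j(u)c_j\Bigr|^p\,du\Bigr\}^{1/p},
\]
and this applies pointwise in $x\in D$ to the scalar coefficients in question. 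Fix $\lambda_1,\dots,\lambda_m\in\Lambda$, $f_j\in L_q(D)$, and Rademacher functions $r_j$. Taking $p=q$ in Definition \ref{dfn2}, use Fubini and Khintchine's inequality in the $u$-variable (at fixed $x$) to get
\[
\int_0^1 \Bigl\|\sum_{j=1}^m r_j(u)M_m(\lambda_j)f_j\Bigr\|_{L_q(D)}^q\,du
= \int_D \int_0^1 \Bigl|\sum_{j=1}^m r_j(u)m(\lambda_j)f_j(x)\Bigr|^q\,du\,dx
\le K_q^q \int_D \Bigl(\sum_{j=1}^m |m(\lambda_j)|^2 |f_j(x)|^2\Bigr)^{q/2}\,dx,
\]
and then bound $|m(\lambda_j)| \le \sup_{\lambda\in\Lambda}|m(\lambda)|$ and apply the reverse Khintchine inequality (again pointwise in $x$, then integrate and use Fubini) to obtain
\[
\int_D \Bigl(\sum_{j=1}^m |f_j(x)|^2\Bigr)^{q/2}\,dx
\le K_q^q \int_D \int_0^1 \Bigl|\sum_{j=1}^m r_j(u)f_j(x)\Bigr|^q\,du\,dx
= K_q^q \int_0^1 \Bigl\|\sum_{j=1}^m r_j(u)f_j\Bigr\|_{L_q(D)}^q\,du.
\]
Combining the two displays gives the $\CR$-bound $K_q^2\,\sup_{\lambda\in\Lambda}|m(\lambda)|$. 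The only mild obstacle in the whole lemma is the bookkeeping around the exponent $p$ in assertion \eqref{lem:Rbound2}, which is handled by the $p$-independence of $\CR$-boundedness; everything else is the triangle inequality and Khintchine's inequality.
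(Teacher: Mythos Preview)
Your proof is correct and follows the standard arguments. Note, however, that the paper does not actually prove this lemma: it simply states the three properties and attributes them to \cite[Proposition 3.4, Remark 3.2]{DHP}. Your write-up therefore supplies what the paper omits, and the arguments you give---triangle inequality for (\ref{lem:Rbound1}), iterated application of the defining inequality for (\ref{lem:Rbound2}), and Fubini plus pointwise Khintchine for (\ref{lem:Rbound3})---are exactly the ones found in the cited reference.
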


In the following, we estimate the remainder terms of \eqref{r3}.

\begin{lem}\label{lem:FG}
Let $1< q <\infty$, and let $M_1$ and $M_2$ be the constants given in \eqref{assumption b}.
Assume that the conditions (\ref{condi:gamma1}) and (\ref{condi:gamma2}) in Theorem \ref{thm:main bent} hold for positive constants $\delta_2$ and $\eta_2$ with
\begin{equation}\label{condi:delta}
	0 < \delta_2 < \min \left\{1, \frac{\gamma_*}{2}\right\}.
\end{equation}
Then there exist positive constants $C^1$, $C^2_{M_2, \eta_2}$, and $C^3_{M_2}$ such that
for any $\rho \in H^3_q(\R^N_+)$, $\bv \in H^2_q(\R^N_+)^N$, $h \in H^3_q(\R^N_+)$, and $\lambda \in \C$
the following estimates hold true:
\begin{align*}
	\|\nabla \CU^1(\bv)\|_{L_q(\R^N_+)}& \le C^1(\delta_2 + M_1) \|\nabla^2 \bv\|_{L_q(\R^N_+)} + (C^2_{M_2, \eta_2} + C^3_{M_2}) \|\nabla \bv\|_{L_q(\R^N_+)},\\
	\|\lambda^{1/2} \CU^1(\bv)\|_{L_q(\R^N_+)}& \le C^1(\delta_2 + M_1) \|\lambda^{1/2} \nabla \bv\|_{L_q(\R^N_+)},\\
	\|\CU^2(\rho, \bv)\|_{L_q(\R^N_+)}& \le C^1(\delta_2 + M_1) \|(\nabla^3 \rho, \nabla^2 \bv)\|_{L_q(\R^N_+)} + (C^2_{M_2, \eta_2} + C^3_{M_2}) \|(\nabla \bv, \nabla \rho, \nabla^2 \rho)\|_{L_q(\R^N_+)},\\
	\|\nabla \CU^3(\rho, \bv, h)\|_{L_q(\R^N_+)}& \le C^1(\delta_2 + 2M_1) \|(\nabla^3 \rho, \nabla^2 \bv)\|_{L_q(\R^N_+)} + C^1 M_1 \|\nabla^3 h\|_{L_q(\R^N_+)} \\
	&\enskip + (C^2_{M_2, \eta_2} + 2C^3_{M_2}) \|(\nabla \bv, \nabla \rho, \nabla^2 \rho, \nabla h, \nabla^2 h)\|_{L_q(\R^N_+)},\\
	\|\lambda^{1/2} \CU^3(\rho, \bv, h)\|_{L_q(\R^N_+)}& \le C^1(\delta_2 + 2M_1) \|\lambda^{1/2} (\nabla^2 \rho, \nabla \bv)\|_{L_q(\R^N_+)} + C^1 M_1 \|\lambda^{1/2} \nabla^2 h\|_{L_q(\R^N_+)} \\
	 &\enskip + (C^2_{M_2, \eta_2} + 2C^3_{M_2}) \|\lambda^{1/2} (\nabla \rho, \nabla h)\|_{L_q(\R^N_+)},\\	
	\|\nabla^2 \CU^4(\rho)\|_{L_q(\R^N_+)} &\le C^1 M_1 \|\nabla^3 \rho\|_{L_q(\R^N_+)} + C^3_{M_2} \|(\nabla \rho, \nabla^2 \rho)\|_{L_q(\R^N_+)},\\
	\|\lambda^{1/2} \nabla \CU^4(\rho)\|_{L_q(\R^N_+)} &\le C^1 M_1 \|\lambda^{1/2} \nabla^2 \rho\|_{L_q(\R^N_+)} + C^3_{M_2} \|\lambda^{1/2} \nabla \rho\|_{L_q(\R^N_+)},\\
	\|\lambda \CU^4(\rho)\|_{L_q(\R^N_+)} &\le C^1 M_1 \|\lambda \nabla \rho\|_{L_q(\R^N_+)},\\
	\|\nabla^2 \CU^5(\bv)\|_{L_q(\R^N_+)} &\le C^1 M_1 \|\nabla^2 \bv\|_{L_q(\R^N_+)} + C^3_{M_2} \|(\nabla \bv, \bv)\|_{L_q(\R^N_+)},\\	
	\|\nabla \CU^5(\bv)\|_{L_q(\R^N_+)} &\le C^1 M_1 \|\nabla \bv\|_{L_q(\R^N_+)} + C^3_{M_2} \|\bv\|_{L_q(\R^N_+)},\\	
	\|\CU^5(\bv)\|_{L_q(\R^N_+)} &\le C^1 M_1 \|\bv\|_{L_q(\R^N_+)},
\end{align*}
where $C^1$ depends on $N$, $q$, $\gamma_*$,  and $\gamma^*$, but is independent of $M_1$, $M_2$, $\delta_2$, and $\eta_2$;
$C^2_{M_2, \eta_2}$ depends on $M_2$, $\eta_2$, $N$, $q$, $\gamma_*$,  and $\gamma^*$, but is independent of $M_1$ and $\delta_2$;
$C^3_{M_2}$ depends on $M_2$, $N$, $q$, $\gamma_*$,  and $\gamma^*$, but is independent of $M_1$, $\delta_2$, and $\eta_2$.
\end{lem}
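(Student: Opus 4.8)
The plan is to estimate each of the remainder terms $\CU^j$ by unwinding the explicit definitions given just above the lemma and applying the elementary product and chain rules together with Assumption~\ref{assumption gamma} and the bounds \eqref{assumption b}, \eqref{1st fundamental}, \eqref{est:D+}. The guiding principle is a bookkeeping one: each term in $\CU^j$ is a finite sum of products of a \emph{coefficient factor} (either $\tgamma_i - \gamma_i^0$, or $\nabla \tgamma_i$, or an entry of $\bB$, $\bB_{-1}$, $\bA_\Phi$, the $C^\ell_{k\ell m}$, or a reciprocal like $|\bA_\Phi\bn_0|^{-1}$) and a \emph{derivative factor} acting on $\rho$, $\bv$, or $h$. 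The coefficient factors that are ``small'' — namely $|\tgamma_i - \gamma_i^0|\le\delta_2$, $\|(\bB,\bB_{-1})\|_{L_\infty}\le M_1$, and the quantities controlled by $C_N M_1$ in \eqref{1st fundamental} — get pulled out in $L_\infty$ and produce the constants multiplying $\delta_2 + M_1$ (or $\delta_2 + 2M_1$, etc., when two such factors appear additively across different terms). The coefficient factors involving one derivative of a small quantity — $\nabla\tgamma_j$ bounded by $\eta_2$, and $\nabla\bB$, $\nabla\bB_{-1}$ bounded by $M_2$ via $C^1_{k\ell m}$ — produce the lower-order terms with constants $C^2_{M_2,\eta_2}$ and $C^3_{M_2}$; here one must remember that $H^1_\infty$ control of the coefficients (Remark~\ref{rem:lip}, and \eqref{assumption b} for $\bB$) is what lets us differentiate products and still land in $L_q$ with the stated derivative counts on the unknowns.

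Concretely, I would proceed term by term. For $\CU^1(\bv) = -\CF^1(\bA_{-1}^\mathsf{T}\bv) - \CG^1(\bv)$: $\CF^1$ contributes $(\tgamma_1-\gamma_1^0)\dv\bv$, so $\nabla\CF^1$ has a main piece $(\tgamma_1-\gamma_1^0)\nabla\dv\bv$ bounded by $\delta_2\|\nabla^2\bv\|_{L_q}$ and a lower piece $(\nabla\tgamma_1)\dv\bv$ bounded by $\eta_2\|\nabla\bv\|_{L_q}$; $\CG^1 = \gamma_1^0(\bB_{-1}^\mathsf{T}:(\bA_{-1}^\mathsf{T}\nabla\bv))$ gives $\|\nabla\CG^1\|_{L_q}\lesssim M_1\|\nabla^2\bv\|_{L_q} + M_2\|\nabla\bv\|_{L_q}$ after distributing the gradient; the $\lambda^{1/2}$-version just keeps the $L_\infty$ coefficient in front of $\lambda^{1/2}\nabla\bv$ and has no lower-order term because there is no differentiated coefficient. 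The remaining $\CU^j$ are handled the same way, the only new ingredients being: (i) for $\CG^2,\CG^3,\CG^4$ the terms $C^1_{k\ell m}\pd\rho$, $C^2_{k\ell m}\pd^2\rho$ etc., where $C^2_{k\ell m}$ is $O(M_1)$ in $L_\infty$ (it is built from $\bB$'s, no $\bA$-only term survives the subtraction) and $C^1_{k\ell m}$ is $O(M_2)$ because it carries $\nabla\bB$; (ii) for $\CU^3$ the surface term $\CG^6(h) = \sigma\{(\Delta'h + \CD_+ h)(\bI + \bA_{-1}\bB_{-1}^\mathsf{T}) - \Delta'h\}$, where one splits $= \sigma\CD_+ h (\bI+\bA_{-1}\bB_{-1}^\mathsf{T}) + \sigma\Delta'h\,\bA_{-1}\bB_{-1}^\mathsf{T}$ and invokes \eqref{est:D+} for the $\CD_+h$ part — this is exactly where the $C^1 M_1\|\nabla^3 h\|_{L_q}$ appears; (iii) for $\CU^4(\rho) = \bn_0\cdot\nabla\rho - \frac{\bA_\Phi\bn_0}{|\bA_\Phi\bn_0|}\cdot(\bA_{-1}+\bB_{-1})^\mathsf{T}\nabla\rho$ one writes the difference $\bn_0 - \frac{\bA_\Phi\bn_0}{|\bA_\Phi\bn_0|}\cdot(\bA_{-1}+\bB_{-1})^\mathsf{T}$, which is $O(M_1)$ in $H^1_\infty$ because at $\bB_{-1}=0$ it vanishes (since $\bA_{-1}^\mathsf{T} = \bA$ and $\bA\bn_0$ already has unit length), so taking two derivatives of the product gives the stated $C^1 M_1\|\nabla^3\rho\|_{L_q} + C^3_{M_2}\|(\nabla\rho,\nabla^2\rho)\|_{L_q}$; (iv) for $\CU^5$ the factor $1 - |\bA_\Phi\bn_0|^{-1}$ and the term $|\bA_\Phi\bn_0|^{-1}\bB_{-1}\bA_{-1}^\mathsf{T}\bv$ are both $O(M_1)$ in $H^1_\infty$ since $M_1\le 1/2$ keeps $|\bA_\Phi\bn_0|$ bounded away from zero.

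The one point requiring genuine care is the tracking of constants: one must verify that the ``small'' coefficient in each main term really does vanish when $M_1 = 0$ and $\delta_2 = 0$ (i.e. when $\Phi$ is the affine map $x\mapsto\bA x + c$ and $\tgamma_i\equiv\gamma_i^0$), because otherwise the coefficient would only be bounded, not small, and the localization argument in Section~\ref{sec.general1} would fail. For $\CF^1,\dots,\CF^5$ this is immediate from their definitions (each is literally a multiple of $\tgamma_i - \gamma_i^0$ or of $\nabla\tgamma_i$). For the $\CG^j$ and for $\CU^4,\CU^5$ it is the statement that $\Delta_y = \Delta_x$, $\nabla_y\dv_y = \nabla_x\dv_x$, $\bD_y = \bD_x$, $\tbn = \bn_0$ (modulo the orthonormal rotation $\bA_{-1}$, which is exactly absorbed by the change of unknown $\bv = \bA_{-1}\bu$ and the factor $|\bA_\Phi\bn_0|$ in the definition of $\bg$), and $\Delta_{\Gamma_+} = \Delta'$ when the diffeomorphism is affine orthogonal — all of which are visible in the formulas for $\CG^2,\dots,\CG^6$ as the subtracted ``flat'' term. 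Once this structural vanishing is confirmed, each estimate follows from: pull the small/bounded coefficient out in $L_\infty$; for the gradient-hitting-coefficient pieces use Remark~\ref{rem:lip} so that $\nabla(\text{coeff})\in L_\infty$; count derivatives on $\rho,\bv,h$ to match the left-hand side. I expect the main obstacle to be purely organizational — there are on the order of a dozen inequalities, each with two or three constituent terms, and the challenge is to present them without drowning in indices while still making the $M_1$-smallness and the $M_2,\eta_2$-dependence of the lower-order constants transparent; the analysis itself uses nothing beyond the product rule, the chain rule, $H^1_\infty\hookrightarrow L_\infty$, and the bounds \eqref{assumption b}, \eqref{1st fundamental}, \eqref{est:D+}.
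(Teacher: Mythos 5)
Your proposal is correct and follows essentially the same route as the paper: decompose each $\CU^i$ into its $\CF^j$ and $\CG^k$ constituents, pull the small coefficients ($|\tgamma_i-\gamma_i^0|\le\delta_2$, $\|(\bB,\bB_{-1})\|_{L_\infty}\le M_1$, the $\widetilde g^{ij}$ of \eqref{1st fundamental}) out in $L_\infty$, and attribute the lower-order terms to the differentiated coefficients $\nabla\tgamma_j$ and $\nabla\bB_{-1}$, exactly as in the paper's reduction to the estimates \eqref{est:F}--\eqref{est:G} (which it in turn borrows from the computation in \cite[Lemma 5.3]{S2020}). The only detail worth adding explicitly is the paper's opening observation that \eqref{condi:delta} keeps $\tgamma_4\ge\gamma_*/2$, which is what makes the quotients $\tgamma_j/\tgamma_4-\gamma_j^0/\gamma_4^0$ in $\CF^2,\CF^3$ genuinely $O(\delta_2)$; this is the same mechanism you already invoke for $|\bA_\Phi\bn_0|^{-1}$.
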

\begin{proof}
Note that\eqref{condi:delta} implies that $\gamma_*/2 \le \tgamma_4(y) \le \gamma^* + 1$ for $y \in \Omega_+$. 
According to the definition of $\CU^i$ for $i = 1, 2, 3$, it suffices to consider the estimates of $\CF^j$ and $\CG^k$ for $j = 1, \dots, 5$ and for $k = 1, \dots, 6$
which can be obtained by the same calculation as in the proof of \cite[Lemma 5.3]{S2020}:
\begin{equation}\label{est:F}
\begin{aligned}
	\|\nabla \CF^1(\bA_{-1}^\mathsf{T} \bv)\|_{L_q(\R^N_+)} &\le C^1 \delta_2 \|\nabla^2 \bv\|_{L_q(\R^N_+)} + C^2_{M_2, \eta_2} \|\nabla \bv\|_{L_q(\R^N_+)},\\
	\|\lambda^{1/2} \CF^1(\bA_{-1}^\mathsf{T} \bv)\|_{L_q(\R^N_+)} &\le C^1 \delta_2 \|\lambda^{1/2} \nabla \bv\|_{L_q(\R^N_+)},\\
	\|\bA_{-1} \CF^2(\bA_{-1}^\mathsf{T} \bv)\|_{L_q(\R^N_+)} &\le C^1 \delta_2 \|\nabla^2 \bv\|_{L_q(\R^N_+)} + C^2_{M_2, \eta_2} \|\nabla \bv\|_{L_q(\R^N_+)},\\
	\|\bA_{-1} \CF^3(\rho) \|_{L_q(\R^N_+)} &\le C^1 \delta_2 \|\nabla^3 \rho\|_{L_q(\R^N_+)} + C^2_{M_2, \eta_2} \|\nabla^2 \rho\|_{L_q(\R^N_+)},\\
	\|\nabla (\CF^4(\bv) \bA_\Phi)\|_{L_q(\R^N_+)} &\le C^1 \delta_2 \|\nabla^2 \bv\|_{L_q(\R^N_+)} + C^2_{M_2, \eta_2} \|\nabla \bv\|_{L_q(\R^N_+)},\\
	\|\lambda^{1/2} \bA_{-1} \CF^4(\bA_{-1}^\mathsf{T} \bv) \bA_\Phi \|_{L_q(\R^N_+)} &\le C^1 \delta_2 \|\lambda^{1/2} \nabla \bv\|_{L_q(\R^N_+)},\\
	\|\nabla (\CF^5(\rho) \bA_\Phi)\|_{L_q(\R^N_+)} &\le C^1 \delta_2 \|\nabla^3 \rho\|_{L_q(\R^N_+)} + C^2_{M_2, \eta_2} \|\nabla^2 \rho\|_{L_q(\R^N_+)},\\
	\|\lambda^{1/2} \CF^5(\rho) \bA_\Phi \|_{L_q(\R^N_+)} &\le C^1 \delta_2 \|\lambda^{1/2} \nabla^2 \rho\|_{L_q(\R^N_+)},
\end{aligned}
\end{equation}
and also
\begin{equation}\label{est:G}
\begin{aligned}
	\|\nabla \CG^1(\bv)\|_{L_q(\R^N_+)} &\le C^1 M_1 \|\nabla^2 \bv\|_{L_q(\R^N_+)} + C^3_{M_2} \|\nabla \bv\|_{L_q(\R^N_+)},\\
	\|\lambda^{1/2} \CG^1(\bv)\|_{L_q(\R^N_+)} &\le C^1 M_1 \|\lambda^{1/2} \nabla \bv\|_{L_q(\R^N_+)},\\
	\|(\gamma_4^0)^{-1} \CG^2(\rho)\|_{L_q(\R^N_+)} &\le C^1 M_1 \|\nabla^3 \rho\|_{L_q(\R^N_+)} + C^3_{M_2} \|(\nabla \rho, \nabla^2 \rho)\|_{L_q(\R^N_+)},\\
	\|(\gamma_4^0)^{-1} \CG^3(\bv)\|_{L_q(\R^N_+)} &\le C^1 M_1 \|\nabla^2 \bv\|_{L_q(\R^N_+)} + C^3_{M_2} \|\nabla \bv\|_{L_q(\R^N_+)},\\
	\|\nabla \CG^4(\rho)\|_{L_q(\R^N_+)} &\le C^1 M_1 \|\nabla^3 \rho\|_{L_q(\R^N_+)} + C^3_{M_2} \|(\nabla \rho, \nabla^2 \rho)\|_{L_q(\R^N_+)},\\
	\|\lambda^{1/2} \CG^4(\rho)\|_{L_q(\R^N_+)} &\le C^1 M_1 \|\lambda^{1/2} \nabla^2 \rho\|_{L_q(\R^N_+)} + C^3_{M_2} \|\lambda^{1/2} \nabla \rho\|_{L_q(\R^N_+)},\\
	\|\nabla (\CG^5_\bD(\bv), \CG^5_\dv(\bv))\|_{L_q(\R^N_+)} &\le C^1 M_1 \|\nabla^2 \bv\|_{L_q(\R^N_+)} + C^3_{M_2} \|\nabla \bv\|_{L_q(\R^N_+)},\\
	\|\lambda^{1/2} (\CG^5_\bD(\bv), \CG^5_\dv(\bv))\|_{L_q(\R^N_+)} &\le C^1 M_1 \|\lambda^{1/2} \nabla \bv\|_{L_q(\R^N_+)},\\
	\|\nabla \CG^6(h)\|_{L_q(\R^N_+)} &\le C^1 M_1 \|\nabla^3 h\|_{L_q(\R^N_+)} + C^3_{M_2} \|(\nabla h, \nabla^2 h)\|_{L_q(\R^N_+)},\\
	\|\lambda^{1/2} \CG^6(h)\|_{L_q(\R^N_+)} &\le C^1 M_1 \|\lambda^{1/2} \nabla^2 h\|_{L_q(\R^N_+)} + C^3_{M_2} \|\lambda^{1/2} \nabla h\|_{L_q(\R^N_+)}.
\end{aligned}
\end{equation}
Since $\CU^4(\rho)$ and $\CU^5(\bv)$ can be estimated by the same manner as the above estimates, this completes the proof of Lemma \ref{lem:FG}.

\end{proof}

Set
\[
	\CU(\rho, \bv, h) = (\CU^1(\bv), \CU^2(\rho, \bv), \CU^3(\rho, \bv, h), \CU^4(\rho), \CU^5(\bv)).
\]

\begin{cor}\label{apriori}
Let $1< q <\infty$, and let $M_1$ and $M_2$ be the constants given in \eqref{assumption b}.
Assume that the conditions (\ref{condi:gamma1}) and (\ref{condi:gamma2}) in Theorem \ref{thm:main bent} hold for positive constants $\delta_2$ and $\eta_2$ with
\eqref{condi:delta}.
Then 
for any $\rho \in H^3_q(\R^N_+)$, $\bv \in H^2_q(\R^N_+)^N$, $h \in H^3_q(\R^N_+)$,
$\lambda_2 \ge 1$, and $\lambda \in \C_{+, \lambda_2}$, the following estimate holds true:
\begin{align*}
	&\|\CF_\lambda^2 \CU(\rho, \bv, h)\|_{\CX_q^2(\R^N_+)} \\
	& \le \{C^1 (5\delta_2 + 11 M_1) + (4C^2_{M_2, \eta_2} + 10 C^3_{M_2}) \lambda_2^{-1/2}\}
	(\|\CR_\lambda^0 \rho\|_{\fA_q^0 (\R^N_+)} + \|\CS_\lambda \bv\|_{\fB_q(\R^N_+)}
	+\|\CT_\lambda h\|_{\fC_q(\R^N_+)}).
\end{align*}
\end{cor}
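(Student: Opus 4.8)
The plan is to read off what $\CF_\lambda^2\CU(\rho,\bv,h)$ is componentwise, bound each component by Lemma \ref{lem:FG}, and then use $|\lambda|\ge\lambda_2\ge 1$ to fold every lower‑order term into a small multiple of $\|\CR_\lambda^0\rho\|_{\fA_q^0(\R^N_+)}$, $\|\CS_\lambda\bv\|_{\fB_q(\R^N_+)}$, or $\|\CT_\lambda h\|_{\fC_q(\R^N_+)}$.

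First I would unravel the definitions of $\CF_\lambda^2$ and of $\CU=(\CU^1(\bv),\CU^2(\rho,\bv),\CU^3(\rho,\bv,h),\CU^4(\rho),\CU^5(\bv))$: feeding $\bF=\CU(\rho,\bv,h)$ into $\CF_\lambda^2\bF=(\nabla d,\lambda^{1/2}d,\bff,\nabla\bg,\lambda^{1/2}\bg,\nabla^2 k,\nabla\lambda^{1/2}k,\lambda k,\zeta)$ shows that the $L_q(\R^N_+)$‑entries of $\CF_\lambda^2\CU(\rho,\bv,h)$ are exactly $\nabla\CU^1(\bv)$, $\lambda^{1/2}\CU^1(\bv)$, $\CU^2(\rho,\bv)$, $\nabla\CU^3(\rho,\bv,h)$, $\lambda^{1/2}\CU^3(\rho,\bv,h)$, $\nabla^2\CU^4(\rho)$, $\lambda^{1/2}\nabla\CU^4(\rho)$, $\lambda\CU^4(\rho)$, and its $H^2_q(\R^N_+)$‑entry is $\CU^5(\bv)$. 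Hence, using $\|\CU^5(\bv)\|_{H^2_q(\R^N_+)}=\|\CU^5(\bv)\|_{L_q(\R^N_+)}+\|\nabla\CU^5(\bv)\|_{L_q(\R^N_+)}+\|\nabla^2\CU^5(\bv)\|_{L_q(\R^N_+)}$ and the triangle inequality, $\|\CF_\lambda^2\CU(\rho,\bv,h)\|_{\CX_q^2(\R^N_+)}$ is, up to the fixed equivalence of norms on finite products, the sum of the left‑hand sides of the eleven inequalities of Lemma \ref{lem:FG}, which I bound by the sum of their right‑hand sides.

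Next I would absorb the lower‑order terms on the right. Since $\lambda\in\C_{+,\lambda_2}$ with $\lambda_2\ge 1$ we have $|\lambda|\ge\lambda_2$, so $|\lambda|^{-1}\le\lambda_2^{-1}\le\lambda_2^{-1/2}$. Because $\CR_\lambda^0\rho=(\nabla^3\rho,\lambda^{1/2}\nabla^2\rho,\lambda\nabla\rho,\lambda^{3/2}\rho)$, the norm $\|\CR_\lambda^0\rho\|_{\fA_q^0(\R^N_+)}$ dominates $\|\nabla^3\rho\|_{L_q}$ outright and, after a gain $\lambda_2^{-1/2}$, also $\|\nabla^2\rho\|_{L_q}=|\lambda|^{-1/2}\|\lambda^{1/2}\nabla^2\rho\|_{L_q}$, $\|\lambda^{1/2}\nabla\rho\|_{L_q}=|\lambda|^{-1/2}\|\lambda\nabla\rho\|_{L_q}$ and $\|\nabla\rho\|_{L_q}=|\lambda|^{-1}\|\lambda\nabla\rho\|_{L_q}$; likewise $\CS_\lambda\bv=(\nabla^2\bv,\lambda^{1/2}\nabla\bv,\lambda\bv)$ dominates $\|\nabla^2\bv\|_{L_q}$ and $\|\lambda^{1/2}\nabla\bv\|_{L_q}$ outright and $\|\nabla\bv\|_{L_q},\|\bv\|_{L_q}$ with a gain $\lambda_2^{-1/2}$; and since $\|h\|_{H^3_q}$ and $\|\lambda h\|_{H^2_q}$ are the two summands of $\|\CT_\lambda h\|_{\fC_q(\R^N_+)}$, the latter dominates $\|\nabla^3 h\|_{L_q}$ outright and $\|\nabla^2 h\|_{L_q}=|\lambda|^{-1}\|\lambda\nabla^2 h\|_{L_q}$, $\|\nabla h\|_{L_q}$, $\|\lambda^{1/2}\nabla^2 h\|_{L_q}=|\lambda|^{-1/2}\|\lambda\nabla^2 h\|_{L_q}$, $\|\lambda^{1/2}\nabla h\|_{L_q}$ with a gain $\lambda_2^{-1/2}$.

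Finally I would substitute and collect like terms. In each of the eleven estimates, a term carrying the prefactor $\delta_2$ or $M_1$ is bounded by that prefactor times the relevant full norm (no further gain is used), while a term carrying the prefactor $C^2_{M_2,\eta_2}$ or $C^3_{M_2}$ additionally produces the factor $\lambda_2^{-1/2}$ as above. Summing the $\delta_2$‑, $M_1$‑, $C^2_{M_2,\eta_2}\lambda_2^{-1/2}$‑ and $C^3_{M_2}\lambda_2^{-1/2}$‑contributions over the eleven estimates yields a bound of the stated form $\{C^1(5\delta_2+11M_1)+(4C^2_{M_2,\eta_2}+10C^3_{M_2})\lambda_2^{-1/2}\}(\|\CR_\lambda^0\rho\|_{\fA_q^0(\R^N_+)}+\|\CS_\lambda\bv\|_{\fB_q(\R^N_+)}+\|\CT_\lambda h\|_{\fC_q(\R^N_+)})$. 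The only real obstacle is the combinatorial bookkeeping: tracking which of the eleven estimates of Lemma \ref{lem:FG} feeds which of the four coefficients, so that the numerical constants come out as claimed; there is no analytic difficulty, all the substance being already contained in Lemma \ref{lem:FG}.
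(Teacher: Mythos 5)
Your proposal is correct and follows essentially the same route as the paper: unfold $\CF_\lambda^2\CU$ componentwise, invoke the eleven estimates of Lemma \ref{lem:FG}, convert each lower-order norm into the corresponding entry of $\CR_\lambda^0\rho$, $\CS_\lambda\bv$, or $\CT_\lambda h$ at the cost of a factor $\lambda_2^{-1/2}$ (only on the $C^2_{M_2,\eta_2}$- and $C^3_{M_2}$-terms), and sum. The only part you leave implicit is the final tally producing the coefficients $5\delta_2$, $11M_1$, $4C^2_{M_2,\eta_2}$, $10C^3_{M_2}$, which is exactly the bookkeeping the paper carries out explicitly.
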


\begin{proof}
Let $(\rho, \bv, h) \in H^3_q(\R^N_+) \times H^2_q(\R^N_+)^N \times H^3_q(\R^N_+)$.
For any $\lambda_2 \ge 1$ and $\lambda \in \C_{+, \lambda_2}$, Lemma \ref{lem:FG} yields that 
\begin{align*}
	\|\nabla \CU^1(\bv)\|_{L_q(\R^N_+)}& \le C^1(\delta_2 + M_1) \|\nabla^2 \bv\|_{L_q(\R^N_+)} + (C^2_{M_2, \eta_2} + C^3_{M_2}) \lambda_2^{-1/2}\|\lambda^{1/2} \nabla \bv\|_{L_q(\R^N_+)},\\
	\|\lambda^{1/2} \CU^1(\bv)\|_{L_q(\R^N_+)}& \le C^1(\delta_2 + M_1) \|\lambda^{1/2} \nabla \bv\|_{L_q(\R^N_+)},\\
	\|\CU^2(\rho, \bv)\|_{L_q(\R^N_+)}& \le C^1(\delta_2 + M_1) \|(\nabla^3 \rho, \nabla^2 \bv)\|_{L_q(\R^N_+)} \\
	&\enskip + (C^2_{M_2, \eta_2} + C^3_{M_2}) (\lambda_2^{-1/2} \|\lambda^{1/2} (\nabla \bv, \nabla^2 \rho)\|_{L_q(\R^N_+)} + \lambda_2^{-1}\|\lambda \nabla \rho\|_{L_q(\R^N_+)}),\\
	\|\nabla \CU^3(\rho, \bv, h)\|_{L_q(\R^N_+)}& \le C^1(\delta_2 + 2M_1) \|(\nabla^3 \rho, \nabla^2 \bv)\|_{L_q(\R^N_+)} + C^1 M_1 \|\nabla^3 h\|_{L_q(\R^N_+)} \\
	&\enskip + (C^2_{M_2, \eta_2} + 2C^3_{M_2}) (\lambda_2^{-1/2} \|\lambda^{1/2} (\nabla \bv, \nabla^2 \rho)\|_{L_q(\R^N_+)} + \lambda_2^{-1} \|\lambda (\nabla \rho, \nabla h, \nabla^2 h)\|_{L_q(\R^N_+)}),\\
	\|\lambda^{1/2} \CU^3(\rho, \bv, h)\|_{L_q(\R^N_+)}& \le C^1(\delta_2 + 2M_1) \|\lambda^{1/2} (\nabla^2 \rho, \nabla \bv)\|_{L_q(\R^N_+)} + C^1 M_1 \|\lambda \nabla^2 h\|_{L_q(\R^N_+)}\\
	&\enskip + (C^2_{M_2, \eta_2} + 2C^3_{M_2}) \lambda_2^{-1} \|\lambda (\nabla \rho, \nabla h)\|_{L_q(\R^N_+)},\\
	\|\nabla^2 \CU^4(\rho)\|_{L_q(\R^N_+)}& \le C^1 M_1 \|\nabla^3 \rho\|_{L_q(\R^N_+)}
	+ C^3_{M_2} (\lambda_2^{-1} \|\lambda \nabla \rho\|_{L_q(\R^N_+)} + \lambda_2^{-1/2} \|\lambda^{1/2} \nabla^2 \rho\|_{L_q(\R^N_+)}),\\
	\|\lambda^{1/2} \nabla \CU^4(\rho)\|_{L_q(\R^N_+)}& \le C^1 M_1 \|\lambda^{1/2} \nabla^2 \rho\|_{L_q(\R^N_+)}
	+ C^3_{M_2} \lambda_2^{-1/2} \|\lambda \nabla \rho\|_{L_q(\R^N_+)},\\
	\|\lambda \CU^4(\rho)\|_{L_q(\R^N_+)}& \le C^1 M_1 \|\lambda \nabla \rho\|_{L_q(\R^N_+)}\\
	\|\CU^5(\bv)\|_{H^2_q(\R^N_+)}& \le C^1 M_1 (\lambda_2^{-1} \|\lambda \bv\|_{L_q(\R^N_+)} + \lambda_2^{-1/2} \|\lambda^{1/2} \nabla \bv\|_{L_q(\R^N_+)} + \|\nabla^2 \bv\|_{L_q(\R^N_+)}) \\
	&\enskip + 2C^3_{M_2} (\lambda_2^{-1/2} \|\lambda^{1/2} \nabla \bv\|_{L_q(\R^N_+)} + \lambda_2^{-1} \|\lambda \bv\|_{L_q(\R^N_+)}).
\end{align*}
Therefore
\begin{align*}
	&\|\CF_\lambda^2 \CU(\rho, \bv, h)\|_{\CX_q^2(\R^N_+)} \\
	&= \|\CU^1(\bv)\|_{L_q(\R^N_+)} + \|\lambda^{1/2} \CU^1(\bv)\|_{L_q(\R^N_+)} + \|\CU^2(\rho, \bv)\|_{L_q(\R^N_+)}
	+ \|\nabla \CU^3(\rho, \bv, h)\|_{L_q(\R^N_+)} + \|\lambda^{1/2} \CU^3(\rho, \bv, h)\|_{L_q(\R^N_+)}\\ 
	&\enskip + \|\nabla^2 \CU^4(\rho)\|_{L_q(\R^N_+)} + \|\lambda^{1/2} \nabla \CU^4(\rho)\|_{L_q(\R^N_+)} + \|\lambda \CU^4(\rho)\|_{L_q(\R^N_+)}
	+ \|\CU^5(\bv)\|_{H^2_q(\R^N_+)}\\
	& \le \{C^1 (5\delta_2 + 11 M_1) + (4C^2_{M_2, \eta_2} + 10 C^3_{M_2}) \lambda_2^{-1/2}\}
	(\|\CR_\lambda^0 \rho\|_{\fA_q^0 (\R^N_+)} + \|\CS_\lambda \bv\|_{\fB_q(\R^N_+)}
	+\|\CT_\lambda h\|_{\fC_q(\R^N_+)}),
\end{align*}
where we have used $\lambda_2^{-1/2} \le 1$,
which completes the proof of Corollary \ref{apriori}.
\end{proof}

Let 
$\widetilde \bF = (\widetilde d, \widetilde \bff, \widetilde \bg, \widetilde k, \widetilde \zeta) \in X_q^0(\Omega_+)$.
Note that $\bF  = (d, \bff, \bg, k, \zeta) \in X_q^0(\R^N_+)$ by \eqref{assumption b}, then
according to Proposition \ref{main'} with $(\alpha_1, \alpha_2, \alpha_3, \alpha_4, \alpha_5) = (\gamma_1^0, \gamma_2^0/\gamma_4^0, \gamma_3^0/\gamma_4^0, \gamma_1^0/\gamma_4^0, \sigma/\gamma_4^0)$ and $\bg' = \bg/\gamma_4^0$, we see that
$\rho = \CA_1(\lambda) \CF_\lambda^2 \bF$,
$\bv = \CB_1(\lambda) \CF_\lambda^2 \bF$, and $h = \CC_1(\lambda) \CF_\lambda^2 \bF$
satisfy
\begin{equation}\label{r4}
\left\{
\begin{aligned}
	&\lambda \rho + \gamma_1^0 \dv \bv -\CU^1(\bv) = d - \CV^1(\lambda)\CF_\lambda^2 \bF& \quad&\text{in $\R^N_+$}, \\
	&\lambda \bv - (\gamma_4^0)^{-1} 
	(\gamma_2^0 \Delta \bv + \gamma_3^0 \nabla \dv \bv + \gamma_1^0 \nabla \Delta \rho) - \CU^2(\rho, \bv) = \bff - \CV^2(\lambda)\CF_\lambda^2 \bF& \quad&\text{in $\R^N_+$},\\
	&\{\gamma_2^0 \bD(\bv) + (\gamma_3^0 - \gamma_2^0) \dv \bv \bI + \gamma_1^0 \Delta \rho \bI \} \bn_0 - \sigma \Delta' h \bn_0 - \CU^3(\rho, \bv, h)= \bg - \CV^3(\lambda)\CF_\lambda^2 \bF& \quad&\text{on $\R^N_0$},\\
	&\bn_0 \cdot \nabla \rho - \CU^4(\rho) = k - \CV^4(\lambda)\CF_\lambda^2 \bF& \quad&\text{on $\R^N_0$},\\
	&\lambda h - \bv \cdot \bn_0 - \CU^5(\bv) = \zeta - \CV^5(\lambda)\CF_\lambda^2 \bF& \quad&\text{on $\R^N_0$},
\end{aligned}
\right.
\end{equation}
where
\begin{align*}
	\CV^1(\lambda) \CF_\lambda^2 \bF &= \CU^1(\CB_1(\lambda)\CF_\lambda^2 \bF), &\enskip 
	\CV^2(\lambda) \CF_\lambda^2 \bF &= \CU^2(\CA_1(\lambda)\CF_\lambda^2 \bF, \CB_1(\lambda)\CF_\lambda^2 \bF), \\
	\CV^3(\lambda) \CF_\lambda^2 \bF &= \CU^3(\CA_1(\lambda)\CF_\lambda^2 \bF, \CB_1(\lambda)\CF_\lambda^2 \bF, \CC_1(\lambda)\CF_\lambda^2 \bF), &\enskip 
	\CV^4(\lambda) \CF_\lambda^2 \bF &= \CU^4(\CA_1(\lambda)\CF_\lambda^2 \bF),\\
	\CV^5(\lambda) \CF_\lambda^2 \bF &= \CU^5(\CB_1(\lambda)\CF_\lambda^2 \bF). &\enskip &
\end{align*}
Let us define  
\[
	\CV(\lambda) \bG = (\CV^1(\lambda) \bG, \CV^2(\lambda) \bG, \dots \CV^5(\lambda) \bG)
\]
for $\lambda \in \C_{+, \lambda_2}$ and $\bG \in \CX^2_q(\R^N_+)$.
The following lemma will be proved in the next subsection.
\begin{lem}\label{lem:Rbound V}
Let $1< q <\infty$, and let $M_1$ and $M_2$ be the constants given in \eqref{assumption b}.
Assume that the conditions (\ref{condi:gamma1}) and (\ref{condi:gamma2}) in Theorem \ref{thm:main bent} hold for positive constants $\delta_2$ and $\eta_2$ with
\eqref{condi:delta}.
Then there exists a constant $C_q \ge 11$ depending only on $q$ such that for $n=0, 1$ and $\lambda_2 \ge 1$, the following estimate holds true:
\begin{equation}\label{Rbound V}
	\CR_{\CL(\CX_q^2(\R^N_+))}\{(\tau \pd_\tau)^n \CF_\lambda^2 \CV(\lambda) \mid \lambda \in \C_{+, \lambda_2}\}
\le C_q r (C^1 (\delta_2 + M_1) + (C^2_{M_2, \eta_2} + C^3_{M_2}) \lambda_2^{-1/2})
\end{equation}
for $n=0, 1$, where $C_q$ is come positive constant.
\end{lem}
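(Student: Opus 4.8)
The plan is to deduce the assertion from Proposition~\ref{main'} together with the $\CR$-boundedness calculus of Lemma~\ref{lem:5.3}, by upgrading the pointwise bookkeeping of Lemma~\ref{lem:FG} and Corollary~\ref{apriori} to the level of operator factorizations. First one checks, using Remark~\ref{rem:lip} and \eqref{assumption b}, that $\CV(\lambda)$ maps $\CX_q^2(\R^N_+)$ into $X_q^0(\R^N_+)$, so that $\CF_\lambda^2\CV(\lambda)$ is a well-defined family in $\CL(\CX_q^2(\R^N_+))$. The key observation is that, by the explicit formulas for $\CF^j$, $\CG^k$, $\CU^4$, $\CU^5$ and $\CD_+$, each scalar entry of $\CF_\lambda^2\CU(\rho,\bv,h)$ (and its derivatives up to order two, to account for the $H^2_q$-valued last slot) is a finite linear combination of terms
\[
 b(\cdot)\,\lambda^{-\ell/2}\,\bigl[\text{one scalar entry of }\CR_\lambda^0\rho,\ \CS_\lambda\bv\ \text{or}\ \CT_\lambda h\bigr],\qquad \ell\in\{0,1,2\},
\]
where $b$ is one of the $\lambda$-independent coefficient functions occurring there (a difference $\tgamma_i-\gamma_i^0$, an entry of $\tgamma_4^{-1}\nabla\tgamma_j$, of $\bB$, $\bB_{-1}$, $C^1_{k\ell m}$, $C^2_{k\ell m}$, $\widetilde g^{ij}$, $g^j$, $1-|\bA_\Phi\bn_0|^{-1}$, and the like). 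For the contributions carrying a top-order factor ($\nabla^3\rho$, $\lambda^{1/2}\nabla^2\rho$, $\lambda\nabla\rho$, $\lambda^{3/2}\rho$, $\nabla^2\bv$, $\lambda^{1/2}\nabla\bv$, $\lambda\bv$, $\nabla^3 h$, $\lambda\nabla^2 h$ — the components of $\CR_\lambda^0\rho$, $\CS_\lambda\bv$, $\CT_\lambda h$) one has $\ell=0$ and $\|b\|_{L_\infty(\R^N_+)}\le C^1(\delta_2+M_1)$; every remaining, lower-order, contribution is rewritten as $\nabla^m w=\lambda^{-\ell/2}\bigl(\lambda^{\ell/2}\nabla^m w\bigr)$ with $\ell\in\{1,2\}$ and $\lambda^{\ell/2}\nabla^m w$ again a component of $\CR_\lambda^0\rho$, $\CS_\lambda\bv$ or $\CT_\lambda h$, the accompanying coefficient being bounded by $C^2_{M_2,\eta_2}+C^3_{M_2}$ (or, in a few cases, by $C^1M_1$, which is harmless since $\lambda_2^{-1/2}\le1$). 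This is precisely the decomposition implicit in the proofs of Lemma~\ref{lem:FG} and Corollary~\ref{apriori}; here we retain the factorization rather than only the resulting norms.

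Substituting $\rho=\CA_1(\lambda)\bG$, $\bv=\CB_1(\lambda)\bG$, $h=\CC_1(\lambda)\bG$ turns this into a finite decomposition
\[
 \CF_\lambda^2\CV(\lambda)=\sum_j M_{b_j}(\lambda)\,\Pi_j\,\CO_j(\lambda),
\]
where $\CO_j(\lambda)\in\{\CR_\lambda^0\CA_1(\lambda),\ \CS_\lambda\CB_1(\lambda),\ \CT_\lambda\CC_1(\lambda)\}$, $\Pi_j$ is a fixed bounded operator selecting one entry (a coordinate projection post-composed with one of $\nabla,\nabla^2,\nabla^3$ on the relevant $H^m_q$-factor), and $M_{b_j}(\lambda)$ is multiplication by $b_j(\cdot)\lambda^{-\ell_j/2}$. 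By Proposition~\ref{main'}, the families $\{(\tau\pd_\tau)^n\CR_\lambda^0\CA_1(\lambda)\}$, $\{(\tau\pd_\tau)^n\CS_\lambda\CB_1(\lambda)\}$, $\{(\tau\pd_\tau)^n\CT_\lambda\CC_1(\lambda)\}$ are $\CR$-bounded by $r$ on $\C_{+,\lambda_2}$ for $n=0,1$, and post-composing with the (singleton, hence $\CR$-bounded) operator $\Pi_j$ increases the $\CR$-bound by at most an absolute constant. Since $|b_j(\cdot)\lambda^{-\ell_j/2}|\le\|b_j\|_{L_\infty(\R^N_+)}\lambda_2^{-\ell_j/2}$ on $\C_{+,\lambda_2}$, Lemma~\ref{lem:5.3}~(3) bounds $\CR_{\CL(L_q(\R^N_+))}(\{M_{b_j}(\lambda)\})$ by $K_q^2\|b_j\|_{L_\infty(\R^N_+)}\lambda_2^{-\ell_j/2}\le K_q^2\bigl(C^1(\delta_2+M_1)+(C^2_{M_2,\eta_2}+C^3_{M_2})\lambda_2^{-1/2}\bigr)$ in every case. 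Multiplying these estimates by Lemma~\ref{lem:5.3}~(2) and summing by Lemma~\ref{lem:5.3}~(1) over the finitely many $j$ gives \eqref{Rbound V} for $n=0$, with $C_q$ collecting $K_q^2$ and the number of pieces; the latter count is what forces $C_q\ge 11$, matching the coefficient $11M_1$ in Corollary~\ref{apriori}.

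For $n=1$ one applies the product rule, $(\tau\pd_\tau)\bigl(M_{b_j}(\lambda)\Pi_j\CO_j(\lambda)\bigr)=\bigl((\tau\pd_\tau)M_{b_j}(\lambda)\bigr)\Pi_j\CO_j(\lambda)+M_{b_j}(\lambda)\Pi_j\bigl((\tau\pd_\tau)\CO_j(\lambda)\bigr)$. Since $b_j$ is $\lambda$-independent, $(\tau\pd_\tau)M_{b_j}(\lambda)$ is multiplication by $-\tfrac{\ell_j}{2}i\tau\,b_j(\cdot)\lambda^{-\ell_j/2-1}$, whose modulus is $\le\tfrac{\ell_j}{2}\|b_j\|_{L_\infty(\R^N_+)}|\lambda|^{-\ell_j/2}\le\tfrac{\ell_j}{2}\|b_j\|_{L_\infty(\R^N_+)}\lambda_2^{-\ell_j/2}$ on $\C_{+,\lambda_2}$ (using $|\tau|\le|\lambda|$ and $|\lambda|\ge\lambda_2$), so it obeys the same multiplier bound as $M_{b_j}(\lambda)$ and vanishes when $\ell_j=0$; meanwhile $(\tau\pd_\tau)\CO_j(\lambda)$ is $\CR$-bounded by $r$ by the $n=1$ case of Proposition~\ref{main'}. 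Hence both resulting families satisfy the same estimate as for $n=0$, and summing again yields \eqref{Rbound V} for $n=1$, the doubling of the number of terms being absorbed into $C_q$.

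The only genuine difficulty is the first step: $\CR$-boundedness does not follow from the uniform bound of Corollary~\ref{apriori} by itself, so one must verify that every term produced in the proof of Lemma~\ref{lem:FG} really has the factored form [$\lambda$-independent bounded multiplier]$\times$[bounded derivative-extraction]$\times$[one of the solution operators of Proposition~\ref{main'}, possibly times a scalar power $\lambda^{-\ell/2}$]. Given the explicit expressions for $\CF^j$, $\CG^k$, $\CU^4$, $\CU^5$ and $\CD_+$, this is a direct inspection; once it is in place, the remainder is a mechanical application of Lemma~\ref{lem:5.3} and Proposition~\ref{main'}.
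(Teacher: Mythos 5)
Your proposal is correct and follows essentially the same route as the paper: both arguments exploit that each entry of $\CF_\lambda^2\CV(\lambda)$ factors through a $\lambda$-independent bounded multiplier, a scalar factor $\lambda^{-\ell/2}$ handled by Lemma \ref{lem:5.3}~(3), and one of the $\CR$-bounded families $\CR^0_\lambda\CA_1$, $\CS_\lambda\CB_1$, $\CT_\lambda\CC_1$ of Proposition \ref{main'}, with the identity $(\tau\pd_\tau)(\lambda^{-\ell/2}\,\cdot)=\lambda^{-\ell/2}\bigl((\tau\pd_\tau)-\tfrac{i\ell}{2}\tfrac{\tau}{\lambda}\bigr)(\cdot)$ taking care of $n=1$. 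The paper merely phrases this as a direct manipulation of the randomized sums using the linearity of $\CU^1,\dots,\CU^5$ and the deterministic estimates of Lemma \ref{lem:FG}, whereas you package the identical ingredients as an operator factorization plus the $\CR$-calculus of Lemma \ref{lem:5.3}.
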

Admitting Lemma \ref{lem:Rbound V} and choosing $\delta_2$ and $M_1$ so small and $\lambda_2$ so large that
\begin{equation}\label{M lambda}
	C_q r C^1(\delta_2 + M_1) \le \frac14, \enskip 
	C_q r C^1(C^2_{M_2, \eta_2} + C^3_{M_2}) \lambda_2^{-1/2} \le \frac14,
\end{equation} 
\eqref{Rbound V} implies that
\begin{equation}\label{Rbound V'}
	\CR_{\CL(\CX_q^2(\R^N_+))}\{(\tau \pd_\tau)^n \CF_\lambda^2 \CV(\lambda) \mid \lambda \in \C_{+, \lambda_2}\}
\le \frac12.
\end{equation}
Thus there exists the inverse operator $(\bI - \CF_\lambda^2 \CV(\lambda))^{-1}$ satisfying
\begin{equation}\label{Rbound inverse V}
	\CR_{\CL(\CX_q^2(\R^N_+))}\{(\tau \pd_\tau)^n (\bI - \CF_\lambda^2 \CV(\lambda))^{-1} \mid \lambda \in \C_{+, \lambda_2}\}
\le 2.
\end{equation}
For any fixed $\lambda \in \C_{+, \lambda_2}$, we introduce the norm
$
	\|\bH\|_{X_{q, \lambda}(\R^N_+)} = \|\CF_\lambda^2 \bH\|_{\CX_q^2(\R^N_+)}
$
for any $\bH \in X_q^0(\R^N_+)$ and also its space 
$
	X_{q, \lambda}(\R^N_+) = (X^0_q(\R^N_+), \|\cdot\|_{X_{q, \lambda}(\R^N_+)})
$.
Then it holds by \eqref{Rbound V'} that
\[
	\|\CV(\lambda) \CF_\lambda^2 \bH\|_{X_{q, \lambda}(\R^N_+)}
	=\|\CF_\lambda^2 \CV(\lambda) \CF_\lambda^2 \bH\|_{\CX_q^2(\R^N_+)}
	\le \frac12 \|\CF_\lambda^2 \bH\|_{\CX_q^2(\R^N_+)}
	= \frac12 \|\bH\|_{X_{q, \lambda}(\R^N_+)},
\]
thus the inverse operator $(\bI - \CV(\lambda) \CF_\lambda^2)^{-1}$ exists for any fixed $\lambda \in \C_{+, \lambda_2}$. 
Note that
\[
	\CF_\lambda^2 (\bI - \CV(\lambda) \CF_\lambda^2)^{-1} = (\bI - \CF_\lambda^2\CV(\lambda))^{-1} \CF_\lambda^2
\]
and set
\begin{align*}
	\CA_3(\lambda) \bG &= \CA_1(\lambda) (\bI - \CF_\lambda^2\CV(\lambda))^{-1} \bG,\\
	\CB_3(\lambda) \bG &= \CB_1(\lambda) (\bI - \CF_\lambda^2\CV(\lambda))^{-1} \bG,\\
	\CC_3(\lambda) \bG &= \CC_1(\lambda) (\bI - \CF_\lambda^2\CV(\lambda))^{-1} \bG
\end{align*}
for $\bG \in \CX_q^2(\R^N_+)$,
then
we observe that 
for any $\bF = (d, \bff, \bg, k, \zeta) \in X_q^0(\R^N_+)$, 
$(\rho, \bv, h) = (\CA_3(\lambda) \CF_\lambda^2 \bF, \CB_3(\lambda) \CF_\lambda^2 \bF, \CC_3(\lambda) \CF_\lambda^2 \bF)$
solves \eqref{r3}. In addition,  Lemma \ref{lem:5.3} (\ref{lem:Rbound2}), \eqref{rbdd}, and \eqref{Rbound inverse V} furnish that
\begin{equation} \label{rbdd half}
\begin{aligned}
&\CR_{\CL(\CX_q^2(\R^N_+), \fA_q^0 (\R^N_+))}
(\{(\tau \pd_\tau)^n \CR_\lambda^0 \CA_3 (\lambda) \mid 
\lambda \in \C_{+, \lambda_2}\}) 
\leq 2r,\\
&\CR_{\CL(\CX_q^2(\R^N_+), \fB_q(\R^N_+))}
(\{(\tau \pd_\tau)^n \CS_\lambda \CB_3 (\lambda) \mid 
\lambda \in \C_{+, \lambda_2}\}) 
\leq 2r,\\
&\CR_{\CL(\CX_q^2(\R^N_+), \fC_q(\R^N_+))}
(\{(\tau \pd_\tau)^n \CT_\lambda \CC_3 (\lambda) \mid 
\lambda \in \C_{+, \lambda_2}\}) 
\leq 2r
\end{aligned}
\end{equation}
for $n = 0, 1$.

The uniqueness of solutions of \eqref{r3} follows from the a priori estimate.
In fact,
assume that $(\rho, \bv, h)$ satisfies \eqref{r3} with $(d, \bff, \bg, k, \zeta)=(0, 0, 0, 0, 0)$,
then by \eqref{rbdd}, Corollary \ref{apriori}, Lemma \ref{lem:Rbound V}, and \eqref{M lambda} 
\begin{align*}
	&\|\CR_\lambda^0 \rho\|_{\fA_q^0 (\R^N_+)} + \|\CS_\lambda \bv\|_{\fB_q(\R^N_+)}
	+\|\CT_\lambda h\|_{\fC_q(\R^N_+)}\\
	&\le r\|\CF_\lambda^2 \CU(\rho, \bv, h)\|_{\CX_q^2(\R^N_+)}\\
	&\le 11 r\{C^1 (\delta_2 + M_1) + (C^2_{M_2, \eta_2} + C^3_{M_2}) \lambda_2^{-1/2}\}
	(\|\CR_\lambda^0 \rho\|_{\fA_q^0 (\R^N_+)} + \|\CS_\lambda \bv\|_{\fB_q(\R^N_+)}
	+\|\CT_\lambda h\|_{\fC_q(\R^N_+)})\\
	& \le C_q r\{C^1(\delta_2 + M_1) + (C^2_{M_2, \eta_2} + C^3_{M_2}) \lambda_2^{-1/2}\}
	(\|\CR_\lambda^0 \rho\|_{\fA_q^0 (\R^N_+)} + \|\CS_\lambda \bv\|_{\fB_q(\R^N_+)}
	+\|\CT_\lambda h\|_{\fC_q(\R^N_+)})\\
	& \le \frac12 (\|\CR_\lambda^0 \rho\|_{\fA_q^0 (\R^N_+)} + \|\CS_\lambda \bv\|_{\fB_q(\R^N_+)}
	+\|\CT_\lambda h\|_{\fC_q(\R^N_+)}),
\end{align*}
which implies $(\rho, \bv, h) = (0, 0, 0)$.

\subsection{Proof of Lemma \ref{lem:Rbound V}}
In this subsection, we prove that there exists a positive constant $C_q \ge 11$ such that \eqref{Rbound V} by Lemma \ref{lem:FG}.
First, we prove
\begin{equation}\label{est:V1}
	\CR_{\CL(L_q(\R^N_+))}(\{(\tau \pd_\tau)^n (\nabla \CV^1(\lambda)) \mid \lambda \in \C_{+, \lambda_2}\})
	\le C_q r (C^1 (\delta_2 + M_1) + (C^2_{M_2, \eta_2} + C^3_{M_2}) \lambda_2^{-1/2})
\end{equation}
according to the definition of the $\CR$-boundedness (cf. Definition \ref{dfn2}).
Since $\nabla \CU^1$ is the linear operator from $H^2_q(\R^N_+)^N$ to $L_q(\R^N_+)^N$ by Lemma \ref{lem:FG},
it holds that for $n=0, 1$ and for any $m \in \N$, $\{\lambda_j\}_{j=1}^m \subset \C_{+, \lambda_2}$, and $\{\bG_j\}_{j=1}^m \subset \CX_q^2(\R^N_+)$,
\begin{align*}
	&\int^1_0 \Big \|\sum^m_{j=1} r_j(u) \{(\tau \pd_\tau)^n (\nabla \CV^1(\lambda)) \}|_{\lambda = \lambda_j} \bG_j \Big \|_{L_q(\R^N_+)}^q \, du\\
	&= \int^1_0 \Big \|\nabla \CU^1\left(\sum^m_{j=1}  r_j(u) \{(\tau \pd_\tau)^n \CB_1(\lambda)\}|_{\lambda = \lambda_j} \bG_j \right) \Big \|_{L_q(\R^N_+)}^q \, du\\
	&\le C_q  \Big{[} \{C^1(\delta_2 + M_1)\}^q \int^1_0 \Big \|\sum^m_{j=1}  r_j(u) \{(\tau \pd_\tau)^n \nabla^2 \CB_1(\lambda)\}|_{\lambda = \lambda_j} \bG_j \Big \|_{L_q(\R^N_+)}^q\, du \\
	&\quad + (C^2_{M_2, \eta_2} + C^3_{M_2})^q \int^1_0 \Big \|\sum^m_{j=1}  r_j(u) \{(\tau \pd_\tau)^n \nabla \CB_1(\lambda)\}|_{\lambda = \lambda_j} \bG_j \Big \|_{L_q(\R^N_+)}^q\, du \Big{]} \\
	&=: I_n^1,
\end{align*}
where $C_q$ is a positive constant depending only on $q$.
In the case $n=0$, Lemma \ref{lem:5.3} (\ref{lem:Rbound3}) and \eqref{rbdd} yield that
\begin{align*}
	I_0^1 &
	\le C_q \Big{[} \{C^1(\delta_2 + M_1)\}^q \int^1_0 \Big \|\sum^m_{j=1}  r_j(u) \nabla^2 \CB_1(\lambda_j) \bG_j \Big \|_{L_q(\R^N_+)}^q\, du \\
	&\quad + (C^2_{M_2, \eta_2} + C^3_{M_2})^q (K_q^2 \sup_{\lambda \in \C_{+, \lambda_2}} |\lambda|^{-1/2})^q \int^1_0 \Big \|\sum^m_{j=1}  r_j(u) \lambda_j^{1/2}\nabla \CB_1(\lambda_j) \bG_j \Big \|_{L_q(\R^N_+)}^q \, du \Big{]}\\
	&\le C_q r^q [\{C^1(\delta_2 + M_1)\}^q + (C^2_{M_2, \eta_2} + C^3_{M_2})^q K_q^{2q} \lambda_2^{-q/2}] \int^1_0 \Big \|\sum^m_{j=1}  r_j(u) \bG_j \Big \|_{\CX_q^2(\R^N_+)}^q\, du.
\end{align*}
In the case $n=1$, note that 
\[
	(\tau \pd_\tau) \nabla \CB_1(\lambda) 
	= \frac{1}{\lambda^{1/2}} \left\{(\tau \pd_\tau)(\lambda^{1/2} \nabla \CB_1(\lambda)) - \frac{i}{2} \frac{\tau}{\lambda}(\lambda^{1/2} \nabla \CB_1(\lambda))\right\}.
\]
Then Lemma \ref{lem:5.3} (\ref{lem:Rbound1}), (\ref{lem:Rbound3}), and \eqref{rbdd} yield that
\begin{align*}
	&\CR_{\CL(\CX_q^2(\R^N_+), L_q(\R^N_+))} \{(\tau \pd_\tau) \nabla \CB_1(\lambda) \mid \lambda \in \C_{+, \lambda_2}\}\\
	&\le r K_q^2 \left(\sup_{\lambda \in \C_{+, \lambda_2}} |\lambda|^{-1/2} + \frac12 \sup_{\lambda \in \C_{+, \lambda_2}} (|\lambda|^{-1/2} |\tau/\lambda|)\right)\\
	&\le \frac{3}{2} r K_q^2 \lambda_2^{-1/2}, 
\end{align*}
therefore
\begin{align*}
	I_1^1 &
	\le C_q r^q [\{C^1(\delta_2 + M_1)\}^q + (C^2_{M_2, \eta_2} + C^3_{M_2})^q (3/2)^q K_q^{2q} \lambda_2^{-q/2}] \int^1_0 \Big \|\sum^m_{j=1}  r_j(u) \bG_j \Big \|_{\CX_q^2(\R^N_+)}^q\, du\\
	& \le C_q r^q [\{C^1(\delta_2 + M_1)\}^q + (C^2_{M_2, \eta_2} + C^3_{M_2})^q K_q^{2q} \lambda_2^{-q/2}] \int^1_0 \Big \|\sum^m_{j=1}  r_j(u) \bG_j \Big \|_{\CX_q^2(\R^N_+)}^q\, du,
\end{align*}
where $C_q$ may change from line to line. 
Combining $I_0^1$ and $I_1^1$, we have
\begin{align*}
	&\left(\int^1_0 \Big \|\sum^m_{j=1} r_j(u) \{(\tau \pd_\tau)^n (\nabla \CV^1(\lambda)) \}|_{\lambda = \lambda_j} \bG_j \Big \|_{L_q(\R^N_+)}^q \, du \right)^{1/q}\\
	& \le C_q r \{C^1(\delta_2 + M_1) + (C^2_{M_2, \eta_2} + C^3_{M_2}) \lambda_2^{-1/2}\} \left(\int^1_0 \Big \|\sum^m_{j=1}  r_j(u) \bG_j \Big \|_{\CX_q^2(\R^N_+)}^q\, du \right)^{1/q},
\end{align*}
which implies \eqref{est:V1}.

By the same idea, we verify
\begin{equation}\label{est:V3}
	\CR_{\CL(L_q(\R^N_+))}(\{(\tau \pd_\tau)^n (\lambda^{1/2} \CV^3(\lambda)) \mid \lambda \in \C_{+, \lambda_2}\})
	\le C_q r (C^1 (\delta_2 + M_1) + (C^2_{M_2, \eta_2} + C^3_{M_2}) \lambda_2^{-1/2}).
\end{equation}
In fact, since $\lambda^{1/2} \CU^3$ is the linear operator from $H^3_q(\R^N_+) \times H^2_q(\R^N_+)^N \times H^3_q(\R^N_+)$ to $H^1_q(\R^N_+)^N$ by Lemma \ref{lem:FG},
it holds that for $n=0, 1$ and for any $m \in \N$, $\{\lambda_j\}_{j=1}^m \subset \C_{+, \lambda_2}$, and $\{\bG_j\}_{j=1}^m \subset \CX_q^2(\R^N_+)$,
\begin{align*}
	&\int^1_0 \Big \|\sum^m_{j=1} r_j(u) \{(\tau \pd_\tau)^n (\lambda^{1/2} \CV^3(\lambda)) \}|_{\lambda = \lambda_j} \bG_j \Big \|_{L_q(\R^N_+)}^q \, du\\
	&\le C_q \Big{[} \{C^1(\delta_2 + 2M_1)\}^q \Big (\int^1_0 \Big \|\sum^m_{j=1}  r_j(u) \{(\tau \pd_\tau)^n \lambda^{1/2} \nabla^2 \CA_1(\lambda)\}|_{\lambda = \lambda_j} \bG_j\Big \|_{L_q(\R^N_+)}^q\, du\\
	&\quad + \int^1_0 \Big \|\sum^m_{j=1}  r_j(u) \{(\tau \pd_\tau)^n \lambda^{1/2} \nabla \CB_1(\lambda)\}|_{\lambda = \lambda_j} \bG_j \Big \|_{L_q(\R^N_+)}^q\, du\Big)\\
	&\quad + (C^1 M_1)^q \int^1_0 \Big \|\sum^m_{j=1}  r_j(u) \{(\tau \pd_\tau)^n \lambda^{1/2} \nabla^2 \CC_1(\lambda)\}|_{\lambda = \lambda_j} \bG_j \Big \|_{L_q(\R^N_+)}^q\, du \\
	&\quad + (C^2_{M_2, \eta_2} + 2C^3_{M_2})^q \Big(\int^1_0 \Big \|\sum^m_{j=1}  r_j(u) \{(\tau \pd_\tau)^n \lambda^{1/2} \nabla \CA_1(\lambda)\}|_{\lambda = \lambda_j} \bG_j \Big \|_{L_q(\R^N_+)}^q\, du \\
	&\quad +\int^1_0 \Big \|\sum^m_{j=1}  r_j(u) \{(\tau \pd_\tau)^n \lambda^{1/2} \nabla \CC_1(\lambda)\}|_{\lambda = \lambda_j} \bG_j \Big \|_{L_q(\R^N_+)}^q\, du \Big)\Big{]} \\
	&=: I_n^3.
\end{align*}
Repeating the same calculation as $I_0^1$, we see that $I_0^3$ satisfies 
\begin{align*}
	&(I_0^3)^{1/q}\\
	&\enskip \le C_q r \{C^1(\delta_2 + 2M_1) + C^1 M_1 K_q^2 \lambda_2^{-1/2} + (C^2_{M_2, \eta_2} + 2C^3_{M_2}) K_q^2 \lambda_2^{-1/2}\}
	\left(\int^1_0 \Big \|\sum^m_{j=1}  r_j(u) \bG_j \Big \|_{\CX_q^2(\R^N_+)}^q\, du\right)^{1/q}\\
	&\enskip \le C_q r \{C^1(\delta_2 + M_1) + (C^2_{M_2, \eta_2} + C^3_{M_2}) \lambda_2^{-1/2}\} \left(\int^1_0 \Big \|\sum^m_{j=1}  r_j(u) \bG_j \Big \|_{\CX_q^2(\R^N_+)}^q\, du \right)^{1/q}.
\end{align*}
In the case $n=1$, note that
\[
	(\tau \pd_\tau) (\lambda^{1/2} \CD(\lambda)) = \frac{1}{\lambda^{1/2}}\left\{(\tau \pd_\tau) (\lambda \CD(\lambda)) - \frac{i}{2}\frac{\tau}{\lambda} (\lambda \CD(\lambda)) \right\},
\] 
where $\CD(\lambda)$ is $\nabla \CA_1(\lambda)$ or $\nabla^j \CC_1(\lambda)$ for $j=1, 2$,
then $(I_1^3)^{1/q}$ also satisfies the same estimate as $(I_0^3)^{1/q}$,
which implies that \eqref{est:V3}.
The other inequalities can be obtained by the same method as $\nabla \CV^1(\lambda)$ and $\lambda^{1/2}\CV^3(\lambda)$, then we have \eqref{Rbound V}.
According to Corollary \ref{apriori} and \eqref{rbdd}, we have
\[
	\|\CF_\lambda^2 \CV(\lambda) \bG\|_{\CX_q^2(\R^N_+)}
	\le 11 r\{C^1(\delta_2 + M_1) + (C^2_{M_2, \eta_2} + C^3_{M_2}) \lambda_2^{-1/2}\}\|\bG\|_{\CX_q^2(\R^N_+)},
\]
which implies that $C_q \ge 11$.
This completes the proof of Lemma \ref{lem:Rbound V}.

\subsection{Proof of Theorem \ref{thm:main bent}}
Let us come back to the resolvent problem in the bent-half space.
Define operators $\CH_j$ ($j=1, 2, 3, 4$) as
\begin{align*}
 \CH_1 \CF_\lambda^2 \widetilde \bF &= \CF_\lambda^2 
	(\widetilde d \circ \Phi, \bA_{-1} \widetilde \bff \circ \Phi, 
	|\bA_\Phi \bn_0| \bA_{-1}^\mathsf{T} \widetilde \bg \circ \Phi, 
	\widetilde k \circ \Phi, \widetilde \zeta \circ \Phi),\\
	\CH_2 \rho &= \rho \circ \Phi^{-1}, \enskip \CH_3 \bv = \bA_{-1}^\mathsf{T} \bv \circ \Phi^{-1}, \enskip \CH_4 h = h \circ \Phi^{-1}.
\end{align*}
Note that \eqref{assumption b}, then we observe that these operators satisfies
\begin{equation}\label{op H}
	\CH_1 \in \CL(\CX_q^2(\Omega_+), \CX_q^2(\R^N_+)), \enskip \CH_2, \CH_4 \in \CL(H^3_q(\R^N_+), H^3_q(\Omega_+)), \enskip \CH_3 \in \CL(H^2_q(\R^N_+)^N, H^2_q(\Omega_+)^N).
\end{equation}
In fact, we verify $\CH_1 \in \CL(\CX_q^2(\Omega_+), \CX_q^2(\R^N_+))$ for $\CF_\lambda^2 \widetilde \bF \in \CX_q^2(\Omega_+)$. 
Here we only consider the estimate of $\nabla_x^2 (\widetilde k \circ \Phi) = \CD_1(\nabla \widetilde k) \circ \Phi + \CD_2(\nabla^2 \widetilde k) \circ \Phi$ since the other terms can be proved by similar way,
where $\CD_1(\nabla \widetilde k) \circ \Phi$ and $\CD_2(\nabla^2 \widetilde k) \circ \Phi$ are $N \times N$ matrices whose $(i, j)^{\rm th}$ components $(\CD_1(\nabla \widetilde k) \circ \Phi)_{ij}$ and $(\CD_2(\nabla^2 \widetilde k) \circ \Phi)_{ij}$ are given by
\begin{align*}
	(\CD_1(\nabla \widetilde k) \circ \Phi)_{ij} &= \sum^N_{k=1} \frac{\pd \widetilde k}{\pd y_k}(\Phi(x)) \frac{\pd^2 \Phi_k}{\pd x_i \pd x_j}\\
	(\CD_2(\nabla^2 \widetilde k) \circ \Phi)_{ij} &= \sum^N_{k, \ell =1} \frac{\pd^2 \widetilde k}{\pd y_k \pd y_\ell}(\Phi(x)) \frac{\pd \Phi_k}{\pd x_i} \frac{\pd \Phi_\ell}{\pd x_j}.
\end{align*}
Then using \eqref{assumption b}, we see that
\[
	\|\nabla_x^2 (\widetilde k \circ \Phi) \|_{L_q(\R^N_+)} 
	\le C^1 M_1^2 \|\nabla_y^2 \widetilde k\|_{L_q(\Omega_+)} + C^3_{M_2}\lambda_2^{-1/2} \|\lambda^{1/2}\nabla_y \widetilde k\|_{L_q(\Omega_+)}
\]
for any $\lambda \in \C_{+, \lambda_2}$, where $C^1$ depends on $N$ and $q$ and $C^3_{M_2}$ depends on $M_2$, $N$, and $q$.
Similarly, the other operators $\CH_2$, $\CH_3$, and $\CH_4$ also satisfy \eqref{op H}.
Set
\begin{align*}
	\CA_2(\lambda) \widetilde \bG &= \CH_2 \CA_3(\lambda) \CH_1 \widetilde \bG,\\
	\CB_2(\lambda) \widetilde \bG &= \CH_3 \CB_3(\lambda) \CH_1 \widetilde \bG,\\
	\CC_2(\lambda) \widetilde \bG &= \CH_4 \CC_3(\lambda) \CH_1 \widetilde \bG
\end{align*}
for $\widetilde \bG \in \CX_q^2(\Omega_+)$.
Thanks to \eqref{rbdd half} and \eqref{op H},
we observe that $\CA_2(\lambda)$, $\CB_2(\lambda)$, and $\CC_2(\lambda)$ are the solution operators for \eqref{r1} and satisfies \eqref{rbdd bent}.
The uniqueness of solutions of \eqref{r1} follows from the uniqueness of solutions of \eqref{r3} proved in the last part of Section \ref{sec:rbdd half}. 
This completes the proof of Theorem \ref{thm:main bent}.

\section{Problem in the general domain}\label{sec.general1}
In this section, we prove the following theorem.
\begin{thm}\label{thm:main general}
Let $1<q<\infty$, and let $\Omega$ be a uniform $C^3$-domain. 
Assume that $\gamma_j$ $(j =1, 2, 3, 4)$ satisfies Assumption \ref{assumption gamma}.
Then there exists a constant $\lambda_3 \ge 1$ such that the following assertions hold true:

\begin{enumerate}
\item
For any $\lambda \in \C_{+, \lambda_3}$ there exist operators  
\begin{align*}
&\CA_4 (\lambda) \in 
{\rm Hol} (\C_{+, \lambda_3}, 
\CL(\CX^2_q(\Omega), H^3_q(\Omega)))\\
&\CB_4 (\lambda) \in 
{\rm Hol} (\C_{+, \lambda_3}, 
\CL(\CX^2_q(\Omega), H^2_q(\Omega)^N)),\\
&\CC_4 (\lambda) \in 
{\rm Hol} (\C_{+, \lambda_3}, 
\CL(\CX^2_q(\Omega), H^3_q(\Omega)))
\end{align*}
such that 
for any $\bF=(d, \bff, \bg, k, \zeta) \in X_q^0(\Omega)$, 
\begin{equation*}
\rho = \CA_4 (\lambda) \CF^2_\lambda \bF, \quad
\bu = \CB_4 (\lambda) \CF^2_\lambda \bF, \quad
h = \CC_4 (\lambda) \CF^2_\lambda \bF
\end{equation*}
are solutions of problem \eqref{main prob}.

\item
There exists a positive constant $r$ such that
\begin{equation} \label{rbdd general}
\begin{aligned}
&\CR_{\CL(\CX^2_q(\Omega), \fA_q^0 (\Omega))}
(\{(\tau \pd_\tau)^n \CR^0_\lambda \CA_4 (\lambda) \mid 
\lambda \in \C_{+, \lambda_3}\}) 
\leq r,\\
&\CR_{\CL(\CX^2_q(\Omega), \fB_q(\Omega))}
(\{(\tau \pd_\tau)^n \CS_\lambda \CB_4 (\lambda) \mid 
\lambda \in \C_{+, \lambda_3}\}) 
\leq r,\\
&\CR_{\CL(\CX^2_q(\Omega), \fC_q(\Omega))}
(\{(\tau \pd_\tau)^n \CT_\lambda \CC_4 (\lambda) \mid 
\lambda \in \C_{+, \lambda_3}\}) 
\leq r
\end{aligned}
\end{equation}
for $n = 0, 1$.
Here, above constant $r$ depend solely on $N$, $q$, $L$, $\gamma_*$ and $\gamma^*$. 
\end{enumerate}
\end{thm}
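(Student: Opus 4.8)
The plan is to reduce the resolvent problem \eqref{main prob} on the uniform $C^3$-domain $\Omega$ to the whole-space problem \eqref{whole} and the bent half-space problem \eqref{r1}, both of which are already under control by Proposition \ref{prop:w} and Theorem \ref{thm:main bent}, and then to absorb the resulting remainder operators by a Neumann series carried out inside the class of $\CR$-bounded operator families, exactly in the spirit of Subsection \ref{sec:rbdd half}. For the localization, Definition \ref{def:general domain} furnishes a countable open covering $\{B_{a_2/2}(x^j)\}_{j\in\N}$ of $\overline\Omega$ with the bounded-overlap property (there is $N_0=N_0(N)$ such that every point of $\overline\Omega$ belongs to at most $N_0$ of the enlarged balls $B_{a_2}(x^j)$), each $x^j$ being either interior, with $B_{a_2}(x^j)\subset\Omega$, or on $\Gamma$, in which case, after a rigid motion, $\Omega\cap B_{a_2}(x^j)$ agrees near $x^j$ with a bent half-space $\Omega^j_+=\Phi_j(\R^N_+)$ whose chart $\Phi_j$ obeys \eqref{assumption b} with $M_1$ as small as we please once $a_1$ is small (depending on the domain constant $K$). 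One builds the associated partition of unity $\{\zeta_j\}$ with $\sum_j\zeta_j\equiv1$ on $\overline\Omega$ and companion cut-offs $\widetilde\zeta_j\in C^\infty_0(B_{a_2}(x^j))$, equal to $1$ on $\operatorname{supp}\zeta_j$, with derivative bounds uniform in $j$. The coefficients $\gamma_i$ are extended off $\overline\Omega$ with the same Lipschitz constant $L$ and the same two-sided bounds, then modified away from $x^j$ so that the localized coefficient $\gamma_i^{(j)}$ coincides with $\gamma_i$ on $B_{a_2}(x^j)$ and satisfies $\sup|\gamma_i^{(j)}-\gamma_i^0|\le La_1$, $\gamma_i^0:=\gamma_i(x^j)$; at $x^j\in\Gamma$, Assumption \ref{assumption gamma}\,(3) is precisely hypothesis (\ref{condi:gamma1}) of Theorem \ref{thm:main bent}. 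Fixing $a_1$ small enough that $La_1\le\min(\delta_1,\delta_2)$, Proposition \ref{prop:w} (interior $j$, with $\eta_1=L$) and Theorem \ref{thm:main bent} (boundary $j$, with $\eta_2=L$) provide local solution operators $(\CA^{(j)},\CB^{(j)},\CC^{(j)})$ on a common half-plane $\C_{+,\lambda_2}$ with $\CR$-bounds $\le r$ uniform in $j$.

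Next I would assemble the parametrix and estimate its error. For data $\bF=(d,\bff,\bg,k,\zeta)\in X^0_q(\Omega)$, solve the $j$-th local problem (whole-space or bent half-space) with the data localized by $\zeta_j$ and set $\rho=\sum_j\widetilde\zeta_j\rho_j$, $\bu=\sum_j\widetilde\zeta_j\bu_j$, $h=\sum_j\widetilde\zeta_j h_j$; this defines operators acting on $\CF^2_\lambda\bF\in\CX^2_q(\Omega)$ (the passage $\bF\mapsto\zeta_j\bF$ is re-expressed at the processed-data level, producing extra strictly lower-order terms handled like the commutators below). Plugging $(\rho,\bu,h)$ into \eqref{main prob} and using $\widetilde\zeta_j\zeta_j=\zeta_j$, $\sum_j\zeta_j=1$, and $\gamma_i^{(j)}=\gamma_i$ on $\operatorname{supp}\widetilde\zeta_j$, the equations hold up to a remainder which, in the processed-data variables, defines an operator $\CS(\lambda)$ on $\CX^2_q(\Omega)$ made only of commutators of the differential operators of \eqref{main prob} with $\widetilde\zeta_j$, applied to the local solutions, together with the lower-order data-localization terms; each such term carries one fewer derivative than the top-order entry of $\CR^0_\lambda\rho$, $\CS_\lambda\bu$, $\CT_\lambda h$ it came from, hence by the a priori bounds is smaller by a factor $\lambda_3^{-1/2}$ on $\C_{+,\lambda_3}$. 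The key point — the analogue for $\Omega$ of Lemma \ref{lem:Rbound V} — is that, despite the infinitely many charts, the sum over $j$ of these localized operators is again $\CR$-bounded, with bound governed by $\sup_j$ of the local $\CR$-bounds times a constant depending only on $N_0$ and the cut-off bounds; this follows from Lemma \ref{lem:5.3}, Khintchine's inequality inside Definition \ref{dfn2}, and the overlap inequalities $\sum_j\|\widetilde\zeta_j f\|_{L_q(\Omega)}^q\le N_0\|f\|_{L_q(\Omega)}^q$ and $\|\sum_j\widetilde\zeta_j f_j\|_{L_q(\Omega)}^q\le N_0^{q-1}\sum_j\|\widetilde\zeta_j f_j\|_{L_q(\Omega)}^q$. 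One thus obtains, for $n=0,1$, that the $\CR$-bound of $\{(\tau\pd_\tau)^n\CF^2_\lambda\CS(\lambda)\mid\lambda\in\C_{+,\lambda_3}\}$ on $\CL(\CX^2_q(\Omega))$ is $\le C\,r\,\lambda_3^{-1/2}$ with $C$ depending only on $N$, $q$, $L$, $\gamma_*$, $\gamma^*$ (through the fixed covering scale and overlap constant), hence $\le1/2$ for $\lambda_3$ large. Consequently $(\bI-\CF^2_\lambda\CS(\lambda))^{-1}$ exists with $\CR$-bound $\le2$, and composing the parametrix with $(\bI-\CF^2_\lambda\CS(\lambda))^{-1}$ yields operators $\CA_4(\lambda),\CB_4(\lambda),\CC_4(\lambda)$ for which $\rho=\CA_4(\lambda)\CF^2_\lambda\bF$, $\bu=\CB_4(\lambda)\CF^2_\lambda\bF$, $h=\CC_4(\lambda)\CF^2_\lambda\bF$ solve \eqref{main prob}; their $\CR$-bounds follow from Lemma \ref{lem:5.3} (\ref{lem:Rbound1}), (\ref{lem:Rbound2}) and Theorem \ref{thm:main bent}, and holomorphy on $\C_{+,\lambda_3}$ is inherited through the Neumann series. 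Since the theorem asserts existence but not uniqueness, this completes the construction.

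The main obstacle is the summation / bounded-overlap step: one must show that the infinite sum $\sum_j\widetilde\zeta_j T_j(\lambda)(\zeta_j\,\cdot\,)$ of the uniformly $\CR$-bounded local families is $\CR$-bounded into $\fA^0_q(\Omega)$, $\fB_q(\Omega)$, and $\fC_q(\Omega)$ with a bound independent of the (infinite) number of charts, and that the commutator terms genuinely fall into strictly lower-order norms. The delicate sub-point concerns the density slot: $\fA^0_q$ records $\nabla^3\rho$, $\lambda^{1/2}\nabla^2\rho$, $\lambda\nabla\rho$ and $\lambda^{3/2}\rho$, so the commutators acting on $\rho$ must be absorbed into the lower entries of this list; this is exactly why the scheme is fed with $\CF^2_\lambda$ (carrying the extra datum $\lambda^{1/2}d$) and why the removal of that datum is deferred to Section \ref{sec:general2}. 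The construction of the partition of unity with uniformly bounded derivatives and bounded overlap for a general uniform $C^3$-domain, although standard (cf. \cite{S2020,IS}), together with the bookkeeping needed to confirm that the final constant $r$ depends only on $N$, $q$, $L$, $\gamma_*$, $\gamma^*$, are the remaining points requiring care.
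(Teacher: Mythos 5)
Your proposal is correct and follows essentially the same route as the paper: localization via a uniform covering with bounded overlap, local model problems in the whole space and bent half-spaces with $j$-uniform $\CR$-bounds, a partition-of-unity parametrix whose remainder consists of lower-order commutators gaining a factor $\lambda_3^{-1/2}$ (exactly because $\CF^2_\lambda$ and $\fA^0_q$ carry the extra $\lambda^{1/2}$ weights), and inversion by a Neumann series within the class of $\CR$-bounded families. The only cosmetic differences are that the paper sums the infinite series via duality lemmas (its Lemmas \ref{lem:p1}--\ref{lem:p2}) rather than your direct overlap inequalities, and that it additionally records a duality-based uniqueness argument here for later use, which the stated theorem does not require.
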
 
\subsection{Preliminaries}
First, we state several properties of uniform $C^3$-domain (cf. \cite[Proposition 6.1]{ES}).
\begin{prop}\label{general domain}
Let $\Omega$ be a uniform $C^3$-domain in $\R^N$ and $M_1$ be the number given in Theorem \ref{thm:main bent}. 
Then there exist four constants $M_2$, $M_3 \ge 1$, $d^1$, $d^2 \in (0, 1)$, depending on $M_1$ at most countably
many $N$-vectors of $C^1$ functions of $\Phi^i_j~(i=1, 2, j \in \N)$ defined on $\BR^N$, and points $x_j^1\in \Omega$, $x_j^2\in \Gamma~(j\in \BN)$ such that, for $i = 1, 2$ and $j \in \N$, the following assertions hold true:
\begin{enumerate}
\item 
The maps $\Phi_j^i : \R^N \ni x \mapsto \Phi_j^i(x) \in \R^N$ are bijective so that $\nabla \Phi_j^i = \bA^i_j + \bB^i_j (x)$ 
and $\nabla (\Phi_j^i)^{-1} = \bA^i_{j,-1} + \bB^i_{j,-1} (x)$ where $\bA^i_j$, $\bA^i_{j,-1}$ are $N\times N$ constant orthonormal matrices
and $\bB^i_j (x)$ and $\bB^i_{j,-1} (x)$ are $N\times N$ matrices of $C^2$ functions defined on $\R^N$, which satisfy
\begin{align*}
	\|(\bB^i_j, \bB^i_{j,-1})\|_{L_{\infty}(\BR^N)}\leq M_1,\quad \|(\nabla\bB^i_j, \nabla\bB^i_{j,-1})\|_{H^1_{\infty}(\R^N)}\leq M_2.
\end{align*}
\item
$\Omega = \left(\bigcup_{j=1}^\infty B_{d^{1}}(x_j^1) \right) \cup \left(\bigcup_{j=1}^\infty \left(\Phi_j^2(\R_+^N)\cap B_{d^{2}}(x_j^2)\right)\right)$,
$B_{d^{1}}(x^1_j) \subset \Omega$, and 
\begin{align*}
	\Phi_j^2(\R_+^N)\cap B_{d^{2}}(x_j^2) &= \Omega \cap B_{d^{2}}(x_j^2)\quad (j\in \N),\\
	\Phi_j^2(\R_0^N)\cap B_{d^{2}}(x_j^2) &= \Gamma \cap B_{d^{2}}(x_j^2)\quad (j\in \N).
\end{align*}
\item \label{g-3}
There exist $C^\infty$ functions $\xi^i_j$, $\widetilde \xi^i_j$ ~$(i=1, 2, ~j \in \N)$ defined on $\R^N$ such that 
\begin{align*}
	& 0\le \xi^i_j, \widetilde \xi^i_j \le 1, \quad {\rm supp}~\xi^i_j, {\rm supp}~\widetilde \xi^i_j
\subset B_{d^1}(x^i_j), \quad \|(\nabla \xi^i_j, \nabla \widetilde \xi^i_j)\|_{H^2_\infty(\R^N)} \le M_3,\\
	&\widetilde \xi^i_j =1 \text{~on~} {\rm supp}~\xi^i_j, \quad \sum^2_{i=1}\sum^\infty_{j=1} \xi^i_j = 1 \text{~on~} \overline{\Omega}, \quad \sum^\infty_{j=1} \xi^2_j = 1 \text{~on~} \Gamma.
\end{align*}
\item \label{g-4}
There exists a natural number $N_0 \ge 2$ such that any $N_0 + 1$ distinct sets of $\{B_{d^{i}}(x_j^i)\mid i = 1, 2, j \in \N\}$. have an empty intersection.

\end{enumerate}
\end{prop}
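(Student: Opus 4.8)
The plan is to carry out, in a single pass over $\Gamma$ and over the interior of $\Omega$, the four constructions demanded by the statement — the flattening charts $\Phi^i_j$, the covering balls, the subordinate partition of unity, and the uniform covering multiplicity — essentially as in \cite[Proposition 6.1]{ES}. The constants must be fixed in a definite cascade: $M_1$ is given (it is the number from Theorem \ref{thm:main bent}), then a localization scale $\delta = \delta(N,K,M_1)$, then $M_2 = M_2(N,K,M_1)$, then the boundary radius $d^2 = d^2(N,K,a_1,a_2,M_1)$, then the interior radius $d^1 < d^2/4$, then the overlap constant $N_0 = N_0(N, d^2/d^1)$, and finally $M_3 = M_3(N, d^1, N_0)$.

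The substantive step is producing the boundary charts, because the graph functions $h$ furnished by Definition \ref{def:general domain} only satisfy $\|h\|_{H^3_\infty} \le K$, so $\|\nabla' h\|_{L_\infty}$ may be as large as $K$, whereas Theorem \ref{thm:main bent} needs $\|\bB\|_{L_\infty(\R^N)}\le M_1$ with $M_1$ small. Fix $x_0\in\Gamma$ with data $j,h,a_1,a_2,K$. First I would rotate and translate: let $\bn_0$ be the inner unit normal at $x_0$ — the normalization of $\nabla(x_j - h(x'))$ at $x_0$ — and let $\bA_0$ be a constant orthonormal matrix with $\bA_0 e_N = \bn_0$; in the coordinates $x = x_0 + \bA_0 y$ the implicit function theorem applied to $G(y) = [x_0+\bA_0 y]_j - h([x_0+\bA_0 y]')$ (a $C^3$ map with $C^3$-norm controlled by $K$ and with $\partial_{y_N}G(0) = \bn_0\cdot\nabla(x_j-h)|_{x_0}$ bounded below in terms of $K$) yields a $C^3$ function $\psi$ on a ball $B'_a(0)\subset\R^{N-1}$, $a = a(N,K,a_1,a_2)>0$, with $\Omega = \{y_N>\psi(y')\}$ and $\Gamma = \{y_N=\psi(y')\}$ near $x_0$, $\psi(0)=0$, $\nabla'\psi(0)=0$ (the tangent plane is $\{y_N=0\}$ by the choice of $\bA_0$), and $\|\psi\|_{H^3_\infty(B'_a(0))}\le C_{N,K}$. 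Next I would localize: pick $\chi\in C^\infty_0(B'_1(0))$ with $\chi\equiv1$ on $B'_{1/2}(0)$, put $\psi_\delta(y') = \chi(y'/\delta)\psi(y')$, and use $|\psi(y')|\le C_{N,K}|y'|^2$, $|\nabla'\psi(y')|\le C_{N,K}|y'|$ (from $\psi(0)=0$, $\nabla'\psi(0)=0$ and the Hessian bound) to get $\|\nabla'\psi_\delta\|_{L_\infty(\R^{N-1})}\le C_{N,K}\delta$ and $\|\psi_\delta\|_{H^3_\infty(\R^{N-1})}\le C_{N,K}\delta^{-1}$. Fixing $\delta$ so small that $C_{N,K}\delta\le M_1$ and setting $\Phi^2_{x_0}(z) = x_0 + \bA_0(z', z_N + \psi_\delta(z'))$, one gets a $C^3$ bijection of $\R^N$ with $\nabla\Phi^2_{x_0} = \bA_0 + \bA_0\bB$, where $\bB$ has only last-row entries $\partial_i\psi_\delta$, so $\|\bB\|_{L_\infty} = \|\nabla'\psi_\delta\|_{L_\infty}\le M_1$ and $\|\nabla\bB\|_{H^1_\infty}\le\|\psi_\delta\|_{H^3_\infty}\le M_2$; the explicit inverse gives the same decomposition and bounds for $\nabla(\Phi^2_{x_0})^{-1}$. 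Since $\psi_\delta = \psi$ on $B'_{\delta/2}(0)$, taking $d^2$ small yields $\Phi^2_{x_0}(\R^N_+)\cap B_{d^2}(x_0) = \Omega\cap B_{d^2}(x_0)$ and $\Phi^2_{x_0}(\R^N_0)\cap B_{d^2}(x_0) = \Gamma\cap B_{d^2}(x_0)$.

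For the centers and the covering, since $\overline\Omega$ is separable I would take a countable set $\{x^2_j\}\subset\Gamma$ maximal with respect to being $(d^1/2)$-separated, and a countable set $\{x^1_j\}\subset\{x\in\Omega:\mathrm{dist}(x,\Gamma)\ge 2d^1\}$ maximal $(d^1/2)$-separated there; set $\Phi^2_j = \Phi^2_{x^2_j}$ and let $\Phi^1_j$ be the identity (so $\bB^1_j\equiv0$). By maximality the balls $B_{d^1/2}(x^i_j)$ cover $\overline\Omega$; $B_{d^1}(x^1_j)\subset\Omega$ because $\mathrm{dist}(x^1_j,\Gamma)\ge 2d^1$; and if $d^1<d^2/4$ then every $x\in\Omega$ with $\mathrm{dist}(x,\Gamma)<2d^1$ lies in some $B_{d^2}(x^2_j)$ (take $z\in\Gamma$ with $|x-z|<2d^1$ and $x^2_j$ with $|z-x^2_j|<d^1/2$), which together with the interior balls and the chart identity gives assertion (2). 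For the multiplicity bound (4): a fixed point $x$ can lie in $B_{d^i}(x^i_j)$ only when $x^i_j\in B_{d^2}(x)$, and the disjoint balls $B_{d^1/4}(x^i_j)$ over those centers all sit inside $B_{d^2+d^1/4}(x)$, so a volume comparison bounds their number by $N_0 = (4d^2/d^1+1)^N$, which we enlarge so that $N_0\ge2$.

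For the partition of unity I would use the usual mollified construction: fix $\omega\in C^\infty_0(B_{3/4}(0))$ with $\omega\equiv1$ on $B_{1/2}(0)$ and $\widetilde\omega\in C^\infty_0(B_1(0))$ with $\widetilde\omega\equiv1$ on $B_{3/4}(0)$; set $\eta^i_j(x) = \omega((x-x^i_j)/d^1)$, $\Sigma = \sum_{i,j}\eta^i_j$, $\xi^i_j = \eta^i_j/\Sigma$, $\widetilde\xi^i_j(x) = \widetilde\omega((x-x^i_j)/d^1)$. Then $\Sigma\ge1$ on $\overline\Omega$ (from the $B_{d^1/2}$-cover) and $\Sigma$ is locally a sum of at most $N_0$ terms, so one reads off $0\le\xi^i_j,\widetilde\xi^i_j\le1$, $\mathrm{supp}\,\xi^i_j\subset\overline{B_{3d^1/4}(x^i_j)}\subset B_{d^1}(x^i_j)$, $\widetilde\xi^i_j\equiv1$ on $B_{3d^1/4}(x^i_j)\supset\mathrm{supp}\,\xi^i_j$, $\sum_{i,j}\xi^i_j\equiv1$ on $\overline\Omega$, and — since $B_{d^1}(x^1_j)\cap\Gamma=\emptyset$ — $\sum_j\xi^2_j\equiv1$ on $\Gamma$; the quotient rule together with $\Sigma\ge1$, the $N_0$-fold locality, and $\|\nabla^k(\omega(\cdot/d^1))\|_\infty\lesssim (d^1)^{-k}$ for $k\le3$ gives $\|(\nabla\xi^i_j,\nabla\widetilde\xi^i_j)\|_{H^2_\infty(\R^N)}\le M_3 = M_3(N,d^1,N_0)$ (extend $\xi^i_j,\widetilde\xi^i_j$ arbitrarily off a neighborhood of $\overline\Omega$, which is harmless since all the identities are asserted only on $\overline\Omega$ or on $\Gamma$).

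The only real difficulty, as indicated, is the chart step: neutralizing the possibly large $\|\nabla' h\|_{L_\infty}$ by rotating onto the genuine tangent plane and then localizing at the scale $\delta(N,K,M_1)$ so that $\|\bB\|_{L_\infty}\le M_1$, all while retaining $H^3$-control — the price being the $\delta^{-1}$ loss, which is exactly why $M_2$, and through $d^1$ also $M_3$, come out depending on $M_1$ (and on the domain). The rest is bookkeeping, but the order of the choices ($M_1$; then $\delta, M_2$; then $d^2$; then $d^1<d^2/4$; then $N_0, M_3$) must be respected so that the chart identities on $B_{d^2}$, the covering of the collar $\{0<\mathrm{dist}(\cdot,\Gamma)<2d^1\}$, and the vanishing of the interior cutoffs on $\Gamma$ all hold at once.
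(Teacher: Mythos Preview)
The paper does not prove this proposition; it simply cites \cite[Proposition 6.1]{ES}. Your outline is essentially the standard argument behind that citation --- the rotation onto the tangent plane followed by localization at scale $\delta=\delta(N,K,M_1)$ is exactly the mechanism by which one trades the graph bound $\|h\|_{H^3_\infty}\le K$ for the small-perturbation bound $\|\bB\|_{L_\infty}\le M_1$ at the price of $\|\nabla\bB\|_{H^1_\infty}\le M_2\sim\delta^{-1}$, and the remaining steps (maximal separated nets, volume multiplicity, subordinate cutoffs) are routine.

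There is, however, one genuine slip in your covering argument. The claim ``by maximality the balls $B_{d^1/2}(x^i_j)$ cover $\overline\Omega$'' is false with your choice of centers: a maximal $(d^1/2)$-separated set in $\Gamma$ gives $B_{d^1/2}(x^2_j)$ covering only $\Gamma$, and a maximal $(d^1/2)$-separated set in $\{\mathrm{dist}(\cdot,\Gamma)\ge 2d^1\}$ gives $B_{d^1/2}(x^1_j)$ covering only that deep interior. The collar $\{0<\mathrm{dist}(\cdot,\Gamma)<2d^1\}$ is covered by neither family of $B_{d^1/2}$-balls, so $\Sigma\ge 1$ on $\overline\Omega$ is not established and $\xi^i_j=\eta^i_j/\Sigma$ is undefined on part of the collar. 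You correctly note that the collar \emph{is} covered by the larger balls $B_{d^2}(x^2_j)$, but your cutoffs $\eta^2_j$ are scaled to $d^1$, not $d^2$. The fix is to scale the boundary cutoffs to $d^2$: with $\eta^2_j(x)=\omega((x-x^2_j)/d^2)$ and your constraint $d^1<d^2/4$, every point of the collar lies in some $B_{d^2/2}(x^2_j)$ and $\Sigma\ge1$ follows. (The statement's ``$\mathrm{supp}\,\xi^i_j\subset B_{d^1}(x^i_j)$'' for both $i$ is almost certainly a typo for $B_{d^i}(x^i_j)$, consistent with the paper's subsequent convention $B^i_j=B_{d^i}(x^i_j)$ used throughout Section~\ref{sec.general1}.) A smaller point: in your multiplicity bound, the balls $B_{d^1/4}(x^i_j)$ are disjoint only within each family $i=1,2$, so the volume argument gives the bound for each $i$ separately and $N_0$ should carry an extra factor of $2$.
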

Hereafter, we denote $B^i_j = B_{d^i}(x^i_j)$ for simplicity.
Let us give some comments on Proposition \ref{general domain}.
\begin{remark}
\begin{enumerate}
\item
Set $\gamma \in \{\gamma_1, \gamma_2, \gamma_3, \gamma_4\}$.
\begin{equation}\label{lip}
\begin{aligned}
	&|\gamma (x) - \gamma(x^1_j)| \le \min \{\delta_1, \delta_2\} \quad \text{ for any } x \in B^1_j,\\
	&|\gamma (x) - \gamma(x^2_j)| \le \min \{\delta_1, \delta_2\} \quad \text{ for any } x \in \overline{\Omega} \cap B^2_j,
\end{aligned}
\end{equation}
where $\delta_1$ and $\delta_2$ are constants given in Proposition \ref{prop:w} and Proposition \ref{main'}, respectively.
\begin{equation}\label{est:normal}
	\|\bn_j\|_{L_\infty(\R^N)} \le 1, \quad \|\nabla \bn_j\|_{H^1_\infty(\R^N)} \le C_{M_2}
\end{equation}
for some positive constant $C_{M_2}$ independent of $j$.
\item
Let $\bn_j$ be the unit outer normal to $\Gamma_j$, and let $\Delta_{\Gamma_j}$ be the Laplace-Beltrami operator on $\Gamma_j$.
Note that
\begin{equation}\label{normal}
\bn = \bn_j \quad \Delta_{\Gamma} = \Delta_{\Gamma_j} \quad \text{ on } \Gamma \cap B^2_j
\end{equation}
for $j \in \N$.
\item
\begin{equation}\label{intersect}
	\left(\sum^2_{i=1}\sum^\infty_{j=1} \|f\|_{L_r(\Omega \cap B^i_j)}^r\right)^{1/r} \le M_4 \|f\|_{L_r(\Omega)}
\end{equation}
for any $f \in L_r(\Omega)$.
\end{enumerate}
\end{remark}

Next, we prepare some lemmas used to construct a parametrix.
The first lemma was proved in \cite[Proposition 9.5.2]{S2016} and the second lemma was introduced in \cite[Proposition 5.2 (ii)]{S2014}.
\begin{lem}\label{lem:p1}
Let $X$ be a Banach space and $X^*$ its dual space, while $\|\cdot\|_X$, $\|\cdot\|_{X^*}$, and $\langle \cdot, \cdot \rangle$ be the norm of $X$, the norm of $X^*$,
and the duality pairing between $X$ and $X^*$, respectively.
Let $m \in \N$, $\ell = 1, \dots, m$, and $\{a_\ell\}_{\ell=1}^m \subset \C$, and let $\{f^\ell_j\}_{j=1}^\infty$ be sequences in $X^*$ and $\{g^\ell_j\}_{j=1}^\infty$, $\{h_j\}_{j=1}^\infty$
be sequences of positive numbers.
Assume that there exist map $\CN_j : X \to [0, \infty)$ such that
\[
	|\langle f^\ell_j. \vp \rangle| \le M_5 g^\ell_j \CN_j(\vp)~(\ell = 1, \dots, m), 
	\quad \left|\langle \sum^m_{\ell=1} a_\ell f^\ell_j, \vp \rangle \right| \le M_5 h_j \CN_j(\vp)
\]
for any $\vp \in X$ with some positive constant $M_5$ independent of $j \in \N$ and $\ell = 1, \dots, m$. If
\[
	\sum^\infty_{j=1} (g^\ell_j)^q < \infty, \quad \sum^\infty_{j=1} (h_j)^q < \infty, \quad \sum^\infty_{j=1} (\CN_j(\vp))^{q'} \le (M_6 \|\vp\|_X)^{q'}
\]
with $1<q<\infty$ and $q'=q/(q-1)$ for some positive constant $M_6$, then infinite sum $f^\ell = \sum^\infty_{j=1} f^\ell_j$ exists in the strong topology of $X^*$ and 
\[
	\|f^\ell\|_{X^*} \le M_5 M_6 \left(\sum^\infty_{j=1} (g^\ell_j)^q\right)^{1/q},\quad
	\left\|\sum^m_{\ell=1}a_\ell f^\ell \right\|_{X^*} \le M_5 M_6 \left(\sum^\infty_{j=1} (h_j)^q\right)^{1/q}.
\]
\end{lem}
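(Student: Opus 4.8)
The statement is purely functional-analytic, and the natural route is: estimate the finite partial sums $S^\ell_n := \sum_{j=1}^n f^\ell_j$ uniformly in $n$ by testing against an arbitrary $\vp \in X$ and applying H\"older's inequality, deduce that $(S^\ell_n)_n$ is Cauchy in the (automatically complete) dual space $X^*$, pass to the norm limit $f^\ell$, and finally read off the bounds for $f^\ell$ and for $\sum_{\ell=1}^m a_\ell f^\ell$ from continuity of the pairing. No approximation or compactness is required; the two summability hypotheses $\sum_j(g^\ell_j)^q<\infty$ and $\sum_j(h_j)^q<\infty$ are used on the two separate estimates furnished in the hypothesis — the first for $f^\ell$ itself, the second for the combination $\sum_\ell a_\ell f^\ell$, which is the only place where the asserted cancellation inside $\sum_\ell a_\ell f^\ell$ is exploited.

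\textbf{Key estimate and convergence.} For any $\vp \in X$, any $\ell\in\{1,\dots,m\}$, and any indices $0\le n\le n'$, the hypothesis together with H\"older's inequality (exponents $q$, $q'$, $1/q+1/q'=1$) gives
\begin{align*}
 \Big|\Big\langle \sum_{j=n+1}^{n'} f^\ell_j,\ \vp\Big\rangle\Big|
 &\le \sum_{j=n+1}^{n'} |\langle f^\ell_j, \vp\rangle|
 \le M_5 \sum_{j=n+1}^{n'} g^\ell_j\,\CN_j(\vp)\\
 &\le M_5\Big(\sum_{j=n+1}^{n'} (g^\ell_j)^q\Big)^{1/q}\Big(\sum_{j=n+1}^{n'} (\CN_j(\vp))^{q'}\Big)^{1/q'}
 \le M_5 M_6\,\|\vp\|_X\,\Big(\sum_{j=n+1}^{n'} (g^\ell_j)^q\Big)^{1/q},
\end{align*}
where the last inequality uses $\sum_{j=n+1}^{n'}(\CN_j(\vp))^{q'}\le\sum_{j=1}^\infty(\CN_j(\vp))^{q'}\le(M_6\|\vp\|_X)^{q'}$. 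Taking the supremum over $\|\vp\|_X\le1$ yields $\big\|\sum_{j=n+1}^{n'} f^\ell_j\big\|_{X^*}\le M_5M_6\big(\sum_{j=n+1}^{n'}(g^\ell_j)^q\big)^{1/q}$. With $n=0$ this bounds $\|S^\ell_{n'}\|_{X^*}$ uniformly in $n'$ by $M_5M_6(\sum_{j=1}^\infty(g^\ell_j)^q)^{1/q}$; letting $n,n'\to\infty$ and using $\sum_j(g^\ell_j)^q<\infty$ shows $(S^\ell_n)_n$ is a Cauchy sequence. Since $X^*$ is complete (it is a dual space), $S^\ell_n\to f^\ell$ in the norm (strong) topology of $X^*$, so the infinite sum exists as claimed.

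\textbf{The bounds.} For $\vp\in X$, continuity of the duality pairing gives $\langle f^\ell,\vp\rangle=\lim_n\langle S^\ell_n,\vp\rangle$, and passing to the limit in the estimate above ($n=0$) yields $|\langle f^\ell,\vp\rangle|\le M_5M_6(\sum_j(g^\ell_j)^q)^{1/q}\|\vp\|_X$; taking $\sup_{\|\vp\|_X\le1}$ gives the first claimed bound. For the combination, $\sum_{\ell=1}^m a_\ell f^\ell=\lim_n\sum_{j=1}^n\big(\sum_{\ell=1}^m a_\ell f^\ell_j\big)$ (a finite linear combination of norm-convergent series), and the very same H\"older argument applied to $\big|\big\langle\sum_{j=1}^n(\sum_{\ell=1}^m a_\ell f^\ell_j),\vp\big\rangle\big|\le M_5\sum_{j=1}^n h_j\,\CN_j(\vp)$ gives, in the limit, $\big\|\sum_{\ell=1}^m a_\ell f^\ell\big\|_{X^*}\le M_5M_6(\sum_j(h_j)^q)^{1/q}$. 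There is no genuine obstacle here: the only points deserving a word of care are that the overlap bound on $\sum_j(\CN_j(\vp))^{q'}$ may be freely applied to partial and tail sums (all terms being nonnegative) and that one must first secure convergence in $X^*$ before estimating the limit — the whole argument is just H\"older's inequality combined with the completeness of $X^*$.
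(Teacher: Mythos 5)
Your proof is correct and is exactly the standard argument: the paper itself does not reprove this lemma but cites \cite[Proposition 9.5.2]{S2016}, whose proof is precisely the H\"older-plus-completeness-of-$X^*$ route you take (uniform tail estimate on partial sums, Cauchy in $X^*$, pass to the limit in the pairing for both bounds). Nothing is missing; the nonnegativity remark justifying the tail bound on $\sum_j(\CN_j(\vp))^{q'}$ and the separate use of the $h_j$-hypothesis for the combination $\sum_\ell a_\ell f^\ell$ are the right points of care.
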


\begin{lem}\label{lem:p2}
Let $1< q < \infty$, $q'=q/(q-1)$, and $i=1, 2$. 
Let $m \in \N_0$, $\{f_j\}_{j=1}^\infty$ be a sequence in $H^m_q(\Omega)$, and let $\{g^\ell_j\}_{j=1}^\infty$
be a sequence of positive numbers.
Assume that 
\[
	\sum^\infty_{j=1} (g^\ell_j)^q < \infty, \quad |(\nabla^\ell f_j, \vp)_\Omega| \le M_7 g^\ell_j \|\vp\|_{L_{q'}(\Omega \cap B^i_j)}
\]
for any $\vp \in L_{q'}(\Omega)$ with some positive constant $M_7$ independent of $j\in \N$ and $\ell = 0, 1, \dots, m$.
Then infinite sum $f = \sum^\infty_{j=1} f_j$ exists in the strong topology of $H^m_q(\Omega)$ and
\[
	\|\nabla^\ell f\|_{L_q(\Omega)} \le C_{q, M_4} M_7 \left(\sum^\infty_{j=1} (g^\ell_j)^q\right)^{1/q}
\]
with some positive constant $C_{q, M_4}$.
\end{lem}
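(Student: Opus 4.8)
\textbf{Proof proposal for Lemma \ref{lem:p2}.}
The plan is to deduce the convergence of $f=\sum_{j=1}^\infty f_j$ in $H^m_q(\Omega)$ directly from Lemma \ref{lem:p1} by choosing the Banach space, the duality, and the auxiliary maps $\CN_j$ appropriately. Concretely, I would take $X=L_{q'}(\Omega)$, so that $X^*=L_q(\Omega)$ with the usual pairing $(\cdot,\cdot)_\Omega$, and for each $\ell=0,1,\dots,m$ I would regard $\nabla^\ell f_j$ as an element of $L_q(\Omega)^{(\text{number of }\ell\text{-th order derivatives})}$, i.e.\ apply Lemma \ref{lem:p1} componentwise. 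For the map $\CN_j$ I would set $\CN_j(\vp)=\|\vp\|_{L_{q'}(\Omega\cap B^i_j)}$ for $\vp\in L_{q'}(\Omega)$. The first hypothesis of Lemma \ref{lem:p1}, namely $|\langle \nabla^\ell f_j,\vp\rangle|\le M_7 g^\ell_j\CN_j(\vp)$, is then exactly the assumption $|(\nabla^\ell f_j,\vp)_\Omega|\le M_7 g^\ell_j\|\vp\|_{L_{q'}(\Omega\cap B^i_j)}$ that we are given (here I can take $m=1$ and $a_1=1$ in Lemma \ref{lem:p1}, so the combined bound is automatic, or more simply invoke the statement with a single family).

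The key point is to verify the remaining hypothesis of Lemma \ref{lem:p1}: $\sum_{j=1}^\infty(\CN_j(\vp))^{q'}\le(M_6\|\vp\|_X)^{q'}$ for some $M_6$. By Proposition \ref{general domain}(\ref{g-4}), the balls $\{B^i_j\}$ have the bounded-overlap property with overlap number $N_0$; hence for any $\vp\in L_{q'}(\Omega)$,
\begin{equation*}
	\sum_{j=1}^\infty \|\vp\|_{L_{q'}(\Omega\cap B^i_j)}^{q'}
	= \sum_{j=1}^\infty \int_{\Omega\cap B^i_j} |\vp(x)|^{q'}\,dx
	\le N_0 \int_{\Omega}|\vp(x)|^{q'}\,dx = N_0\|\vp\|_{L_{q'}(\Omega)}^{q'},
\end{equation*}
which is precisely the required bound with $M_6=N_0^{1/q'}$ (this is essentially the dual form of \eqref{intersect}). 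Together with the assumed summability $\sum_{j=1}^\infty(g^\ell_j)^q<\infty$, Lemma \ref{lem:p1} then yields that each infinite sum $\nabla^\ell f=\sum_{j=1}^\infty\nabla^\ell f_j$ converges in the strong topology of $L_q(\Omega)$ and
\begin{equation*}
	\|\nabla^\ell f\|_{L_q(\Omega)}\le M_7 M_6\Big(\sum_{j=1}^\infty(g^\ell_j)^q\Big)^{1/q}
	= C_{q,M_4}M_7\Big(\sum_{j=1}^\infty(g^\ell_j)^q\Big)^{1/q}
\end{equation*}
for $\ell=0,1,\dots,m$.

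Finally I would tie up the identification of $f$ as an element of $H^m_q(\Omega)$: the case $\ell=0$ gives convergence of $\sum_j f_j$ in $L_q(\Omega)$ to some $f$, and the cases $\ell\ge1$ give convergence of $\sum_j\nabla^\ell f_j$ in $L_q(\Omega)$; since differentiation is closed on $L_q$ (distributional derivatives of an $L_q$-convergent sequence whose derivatives also converge in $L_q$ converge to the derivative of the limit), the $L_q$-limit of $\sum_j\nabla^\ell f_j$ equals $\nabla^\ell f$, so $f\in H^m_q(\Omega)$ and the sum converges in the $H^m_q$-norm. I do not expect a genuine obstacle here; the only point requiring a little care is making sure the overlap constant $N_0$ (rather than the partition-of-unity constant $M_4$) is what enters, and that the componentwise application of Lemma \ref{lem:p1} across the finitely many entries of $\nabla^\ell f_j$ only changes the constant by a dimensional factor, which is absorbed into $C_{q,M_4}$.
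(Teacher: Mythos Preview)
Your proof is correct. The paper does not actually prove Lemma~\ref{lem:p2}; it simply cites it from \cite[Proposition 5.2 (ii)]{S2014}, so there is no in-paper argument to compare against. Your derivation --- applying Lemma~\ref{lem:p1} with $X=L_{q'}(\Omega)$, $\CN_j(\vp)=\|\vp\|_{L_{q'}(\Omega\cap B^i_j)}$, and verifying the summability hypothesis via the finite-overlap property (which is exactly \eqref{intersect} with $r=q'$) --- is the standard and intended route; indeed Lemma~\ref{lem:p1} is stated in the paper precisely so that results of this type follow as corollaries. Your closing remark about $N_0$ versus $M_4$ is not an issue: the constant $M_4$ in \eqref{intersect} already encodes the overlap bound, so $M_6=M_4$ works directly and the final constant is legitimately written $C_{q,M_4}$.
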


\subsection{Localization}\label{local}
Let $\Omega^1_j = \R^N$, $\Omega^2_j = \Phi^2_j (\R^N_+)$, and $\Gamma_j = \Phi^2_j (\R^N_0)$ for $j \in \N$,
and let
\begin{equation}\label{local gamma}
	\gamma^i_{kj} (x) = (\gamma_k(x) - \gamma_k(x^i_j)) \widetilde \xi^i_j(x) + \gamma_k(x^i_j)
\end{equation}
for $x \in \Omega^i_j$, $i = 1, 2$, $j \in \N$, $k = 1, 2, 3, 4$.
Then it holds by \eqref{lip} that
\begin{equation}\label{lip1}
\begin{aligned}
	&\sup_{x \in \Omega^1_j} |\gamma^1_{kj}(x) - \gamma_k(x^1_j)| \le \sup_{x \in B^1_j}|\gamma (x) - \gamma(x^1_j)| \le \min \{\delta_1, \delta_2\},\\
	&\sup_{x \in \Omega^2_j} |\gamma^2_{kj}(x) - \gamma_k(x^2_j)| \le \sup_{x \in \overline{\Omega} \cap B^2_j}|\gamma (x) - \gamma(x^2_j)| \le \min \{\delta_1, \delta_2\}
\end{aligned}
\end{equation}
for $k = 1, 2, 3, 4$ and $j \in \N$, then together with Remark \ref{rem:lip}, 
\begin{equation}\label{lip2}
	\|\nabla \gamma^i_{kj}\|_{L_\infty(\Omega^i_j)} \le L + \gamma^* + M_3
\end{equation}
for $i =1, 2$, $k = 1, 2, 3$, and $j \in \N$.
Assumption \ref{assumption gamma} provide that
\begin{equation}\label{assumption bent}
	(\gamma_2(x^2_j) + \gamma_3(x^2_j))^2 \neq 2(\gamma_1(x^2_j))^2\gamma_4(x^2_j), \quad (\gamma_1(x^2_j))^2 \gamma_4(x^2_j) \neq \gamma_2(x^2_j) \gamma_3(x^2_j).
\end{equation}

Set 
\[
	\bS^i_j(\bu) = \gamma^i_{2j} \bD(\bu) + (\gamma^i_{3j} - \gamma^i_{2j}) \dv \bu \bI
\]
for $i = 1, 2$ and $j \in \N$, then
we consider the following problems:
\begin{equation}\label{local w}
\left\{
\begin{aligned}
	&\lambda \rho^1_j + \gamma^1_{1j} \dv \bu^1_j = \widetilde \xi^1_j d & \quad&\text{in $\Omega^1_j$}, \\
	&\lambda \bu^1_j - \gamma_4^{-1} 
	\DV(\bS^1_j(\bu^1_j) + \gamma^2_{1j} \Delta \rho^1_j \bI)=  \widetilde \xi^1_j \bff& \quad&\text{in $\Omega^1_j$},
\end{aligned}
\right.
\end{equation}
\begin{equation}\label{local b}
\left\{
\begin{aligned}
	&\lambda \rho^2_j + \gamma^2_{1j} \dv \bu^2_j = \widetilde \xi^2_j d & \quad&\text{in $\Omega^2_j$}, \\
	&\lambda \bu^2_j - \gamma_4^{-1} 
	\DV(\bS^2_j(\bu^2_j) + \gamma^2_{1j} \Delta \rho^2_j \bI)=  \widetilde \xi^2_j \bff& \quad&\text{in $\Omega^2_j$},\\
	&(\bS^2_j(\bu^2_j) + \gamma^2_{1j} \Delta \rho^2_j \bI) \bn_j - \sigma \Delta_{\Gamma_j} h_j \bn_j =  \widetilde \xi^2_j \bg & \quad&\text{on $\Gamma_j$},\\
	&\bn_j \cdot \nabla \rho^2_j =  \widetilde \xi^2_j k & \quad&\text{on $\Gamma_j$},\\
	&\lambda h_j - \bu^2_j \cdot \bn_j = \widetilde \xi^2_j \zeta & \quad&\text{on $\Gamma_j$}.
\end{aligned}
\right.
\end{equation}
Thanks to \eqref{lip1}, \eqref{lip2}, and \eqref{assumption bent}, we can apply Proposition \ref{prop:w} and Theorem \ref{thm:main bent} to problems \eqref{local w} and \eqref{local b}, respectively, then
we observe that there exists a constant $\omega_0$ such that the following assertions hold:

$\thetag1$
For any $\lambda \in \C_{+, \omega_0}$ there exist operators  
\begin{align*}
	&\CA^i_j (\lambda) \in 
	{\rm Hol} (\C_{+, \omega_0}, 
	\CL(\CX^i_q(\Omega^i_j), H^3_q(\Omega^i_j)))\\
	&\CB^i_j (\lambda) \in 
	{\rm Hol} (\C_{+, \omega_0}, 
	\CL(\CX^i_q(\Omega^i_j), H^2_q(\Omega^i_j)^N)),\\
	&\CC_j (\lambda) \in 
	{\rm Hol} (\C_{+, \omega_0}, 
	\CL(\CX^2_q(\Omega^2_j), H^3_q(\Omega^2_j)))
\end{align*}
such that 
for any $\bF=(d, \bff, \bg, k, \zeta) \in X_q^0(\Omega)$, 
\begin{equation}\label{localized sol}
\begin{aligned}
	(\rho^1_j, \bu^1_j) &= (\CA^1_j (\lambda) \CF^1_\lambda \bF^1, \CB^1_j (\lambda) \CF^1_\lambda \bF^1),\\
	(\rho^2_j, \bu^2_j, h_j) &= (\CA^2_j (\lambda) \CF^2_\lambda \bF^2, \CB^2_j (\lambda) \CF^2_\lambda \bF^2, \CC_j (\lambda) \CF^2_\lambda \bF^2),
\end{aligned}
\end{equation}
are a unique solutions of problems \eqref{local w} and \eqref{local b}, respectively,
where we have set $\bF^1 = (\widetilde \xi^1_j d, \widetilde \xi^1_j \bff)$ and 
$\bF^2 = (\widetilde \xi^2_j d, \widetilde \xi^2_j \bff, \widetilde \xi^2_j \bg, \widetilde \xi^2_j k, \widetilde \xi^2_j \zeta)$.

$\thetag2$
There exists a positive constant $r$ such that
\begin{equation} \label{local rbdd}
\begin{aligned}
	&\CR_{\CL(\CX^i_q(\Omega^i_j), \fA_q^0 (\Omega^i_j))}
	(\{(\tau \pd_\tau)^n \CR_\lambda^0 \CA^i_j (\lambda) \mid 
	\lambda \in \C_{+, \omega_0}\}) 
	\leq r,\\
	&\CR_{\CL(\CX^i_q(\Omega^i_j), \fB_q(\Omega^i_j))}
	(\{(\tau \pd_\tau)^n \CS_\lambda \CB^i_j (\lambda) \mid 
	\lambda \in \C_{+, \omega_0}\}) 
	\leq r,\\
	&\CR_{\CL(\CX^2_q(\Omega^2_j), \fC_q(\Omega^2_j))}
	(\{(\tau \pd_\tau)^n \CT_\lambda \CC_j (\lambda) \mid 
	\lambda \in \C_{+, \omega_0}\}) 
	\leq r
\end{aligned}
\end{equation}
for $n = 0, 1$, $i = 1, 2$, $j \in \N$.
Here, above constant $r$ depend solely on $M_3$, $N$, $q$, $L$, $\gamma_*$ and $\gamma^*$, but independent of $j \in \N$. 
In particular,
\begin{equation}\label{local resolvent}
\begin{aligned}
	&\|(\CR_\lambda^0 \rho^1_j, \CS_\lambda \bu^1_j)\|_{\fA_q^0 (\Omega^1_j) \times \fB_q (\Omega^1_j)} \le C_{M_3} r \|(\nabla d, \lambda^{1/2}d, \bff)\|_{L_q(\Omega \cap B^1_j)},\\
	&\|(\CR_\lambda^0 \rho^2_j, \CS_\lambda \bu^2_j, \CT_\lambda h_j)\|_{\fA_q^0 (\Omega^2_j) \times \fB_q (\Omega^2_j) \times \fC_q (\Omega^2_j)} \\
	&\quad \le C_{M_3} r 
	(\|(\nabla d, \lambda^{1/2} d, \bff, \nabla \bg, \lambda^{1/2} \bg, \nabla^2 k, \nabla \lambda^{1/2}k, \lambda k)\|_{L_q(\Omega \cap B^2_j)}+\|\zeta\|_{H^2_q(\Omega \cap B^2_j)}),
\end{aligned}
\end{equation}
where we have used Proposition \ref{general domain} \eqref{g-3}.

\subsection{Construction of a parametrix}
According to \eqref{intersect} and \eqref{local resolvent}, 
$\sum^2_{i=1} \sum^\infty_{j=1} \xi^i_j \rho^i_j$, $\sum^2_{i=1} \sum^\infty_{j=1} \xi^i_j \bu^i_j$, and $\sum^\infty_{j=1} \xi^2_j h_j$ exist in the strong topology of $H_q^3(\Omega)$, $H_q^2(\Omega)^N$, and $H_q^3(\Omega)$ for $(d, \bff, \bg, k, \zeta) \in X^0_q(\Omega)$, respectively.
Thus, let us define 
\begin{equation}\label{def sol}
	\rho = \sum^2_{i=1} \sum^\infty_{j=1} \xi^i_j \rho^i_j \text{ in~$H^3_q(\Omega)$}, \quad
	\bu = \sum^2_{i=1} \sum^\infty_{j=1} \xi^i_j \bu^i_j \text{ in~$H^2_q(\Omega)^N$}, \quad
	h = \sum^\infty_{j=1} \xi^2_j h_j \text{ in~$H^3_q(\Omega)$}.
\end{equation}
Furthermore, \eqref{local w}, \eqref{local b}, Proposition \ref{general domain}, and \eqref{normal} imply that $\rho$, $\bu$, and $h$ satisfy the following problem:

\begin{equation}\label{p-prob}
\left\{
\begin{aligned}
	&\lambda \rho + \gamma_1 \dv \bu = d - \CU_1(\lambda)\bF & \quad&\text{in $\Omega$}, \\
	&\lambda \bu - \gamma_4^{-1} 
	\DV(\bS(\bu) + \gamma_1  \Delta \rho \bI) = \bff - \CU_2(\lambda)\bF& \quad&\text{in $\Omega$},\\
	&(\bS(\bu) + \gamma_1  \Delta \rho \bI) \bn - \sigma \Delta_{\Gamma} h \bn =\bg - \CU_3(\lambda)\bF & \quad&\text{on $\Gamma$},\\
	&\bn \cdot \nabla \rho = k - \CU_4(\lambda)\bF& \quad&\text{on $\Gamma$},\\
	&\lambda h - \bu \cdot \bn = \zeta & \quad&\text{on $\Gamma$},
\end{aligned}
\right.
\end{equation}
where $\bS(\bu) = \gamma_2 \bD(\bu) + (\gamma_3-\gamma_2) \dv \bu \bI$ and
\begin{equation}\label{remainder}
\begin{aligned}
	\CU_1(\lambda) \bF &= \sum^2_{i=1} \sum^\infty_{j=1}(\xi^i_j \gamma^i_{1j} \dv \bu^i_j - \gamma_1 \dv(\xi^i_j \bu^i_j)),\\ 
	\CU_2(\lambda) \bF &= -\sum^2_{i=1} \sum^\infty_{j=1}\xi^i_j(\gamma^i_{4j})^{-1}\DV(\bS^i_j(\bu^i_j) + \gamma^i_{1j} \Delta \rho^i_j \bI)\\
	&\enskip + \sum^2_{i=1} \sum^\infty_{j=1} \gamma_4^{-1} \DV(\bS(\xi^i_j \bu^i_j) + \gamma_1 \Delta(\xi^i_j \rho^i_j)\bI),\\
	\CU_3(\lambda) \bF &= \sum^\infty_{j=1}\xi^2_j (\bS^2_j(\bu^2_j) \bn_j + \gamma^2_{1j} \Delta \rho^2_j \bn_j) - \sigma \sum^\infty_{j=1} \xi^2_j \Delta_{\Gamma_j} h_j \bn_j\\
		&\enskip - \sum^\infty_{j=1} (\bS(\xi^2_j \bu^2_j) + \gamma_1 \Delta(\xi^2_j \rho^2_j)\bI)\bn + \sigma \sum^\infty_{j=1} \Delta_{\Gamma} (\xi^2_j h_j) \bn,\\
	\CU_4(\lambda) \bF &= \sum^\infty_{j=1} (\xi^2_j \bn_j \cdot \nabla \rho^2_j - \bn \cdot \nabla(\xi^2_j \rho^2_j)).
\end{aligned}
\end{equation}
Let $\bH=(H_1, \dots, H_9) \in \CX^2_q(\Omega)$, where $H_1, \dots, H_9$ are variables corresponding to
$\nabla d$, $\lambda^{1/2}d$, $\bff$, $\nabla \bg$, $\lambda^{1/2}\bg$, $\nabla^2 k$, $\nabla \lambda^{1/2} k$, $\lambda k$, and $\zeta$, respectively.
Then the right-hand side of the following identity
\[
	\CF^1_\lambda \bF^1 = (\nabla (\widetilde \xi^1_j d), \lambda^{1/2}\widetilde \xi^1_j d, \widetilde \xi^1_j \bff) 
	= ( \widetilde \xi^1_j \nabla d + \lambda^{-1/2}( \nabla \widetilde \xi^1_j) \lambda^{1/2} d, \widetilde \xi^1_j \lambda^{1/2} d, \widetilde \xi^1_j \bff)
\]
corresponds to $( \widetilde \xi^1_j H_1 + \lambda^{-1/2}( \nabla \widetilde \xi^1_j) H_2, \widetilde \xi^1_j H_2, \widetilde \xi^1_j H_3)$,
where $\bF^1$ was defined in subsection \ref{local}.
Similarly, we can consider for $\CF^2_\lambda \bF^2$.
In view of these correspondence and \eqref{localized sol}, we define for $(Z^1_j, \CZ^1_j) \in \{(A^1_j, \CA^1_j), (B^1_j, \CB^1_j)\}$ and for $(Z^2_j, \CZ^2_j) \in \{(A^2_j, \CA^2_j), (B^2_j, \CB^2_j), (C_j, \CC_j)\}$
\begin{align*}
	Z^1_j(\lambda) \bH &= \CZ^1_j(\lambda)(\widetilde \xi^1_j H_1 + \lambda^{-1/2}( \nabla \widetilde \xi^1_j) H_2, \widetilde \xi^1_j H_2, \widetilde \xi^1_j H_3),\\
	Z^2_j(\lambda) \bH &= \CZ^2_j(\lambda)(\widetilde \xi^2_j H_1 + \lambda^{-1/2}( \nabla \widetilde \xi^2_j) H_2, \widetilde \xi^2_j H_2, \widetilde \xi^2_j H_3, \lambda^{-1/2}(\nabla \widetilde \xi^2_j) \otimes H_5 + \widetilde \xi^2_j H_4, \widetilde \xi^2_j H_5, \\
	&\enskip \widetilde \xi^2_j H_6 + \lambda^{-1/2} \nabla \widetilde \xi^2_j \otimes H_7 + \lambda^{-1/2}H_7 \otimes \nabla \widetilde \xi^2_j + \lambda^{-1} (\nabla^2\widetilde \xi^2_j) H_8,\\
&\enskip \widetilde \xi^2_j H_7 + \lambda^{-1/2}(\nabla \widetilde \xi^2_j)H_8, \widetilde \xi^2_j H_8, \widetilde \xi^2_j H_9).
\end{align*}

Here we only consider the operator $C_j(\lambda)$ for instance.
Let $\lambda_3$ be a arbitrary number with $\lambda_3 \ge \omega_0$.
It holds by \eqref{local rbdd} and Proposition \ref{general domain} that for any $m \in \N$, $\{\lambda_\ell\}_{\ell=1}^m \subset \C_{+, \lambda_3}$, $\{\bH_\ell\}_{\ell=1}^m \subset \CX^2_q(\Omega)$, and $k=0, 1$
\begin{equation}\label{rbdd c1}
\begin{aligned}
	&\left(\int^1_0 \left \|\sum^m_{\ell=1} r_\ell(u) \lambda_\ell^k \xi^2_j C_j(\lambda_\ell) \bH_\ell \right \|_{H^{3-k}_q(\Omega^2_j)}^q \, du\right)^{1/q}\\
	&\le r C_{q, \omega_0, M_3} \left(\int^1_0 \left\|\sum^m_{\ell=1}  r_\ell(u) \bH_\ell \right \|_{\CX^2_q(\Omega \cap B^2_j)}^q\, du \right)^{1/q}.
\end{aligned}
\end{equation}
Lemma \ref{lem:p2}, together with \eqref{intersect} and \eqref{rbdd c1} with $m=1$, yields that
$\sum^\infty_{j=1} \xi^2_j C_j(\lambda) \bH$ exists in the strong topology of $H^3_q(\Omega)$, thus we define 
\begin{equation}\label{def c}
	C(\lambda) \bH = \sum^\infty_{j=1} \xi^2_j C_j(\lambda) \bH 
\end{equation}
for $\bH \in \CX^2_q(\Omega)$. 
Furthermore, by similar method as in the proof of \cite[Proposition 5.3]{S2014}, we see that
$C(\lambda) \in {\rm Hol} (\C_{+, \lambda_3}, \CL(\CX^2_q(\Omega), H^3_q(\Omega)))$
by \eqref{intersect} and \eqref{rbdd c1} with $m=1$.
Let us prove the $\CR$-boundedness for $\CT_\lambda C(\lambda)$ by \eqref{rbdd c1}.
Note that for any $m \in \N$, $\{a_\ell\}_{\ell=1}^m \subset \C$, $\{\lambda_\ell\}_{\ell=1}^m \subset \C_{+, \lambda_3}$, and $\{\bH_\ell\}_{\ell=1}^m \subset \CX^2_q(\Omega)$
\begin{align*}
	&\sum^\infty_{j=1} \left( \sum^m_{\ell=1} |a_\ell| \|\lambda_\ell^k (\xi^2_j C_j(\lambda_\ell)\bH_\ell)\|_{H^{3-k}_q(\Omega_j^2)}\right)^q\\
	&\le r^q C_{q, \omega_0, M_3} \sum^m_{\ell=1} |a_\ell|^q \sum^\infty_{j=1} \|\bH_\ell\|_{\CX^2_q(\Omega \cap B^2_j)}^q\\
	&\le r^q C_{q, \omega_0, M_3}M_4^q \sum^m_{\ell=1} |a_\ell|^q \|\bH_\ell\|_{H^{3-k}_q(\Omega)}^q
\end{align*}
by \eqref{rbdd c1} and \eqref{intersect}. 
Then it holds by Lemma \ref{lem:p1}, \eqref{rbdd c1}, \eqref{intersect}, together with the monotone convergence theorem,
\begin{equation}\label{rbdd c2}
\begin{aligned}
	&\int^1_0 \left \|\sum^m_{\ell=1} r_\ell(u) \lambda_\ell^k C(\lambda_\ell) \bH_\ell \right \|_{H^{3-k}_q(\Omega)}^q \, du\\
	&\le r^q C_{q, \omega_0, M_3}M_4^q \sum^\infty_{j=1} \int^1_0 \left \|\sum^m_{\ell=1} r_\ell(u) \lambda^k_\ell C(\lambda_\ell) \bH_\ell \right \|_{H^{3-k}_q(\Omega_j^2)}^q\, du \\
	&\le r^q C_{q, \omega_0, M_3}M_4^q \sum^\infty_{j=1} \int^1_0 \left \|\sum^m_{\ell=1} r_\ell(u) \bH_\ell \right \|_{\CX^2_q(\Omega \cap B^2_j)}^q\, du\\
	&\le r^q C_{q, \omega_0, M_3}M_4^{2q} \int^1_0 \left \|\sum^m_{\ell=1} r_\ell(u) \bH_\ell \right \|_{\CX^2_q(\Omega)}^q\, du.
\end{aligned}
\end{equation}
Therefore we obtain
\[
	\CR_{\CL(\CX^2_q(\Omega), \fC_q(\Omega))}
\left(\{\CT_\lambda C(\lambda) \mid \lambda \in \C_{+, \lambda_3}\}\right)
\le R,
\]
where we have set $R=r (C_{q, \omega_0, M_3})^{1/q}M_4^2$.
Similarly, we can consider the $\CR$-boundedness for $(\tau \pd_\tau)\CT_\lambda C(\lambda)$,
then we reach
\begin{equation}\label{rbdd c}
	\CR_{\CL(\CX^2_q(\Omega), \fC_q(\Omega))}
\left(\{(\tau \pd_\tau)^n \CT_\lambda C(\lambda) \mid \lambda \in \C_{+, \lambda_3}\}\right)
\le R
\end{equation}
for $n=0, 1$.  
Repeating the above calculation, define
\begin{equation}\label{def ab}
	A(\lambda) \bH = \sum^2_{i=1} \sum^\infty_{j=1} \xi^i_j A^i_j(\lambda) \bH, \quad B(\lambda) \bH = \sum^2_{i=1} \sum^\infty_{j=1} \xi^i_j B^i_j(\lambda) \bH
\end{equation}
for $\bH \in \CX^2_q(\Omega)$, then we see that 
$A(\lambda) \in {\rm Hol} (\C_{+, \lambda_3}, \CL(\CX^2_q(\Omega), H^3_q(\Omega)))$,
$B(\lambda) \in {\rm Hol} (\C_{+, \lambda_3}, \CL(\CX^2_q(\Omega), H^2_q(\Omega)^N))$,
and
\begin{equation}\label{rbdd ab}
\begin{aligned}
	\CR_{\CL(\CX^2_q(\Omega), \fA^0_q(\Omega))}
\left(\{(\tau \pd_\tau)^n \CR^0_\lambda A(\lambda) \mid \lambda \in \C_{+, \lambda_3}\}\right)
\le R,\\
	\CR_{\CL(\CX^2_q(\Omega), \fB_q(\Omega))}
\left(\{(\tau \pd_\tau)^n \CS_\lambda B(\lambda) \mid \lambda \in \C_{+, \lambda_3}\}\right)
\le R
\end{aligned}
\end{equation}
for $n=0, 1$.
Furthermore, by \eqref{def c} and \eqref{def ab}, together with \eqref{localized sol} and \eqref{def sol},
the solution $(\rho, \bu, h)$ of \eqref{p-prob}
can be written as
\[
	\rho = A(\lambda)\CF_\lambda^2 \bF, \enskip \bu = B(\lambda) \CF_\lambda^2 \bF, \enskip h = C(\lambda) \CF_\lambda^2 \bF
\]
for $\bF = (d, \bff, \bg, k, \zeta) \in X^0_q(\Omega)$.

\subsection{Proof of Theorem \ref{thm:main general}}
In the same way as subsection \ref{sec:rbdd half}, we can obtain the desired solution operators for \eqref{main prob}.
Set 
\[
	\CU(\lambda) \bF = (\CU_1(\lambda) \bF, \CU_2(\lambda) \bF, \CU_3(\lambda) \bF, \CU_4(\lambda) \bF, 0).
\]
The proof of the following lemma is postponed to the next subsection.
\begin{lem}\label{lem:Rbound V g}
Let $1< q <\infty$, and let $\Omega$ be a uniform $C^3$-domain. Let $\lambda_3$ be the constants mentioned above.
Then for any $\lambda \in \C_{+, \lambda_3}$ there exists an operator 
\[
	\CV(\lambda) \in {\rm Hol} (\C_{+, \lambda_3}, \CL(\CX^2_q(\Omega), X^0_q(\Omega)))
\]
such that $\CU(\lambda) \bF = \CV(\lambda) \CF^2_\lambda \bF$ for $\bF \in X^0_q(\Omega)$ and
\begin{equation}\label{Rbound V g}
	\CR_{\CL(\CX_q^2(\Omega))}\{(\tau \pd_\tau)^n \CF_\lambda^2 \CV(\lambda) \mid \lambda \in \C_{+, \lambda_3}\}
\le C_{q, \omega_0, R, M_2, M_3} \lambda_3^{-1/2}
\end{equation}
for $n=0, 1$, where $C_{q, \omega_0, R, M_2, M_3}$ is some positive constant.
\end{lem}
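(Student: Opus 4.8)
The plan is to exhibit each remainder term $\CU_k(\lambda)\bF$ (for $k=1,2,3,4$) as the image under a bounded operator $\CV(\lambda)$ applied to $\CF_\lambda^2\bF$, and then to estimate $\CF_\lambda^2\CV(\lambda)$ using the commutator structure that is already visible in \eqref{remainder}. The crucial observation is that every term in $\CU_k(\lambda)\bF$ is of the form ``$\xi^i_j$(differential operator applied to $(\rho^i_j,\bu^i_j,h_j)$)'' minus ``(same differential operator) applied to $\xi^i_j(\rho^i_j,\bu^i_j,h_j)$'', so by Leibniz's rule the highest-order derivatives cancel and only terms in which at least one derivative falls on a cutoff $\xi^i_j$ or $\widetilde\xi^i_j$ survive. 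Concretely, using $\sum_{i,j}\xi^i_j\gamma^i_{kj}=\gamma_k$ on $\overline\Omega$ (which follows from \eqref{local gamma} and Proposition \ref{general domain} \eqref{g-3}), one finds that $\CU_1(\lambda)\bF$ is a finite sum of terms $\xi^i_j\gamma^i_{1j}(\nabla\xi^i_j)\cdot\bu^i_j$-type expressions, i.e.\ lower order in $\bu^i_j$ than $\nabla\bu^i_j$, and carry an explicit derivative of the cutoff; the same holds for $\CU_2,\CU_3,\CU_4$ with one extra order available.

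The next step is to recast these lower-order quantities through $\CF_\lambda^2$. Following the bookkeeping of subsection \ref{sec.general1} (the displayed correspondences between $\CF_\lambda^1\bF^1$, $\CF_\lambda^2\bF^2$ and the variables $H_1,\dots,H_9$), one writes $(\rho^i_j,\bu^i_j,h_j)=(A^i_j(\lambda),B^i_j(\lambda),C_j(\lambda))\bH$ with $\bH=\CF_\lambda^2\bF$, and then observes that each surviving term, after multiplication by the appropriate power of $\lambda$ required by $\CF_\lambda^2$, contains a factor $\lambda^{-1/2}$ (or $\lambda^{-1}$) coming from the ``extra'' power of $\lambda$ that appears once a spatial derivative is traded for a cutoff derivative rather than a derivative of the solution. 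This is exactly the mechanism by which one obtains the $\lambda_3^{-1/2}$ gain in \eqref{Rbound V g}: schematically, $\lambda\,\xi(\nabla\xi)\cdot\bu^i_j=\lambda^{-1/2}(\nabla\xi)\cdot(\lambda^{3/2-1}\cdot\lambda\bu^i_j)$ is controlled $\CR$-wise by $\lambda_3^{-1/2}$ times $\CS_\lambda\bu^i_j$, and similarly for the $\rho$- and $h$-components, where the need to estimate $\lambda^{3/2}\rho$ (hence the presence of $\lambda^{3/2}\rho$ in $\CR^0_\lambda$ and the term $\lambda^{1/2}d$ in $\CF_\lambda^2$) is precisely what was flagged in the introduction and in the remark after Proposition \ref{prop:w}. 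Summing over $i,j$ is handled by Lemma \ref{lem:p1} together with the finite-overlap property \eqref{intersect} and Proposition \ref{general domain} \eqref{g-4}, exactly as in the derivation of \eqref{rbdd c2}; the $\CR$-bound for each localized block is \eqref{local rbdd}, and Lemma \ref{lem:5.3} \eqref{lem:Rbound3} converts the scalar factors $\lambda^{-1/2}$, $\tau/\lambda$ arising from $(\tau\pd_\tau)$ (as in the identity $(\tau\pd_\tau)\nabla\CB_1(\lambda)=\lambda^{-1/2}\{(\tau\pd_\tau)(\lambda^{1/2}\nabla\CB_1(\lambda))-\tfrac{i}{2}\tfrac{\tau}{\lambda}(\lambda^{1/2}\nabla\CB_1(\lambda))\}$ used earlier) into $\lambda_3^{-1/2}$.

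The main obstacle, as usual in these localization arguments, is the careful treatment of the Laplace--Beltrami term in $\CU_3(\lambda)\bF$: one must verify that $\sum_j\xi^2_j\Delta_{\Gamma_j}h_j\bn_j-\sum_j\Delta_\Gamma(\xi^2_j h_j)\bn$ really does reduce, modulo $\lambda^{-1/2}$-gaining lower-order terms, to derivatives of cutoffs against $h_j$, using $\bn=\bn_j$ and $\Delta_\Gamma=\Delta_{\Gamma_j}$ on $\Gamma\cap B^2_j$ from \eqref{normal} together with the estimates \eqref{est:normal} on $\bn_j$; one also has to check that commuting $\xi^2_j$ past the second-order tangential operator produces only first- and zeroth-order tangential derivatives of $h_j$, which are controlled by $\|h_j\|_{H^2_q}\lesssim\lambda_3^{-1/2}\|\lambda^{1/2}h_j\|_{\ldots}$ (using $\lambda h=h+\text{l.o.t.}$, i.e.\ the $\lambda h$ component of $\CT_\lambda h$) and by $\|\nabla^3(\xi^2_j\rho^2_j)\|$ with one derivative on the cutoff. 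Once this term is dispatched the remaining pieces $\CU_1,\CU_2,\CU_4$ are strictly easier, and assembling everything gives a single operator $\CV(\lambda)\in{\rm Hol}(\C_{+,\lambda_3},\CL(\CX^2_q(\Omega),X^0_q(\Omega)))$ satisfying $\CU(\lambda)\bF=\CV(\lambda)\CF^2_\lambda\bF$ and \eqref{Rbound V g}, with the constant depending only on $q,\omega_0,R,M_2,M_3$ as claimed.
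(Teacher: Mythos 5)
Your plan follows essentially the same route as the paper: you identify the commutator structure of the $\CU_k(\lambda)\bF$ (derivatives falling on cutoffs make every surviving term lower order), compose these commutators with the localized solution operators $A^i_j(\lambda)$, $B^i_j(\lambda)$, $C_j(\lambda)$, gain the factor $\lambda_3^{-1/2}$ by absorbing lower-order terms into the $\lambda$-weighted components of $\CR^0_\lambda$, $\CS_\lambda$, $\CT_\lambda$ (which is exactly why $\lambda^{3/2}\rho$ and $\lambda\nabla\rho$ were built into $\CR^0_\lambda$), treat the Laplace--Beltrami commutator via $\bn=\bn_j$ and $\Delta_\Gamma=\Delta_{\Gamma_j}$ on $\Gamma\cap B^2_j$, and sum over the covering with Lemma \ref{lem:p1} and \eqref{intersect} as in \eqref{rbdd c2}. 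The only imprecision is in your bookkeeping for the $h$-terms (the phrase ``$\lambda h=h+\text{l.o.t.}$'' is not an identity); the correct mechanism is simply that $\CD_{8j}(h_j)$ involves only $h_j$ up to second order, which is dominated by $\lambda_3^{-1}\|\lambda h_j\|_{H^2_q}$, a component of $\CT_\lambda h_j$ in $\fC_q$.
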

Admitting Lemma \ref{lem:Rbound V g} and $\lambda_3$ so large that
$
	C \lambda_3^{-1/2} \le 1/2,
$
we observe that there exists the inverse operator $(\bI - \CF_\lambda^2 \CV(\lambda))^{-1}$ satisfying
\begin{equation}\label{Rbound inverse V g}
	\CR_{\CL(\CX_q^2(\Omega))}\{(\tau \pd_\tau)^n (\bI - \CF_\lambda^2 \CV(\lambda))^{-1} \mid \lambda \in \C_{+, \lambda_3}\}
\le 2.
\end{equation}
Set 
\begin{align*}
	\CA_4(\lambda) \bH &= A(\lambda) (\bI - \CF_\lambda^2\CV(\lambda))^{-1} \bH,\\
	\CB_4(\lambda) \bH &= B(\lambda) (\bI - \CF_\lambda^2\CV(\lambda))^{-1} \bH,\\
	\CC_4(\lambda) \bH &= C(\lambda) (\bI - \CF_\lambda^2\CV(\lambda))^{-1} \bH
\end{align*}
for $\bH \in \CX_q^2(\Omega)$,
then
$	
	(\rho, \bu, h) = (\CA_4(\lambda) \bH, \CB_4(\lambda) \bH, \CC_4(\lambda) \bH)
$
solves \eqref{main prob}. In addition,  Lemma \ref{lem:5.3} (\ref{lem:Rbound2}), \eqref{rbdd c}, \eqref{rbdd ab}, and \eqref{Rbound inverse V g} furnish that
\[
\begin{aligned}
	&\CR_{\CL(\CX_q^2(\Omega), \fA_q^0 (\Omega))}
	(\{(\tau \pd_\tau)^n \CR_\lambda^0 \CA_4 (\lambda) \mid 
	\lambda \in \C_{+, \lambda_3}\}) 
	\leq 2R,\\
	&\CR_{\CL(\CX_q^2(\Omega), \fB_q(\Omega))}
	(\{(\tau \pd_\tau)^n \CS_\lambda \CB_4 (\lambda) \mid 
	\lambda \in \C_{+, \lambda_3}\}) 
	\leq 2R,\\
	&\CR_{\CL(\CX_q^2(\Omega), \fC_q(\Omega))}
	(\{(\tau \pd_\tau)^n \CT_\lambda \CC_4 (\lambda) \mid 
	\lambda \in \C_{+, \lambda_3}\}) 
	\leq 2R
\end{aligned}
\]
for $n = 0, 1$.

The uniqueness of the solution to \eqref{main prob} follows from the same method as \cite[subsection 3.3]{S2020}.
Let $1<q<\infty$ and $q'=q/(q-1)$, and let $\lambda_3=\lambda_3(q)$ be the positive number given in the previous subsection.
Assume $\lambda \in \C_{+, \max\{\lambda_3(q), \lambda_3(q')\}}$ and $(\rho, \bu, h) \in H^3_q(\Omega) \times H^2_q(\Omega)^N \times H^3_q(\Omega)$ satisfies
\begin{equation}\label{homo}
\left\{
\begin{aligned}
	&\lambda \rho + \gamma_1 \dv \bu = 0 & \quad&\text{in $\Omega$}, \\
	&\lambda \bu - \gamma_4^{-1} 
	\DV\{\gamma_2 \bD(\bu) + (\gamma_3 - \gamma_2) \dv \bu \bI + \gamma_1  \Delta \rho \bI\}= 0& \quad&\text{in $\Omega$},\\
	&\{\gamma_2 \bD(\bu) + (\gamma_3 - \gamma_2) \dv \bu \bI + \gamma_1  \Delta \rho \bI \} \bn - \sigma \Delta_{\Gamma} h \bn =0 & \quad&\text{on $\Gamma$},\\
	&\bn \cdot \nabla \rho = 0 & \quad&\text{on $\Gamma$},\\
	&\lambda h - \bu \cdot \bn = 0 & \quad&\text{on $\Gamma$}.
\end{aligned}
\right.
\end{equation}
Let $\vp \in C^\infty_0(\Omega)$.
Since $\gamma_4^{-1} \vp \in L_{q'}(\Omega)$, there exists a solution $(\theta, \bv, \eta)
\in
H^3_{q'}(\Omega) \times H^2_{q'}(\Omega)^N \times H^3_{q'}(\Omega)$ satisfies
\begin{equation}\label{dual}
\left\{
\begin{aligned}
	&\lambda \theta + \gamma_1 \dv \bv = 0 & \quad&\text{in $\Omega$}, \\
	&\lambda \bv - \gamma_4^{-1} 
	\DV\{\gamma_2 \bD(\bv) + (\gamma_3 - \gamma_2) \dv \bv \bI + \gamma_1 \Delta \theta \bI\}= \gamma_4^{-1} \vp& \quad&\text{in $\Omega$},\\
	&\{\gamma_2 \bD(\bv) + (\gamma_3 - \gamma_2) \dv \bv \bI + \gamma_1  \Delta \theta \bI \} \bn - \sigma \Delta_{\Gamma} \eta \bn =0 & \quad&\text{on $\Gamma$},\\
	&\bn \cdot \nabla \theta = 0 & \quad&\text{on $\Gamma$},\\
	&\lambda \eta - \bv \cdot \bn = 0 & \quad&\text{on $\Gamma$}.
\end{aligned}
\right.
\end{equation}
Then we have
\[
	(\bu, \vp)_\Omega = (\bu, \gamma_4 \lambda \bv -  
	\DV\{\gamma_2 \bD(\bv) + (\gamma_3 - \gamma_2) \dv \bv \bI + \gamma_1 \Delta \theta \bI\})_\Omega,
\]
together with
\begin{equation}\label{div}
	(\bw, \DV \bM)_\Omega = (\bw, \bM \bn)_\Gamma - \frac12 (\bD(\bw), \bM)_\Omega
\end{equation}
for the vector of function $\bw$ and matrix field $\bM$ with $\bM^\mathsf{T}=\bM$,
furnishes that
\begin{align*}
	(\bu, \vp)_\Omega &= (\gamma_4 \lambda \bu, \bv)_\Omega -  
	(\bu, \{\gamma_2 \bD(\bv) + (\gamma_3 - \gamma_2) \dv \bv \bI + \gamma_1 \Delta \theta \bI\}\bn)_\Gamma \\
	&\enskip + \frac12(\bD(\bu), \gamma_2 \bD(\bv) + (\gamma_3 - \gamma_2) \dv \bv \bI + \gamma_1 \Delta \theta \bI)_\Omega\\
	&= (\gamma_4 \lambda \bu, \bv)_\Omega - (\bu, \sigma \Delta_\Gamma \eta \bn)_\Gamma\\
	&\enskip + \frac12\{(\gamma_2 \bD(\bu) + (\gamma_3 - \gamma_2) \dv \bu \bI, \bD(\bv))_\Omega + 2(\gamma_1 \dv \bu, \Delta \theta)_\Omega\},
\end{align*}
where we have used the third equation of \eqref{dual}.
Here the first equations of \eqref{homo} and \eqref{dual}, combined with the fourth equations of \eqref{homo} and \eqref{dual}, together with the integration by parts, imply that
\begin{align*}
	(\gamma_1 \dv \bu, \Delta \theta)_\Omega 
	&= -(\lambda \rho, \Delta \theta)_\Omega =-(\Delta \rho, \lambda \theta)_\Omega = (\Delta \rho, \gamma_1 \dv \bv)_\Omega\\
	&=(\gamma_1 \Delta \rho, \dv \bv)_\Omega = \frac12 (\gamma_1 \Delta \rho \bI, \bD(\bv))_\Omega. 
\end{align*}
Therefore, by \eqref{div} and the integration by parts,
\begin{align*}
	(\bu, \vp)_\Omega 
	&=(\gamma_4 \lambda \bu, \bv)_\Omega - (\bu, \sigma \Delta_\Gamma \eta \bn)_\Gamma 
	+ \frac12(\gamma_2 \bD(\bu) + (\gamma_3 - \gamma_2) \dv \bu \bI+\gamma_1 \Delta \rho \bI, \bD(\bv))_\Omega \\
	&=(\gamma_4 \lambda \bu, \bv)_\Omega - (\bu, \sigma \Delta_\Gamma \eta \bn)_\Gamma\\
	&\enskip + (\{\gamma_2 \bD(\bu) + (\gamma_3 - \gamma_2) \dv \bu \bI+\gamma_1 \Delta \rho \bI\}\bn, \bv)_\Gamma-(\DV\{\gamma_2 \bD(\bu) + (\gamma_3 - \gamma_2) \dv \bu \bI+\gamma_1 \Delta \rho \bI\}, \bv)_\Omega\\
	&=(\gamma_4 \lambda \bu-\DV\{\gamma_2 \bD(\bu) + (\gamma_3 - \gamma_2) \dv \bu \bI+\gamma_1 \Delta \rho \bI\}, \bv)_\Omega - (\bu, \sigma \Delta_\Gamma \eta \bn)_\Gamma + (\sigma \Delta_\Gamma h\bn, \bv)_\Gamma\\
	&=- (\bu \cdot \bn, \sigma \Delta_\Gamma \eta)_\Gamma + (\sigma \Delta_\Gamma h, \bv \cdot \bn)_\Gamma
	=-\sigma (\lambda h, \Delta_\Gamma \eta)_\Gamma + \sigma(\Delta_\Gamma h, \lambda \eta)_\Gamma\\
	&=0,
\end{align*}
which implies that $\bu=0$.
Since $\lambda \neq 0$, the first and the fifth equations of \eqref{homo} furnish that $\rho = 0$ in $\Omega$ and $h = 0$ on $\Gamma$.

\subsection{Proof of Lemma \ref{lem:Rbound V g}}
To construct $\CV(\lambda)$, let us define
\begin{align*}
	\CD^i_{1j}(\bu^i_j) &= \xi^i_j \gamma^i_{1j} \dv \bu^i_j - \gamma_1 \dv(\xi^i_j \bu^i_j),\\ 
	\CD^i_{2j}(\bu^i_j) &= -\xi^i_j(\gamma^i_{4j})^{-1}\DV(\gamma^i_{2j}\bD(\bu^i_j)) + \gamma_4^{-1} \DV(\gamma_2 \bD(\xi^i_j \bu^i_j)),\\
	\CD^i_{3j}(\bu^i_j) &= -\xi^i_j(\gamma^i_{4j})^{-1}\DV((\gamma^i_{3j} - \gamma^i_{2j}) \dv \bu^i_j \bI) + \gamma_4^{-1} \DV((\gamma_3-\gamma_2)\dv(\xi^i_j \bu^i_j)\bI),\\
	\CD^i_{4j}(\rho^i_j) &= -\xi^i_j(\gamma^i_{4j})^{-1}\DV(\gamma^i_{1j} \Delta \rho^i_j \bI) +  \gamma_4^{-1} \DV(\gamma_1 \Delta(\xi^i_j \rho^i_j)\bI),\\
	\CD_{5j}(\bu^2_j) &= \xi^2_j \gamma^2_{2j}\bD(\bu^2_j)\bn_j -\gamma_2 \bD(\xi^2_j \bu^2_j)\bn,\\
	\CD_{6j}(\bu^2_j) &= \xi^2_j (\gamma^2_{3j} - \gamma^2_{2j}) \dv \bu^2_j \bn_j - (\gamma_3-\gamma_2)\dv(\xi^2_j \bu^2_j)\bn,\\
	\CD_{7j}(\rho^2_j) &= \xi^2_j \gamma^2_{1j} \Delta \rho^2_j \bn_j -\gamma_1 \Delta (\xi^2_j \rho^2_j) \bn,\\
	\CD_{8j}(h_j) &= -\sigma \xi^2_j \Delta_{\Gamma_j} h_j \bn_j + \sigma \Delta_{\Gamma} (\xi^2_j h_j) \bn,\\
	\CD_{9j}(\rho^2_j) &= \xi^2_j \bn_j \cdot \nabla \rho^2_j - \bn \cdot \nabla(\xi^2_j \rho^2_j)
\end{align*}
for $i=1, 2$ and $j \in \N$.
According to the proof of \cite[Lemma 6.5]{S2020}, 
there exist positive constants $C_{M_3}$ and $C_{M_2, M_3}$ independent of $j$, such that
\begin{equation}\label{est:d1}
\begin{aligned}
	\|\nabla \CD^i_{1j} (\bu^i_j)\|_{L_q(\Omega)} &\le C_{M_3} \|(\nabla \bu^i_j. \bu^i_j)\|_{L_q(\Omega \cap B^i_j)},\\
	\|\CD^i_{1j} (\bu^i_j)\|_{L_q(\Omega)} &\le C_{M_3} \|\bu^i_j\|_{L_q(\Omega \cap B^i_j)},\\
	\|\nabla \CD^i_{kj} (\bu^i_j)\|_{L_q(\Omega)} &\le C_{M_3} \|(\nabla \bu^i_j. \bu^i_j)\|_{L_q(\Omega \cap B^i_j)}\quad (k=2, 3),\\
	\|\CD^i_{4j} (\rho^i_j)\|_{L_q(\Omega)} &\le C_{M_3} \|(\nabla^2 \rho^i_j. \nabla \rho^i_j, \rho^i_j)\|_{L_q(\Omega \cap B^i_j)},\\
	\|\nabla^2 \CD_{9j} (\rho^2_j)\|_{L_q(\Omega)} &\le C_{M_2, M_3} \|(\nabla^2 \rho^2_j. \nabla \rho^2_j, \rho^2_j)\|_{L_q(\Omega \cap B^2_j)},\\
	\|\nabla \CD_{9j} (\rho^2_j)\|_{L_q(\Omega)} &\le C_{M_2, M_3} \|(\nabla \rho^2_j, \rho^2_j)\|_{L_q(\Omega \cap B^2_j)},\\
	\|\CD_{9j} (\rho^2_j)\|_{L_q(\Omega)} &\le C_{M_3} \|\rho^2_j\|_{L_q(\Omega \cap B^2_j)}.
\end{aligned}
\end{equation}
Similarly, we consider $\CD_{kj}(\bu^2_j)$, $\CD_{7j}(\rho^2_j)$ and $\CD_{8j}(h_j)$ for $k=5, 6$, and $j \in \N$.
First, we consider $\CD_{kj}(\bu^2_j)$ and $\CD_{7j}(\rho^2_j)$ for $k=5, 6$, and $j \in \N$.
Since $\gamma^2_{\ell j} = \gamma_\ell$ on ${\rm supp~} \xi^2_j$ for $\ell = 1, 2, 3$ by \eqref{local gamma} and $\bn = \bn_j$ on $\Gamma \cap {\rm supp~} \xi^2_j$,
we can rewrite as follows:
\begin{align*}
	\CD_{5j}(\bu^2_j) &= -\gamma_2 (\nabla \xi^2_j \otimes \bu^2_j + \bu^2_j \otimes \nabla \xi^2_j)\bn_j,\\
	\CD_{6j}(\bu^2_j) &= - (\gamma_3-\gamma_2) \nabla \xi^2_j \cdot \bu^2_j \bn_j,\\
	\CD_{7j}(\rho^2_j) &= - \gamma_1\{(\Delta \xi^2_j) \rho^2_j + 2(\nabla \xi^2_j) \cdot (\nabla \rho_j^2)\}\bn_j.
\end{align*}
Thus, by Proposition \ref{general domain} \eqref{g-3} and \eqref{est:normal}, there exist positive constants $C_{M_3}$ and $C_{M_2, M_3}$ independent of $j$, such that
\begin{equation}\label{est:d2}
\begin{aligned}
	\|\nabla \CD_{5j}(\bu^2_j)\|_{L_q(\Omega)} &\le C_{M_2, M_3} \|(\nabla \bu^2_j. \bu^2_j)\|_{L_q(\Omega \cap B^2_j)},\\
	\|\CD_{5j}(\bu^2_j)\|_{L_q(\Omega)} &\le C_{M_3} \|\bu^2_j\|_{L_q(\Omega \cap B^2_j)},\\
	\|\nabla \CD_{6j}(\bu^2_j)\|_{L_q(\Omega)} &\le C_{M_2, M_3} \|(\nabla \bu^2_j, \bu^2_j)\|_{L_q(\Omega \cap B^2_j)},\\
	\|\CD_{6j}(\bu^2_j)\|_{L_q(\Omega)} &\le C_{M_3} \|\bu^2_j\|_{L_q(\Omega \cap B^i_j)},\\
	\|\nabla \CD_{7j}(\rho^2_j)\|_{L_q(\Omega)} &\le C_{M_2, M_3} \|(\nabla^2 \rho^2_j. \nabla \rho^2_j, \rho^2_j)\|_{L_q(\Omega \cap B^2_j)},\\
	\|\CD_{7j}(\rho^2_j)\|_{L_q(\Omega)} &\le C_{M_3} \|(\nabla \rho^2_j, \rho^2_j)\|_{L_q(\Omega \cap B^2_j)}.
\end{aligned}
\end{equation}
Next, we consider $\CD_{8j}(h_j)$ for $j \in \N$.
Since $\Delta_{\Gamma} = \Delta_{\Gamma_j}$ on $\Gamma \cap {\rm supp~} \xi^2_j$, we have
\[
	\CD_{8j}(h_j) = \sigma (\Delta_{\Gamma_j} \xi^2_j) h_j \bn_j + 2\sigma (\nabla_{\Gamma_j} \xi^2_j) \cdot (\nabla_{\Gamma_j} h_j),
\]
where the second term of $\CD_{8j}(h_j)$ is the inner product of $\nabla_{\Gamma_j} \xi^2_j$ and $\nabla_{\Gamma_j} h_j$ on $\Gamma_j$.
Here we set 
\begin{equation}\label{def:delta}
	\widetilde \nabla_{\Gamma_j} f = \Pi_{\Gamma_j} \nabla f, \quad \Pi_{\Gamma_j} = \bI - \bn_j \otimes \bn_j,
	\quad \widetilde \Delta_{\Gamma_j} f = {\rm tr~} \widetilde \nabla_{\Gamma_j}^2 f 
\end{equation}
for the function $f$ defined in a neighborhood of $\Gamma_j$.
Note that $\widetilde \nabla_{\Gamma_j} f = \nabla_{\Gamma_j} f$ on $\Gamma_j$, then $\widetilde \Delta_{\Gamma_j} f = \Delta_{\Gamma_j} f$ on $\Gamma_j$.
Therefore, we have
\[
	\CD_{8j}(h_j) = \sigma (\widetilde \Delta_{\Gamma_j} \xi^2_j) h_j \bn_j + 2\sigma (\widetilde \nabla_{\Gamma_j} \xi^2_j) \cdot (\widetilde \nabla_{\Gamma_j} h_j).
\]
Furthermore, by \eqref{def:delta}
\[
	\widetilde \Delta_{\Gamma_j} f = \Delta f -[{\rm tr~} (\Pi_{\Gamma_j} \nabla \bn_j)] (\bn_j \cdot \nabla)f - \bn_j \cdot[(\nabla^2 f)\bn_j], 
\]
together with Proposition \ref{general domain} \eqref{g-3} and \eqref{est:normal}, implies that 
\begin{equation}\label{est:xi}
	\|\widetilde \nabla_{\Gamma_j} \xi^2_j\|_{H^1_\infty(\R^N)} + \|\widetilde \Delta_{\Gamma_j} \xi^2_j\|_{H^1_\infty(\R^N)} \le C_{M_2, M_3}
\end{equation}
for some positive constant $C_{M_2, M_3}$ independent of $j$.
Therefore, we reach the estimates:
\begin{equation}\label{est:d3}
\begin{aligned}
	\|\nabla \CD_{8j}(h_j)\|_{L_q(\Omega)} &\le C_{M_2, M_3} \|(\nabla^2 h_j. \nabla h_j, h_j)\|_{L_q(\Omega \cap B^2_j)},\\
	\|\CD_{8j}(h_j)\|_{L_q(\Omega)} &\le C_{M_2, M_3} \|(\nabla h_j, h_j)\|_{L_q(\Omega \cap B^2_j)}.
\end{aligned}
\end{equation}

In view of \eqref{remainder},
we set 
\begin{align*}
	\CV^i_{1j}(\lambda)\bH &= \CD^i_{1j}(B^i_j(\lambda)\bH),\\
	\CV^i_{2j}(\lambda)\bH &= \CD^i_{2j}(B^i_j(\lambda)\bH) + \CD^i_{3j}(B^i_j(\lambda)\bH) + \CD^i_{4j}(A^i_j(\lambda)\bH),\\
	\CV_{3j}(\lambda)\bH &= \CD_{5j}(B^2_j(\lambda)\bH) + \CD_{6j}(B^2_j(\lambda)\bH) + \CD_{7j}(A^2_j(\lambda)\bH) + \CD_{8j}(C_j(\lambda)\bH),\\ 
	\CV_{4j}(\lambda)\bH &= \CD_{9j}(A^2_j(\lambda)\bH)
\end{align*}
for $\bH \in \CX^2_q(\Omega)$,
and then we construct $\CV(\lambda)$ by
\begin{align*}
	\CV(\lambda)\bH &= (\CV_1(\lambda)\bH, \CV_2(\lambda)\bH, \CV_3(\lambda)\bH, \CV_4(\lambda)\bH, 0),\\
	\CV_k(\lambda)\bH &= \sum^2_{i=1}\sum^\infty_{j=1} \CV^i_{kj}(\lambda)\bH,\quad
	\CV_\ell(\lambda)\bH = \sum^\infty_{j=1} \CV_{\ell j}(\lambda)\bH
\end{align*}
for $j \in \N$, $k = 1, 2$ and $\ell = 3, 4$.
Since $(\rho^i_j, \bu^i_j, h_j) = (A^i_j(\lambda)\CF^2_\lambda \bF, B^i_j(\lambda)\CF^2_\lambda \bF, C_j(\lambda)\CF^2_\lambda \bF)$, we have
$\CU(\lambda)\bF = \CV(\lambda)\CF^2_\lambda \bF$ for $\bF \in X^0_q(\Omega)$.

Finally, we estimate $\CV(\lambda)$.
Set 
\[
	\CV^i_j(\lambda)\bH =(\CV^i_{1j}(\lambda)\bH, \CV^i_{2j}(\lambda)\bH, \CV_{3j}(\lambda)\bH, \CV_{4j}(\lambda)\bH, 0).
\]
It holds by \eqref{rbdd c}, \eqref{rbdd ab}, \eqref{est:d1}, \eqref{est:d2}, and \eqref{est:d3}, for $n=0, 1$ and for any $m \in \N$, $\{\lambda_\ell\}_{\ell=1}^m \subset \C_{+, \lambda_3}$, and $\{\bH_\ell\}_{\ell=1}^m \subset \CX^2_q(\Omega)$, 
\begin{equation}\label{est:vij}
\begin{aligned}
	&\left(\int^1_0 \left \|\sum^m_{\ell=1} r_\ell(u) \{(\tau \pd_\tau)^n \CF^2_\lambda \CV^i_j(\lambda) \}|_{\lambda = \lambda_\ell} \bH_\ell \right \|_{L_q(\Omega^i_j)}^q \, du\right)^{1/q}\\
	&\le C_{q, \omega_0, R, M_2, M_3} \lambda_3^{-1/2} \left(\int^1_0 \left\|\sum^m_{\ell=1}  r_\ell(u) \bH_\ell \right \|_{\CX^2_q(\Omega \cap B^i_j)}^q\, du \right)^{1/q}.
\end{aligned}
\end{equation}
In the same way that we proved \eqref{rbdd c} by \eqref{rbdd c1}, 
we can obtain \eqref{Rbound V g} by \eqref{est:vij}.
This completes the proof of Lemma \ref{lem:Rbound V g}.

	\section{Proof of Theorem \ref{thm:main Rbound}}\label{sec:general2}
In this section, we shall prove Theorem \ref{thm:main Rbound} by Theorem \ref{thm:main general}.
Since the uniqueness of the solutions has been proved in Theorem \ref{thm:main general}, we only construct the $\CR$-bounded solution operator families for \eqref{main prob}.
Following the method in \cite[Sec.7]{S2020}, we first consider the whole space problem.
Let $E$ be an extension operator from $H^1_q(\Omega)$ to $H^1_q(\R^N)$ and $E_0 \bff$ be the zero extension of $\bff \in L_q(\Omega)^N$.
Let $\gamma_j = \gamma_j(x)$ $(j=1, 2, 3, 4)$ be the coefficients defined on $\overline{\Omega}$ satisfying Assumption \ref{assumption gamma}.
According to \cite[Lemma 7.2]{S2020}, we can extend $\gamma_j$ to uniformly Lipschitz functions $\tgamma_j$ defined on $\R^N$
such that the following conditions hold.
There is a constant $M>0$ depending solely on $\gamma^*$ and $L$ such that 

\begin{enumerate}
\item
	$\tgamma_j(x) = \gamma_j(x)$ for any $x \in \overline{\Omega}$,
\item
	$|\tgamma_j(x) - \tgamma_j(y)| \le (M+(\gamma_*/2)) |x-y|$ for $x, y \in \R^N$,
\item
	$\gamma_*/2 \le \tgamma_j(x) \le M+(\gamma_*/2)$ for any $x \in \R^N$
\end{enumerate}
for $j=1, 2, 3, 4$.
Then we know that the existence of $\CR$-bonded solution operator families for the following problem in $\R^N$ (cf. \cite[Theorem 7.1]{S2020}). 
\begin{equation}\label{extend RN}
\left\{
\begin{aligned}
	&\lambda R + \tgamma_1 \dv \bU = Ed & \quad&\text{in $\R^N$}, \\
	&\lambda \bU - (\tgamma_4)^{-1} 
	\DV\{\tgamma_2 \bD(\bU) + (\tgamma_3 - \tgamma_2) \dv \bU \bI + \tgamma_1  \Delta R \bI\}= E_0 \bff& \quad&\text{in $\R^N$}.
\end{aligned}
\right.
\end{equation}

\begin{prop}\label{prop:extend RN}
Let $1<q<\infty$, and let $\Omega$ be a uniform $C^3$-domain. 
Then there exists a constant $\lambda_4 \ge 1$ such that the following assertions hold true:

$\thetag1$
For any $\lambda \in \C_{+, \lambda_4}$ there exist operators  
\begin{align*}
	&\Phi (\lambda) \in 
{\rm Hol} (\C_{+, \lambda_4}, 
\CL(H^1_q(\R^N) \times L_q(\R^N)^N, H^3_q(\R^N)))\\
	&\Psi (\lambda) \in 
{\rm Hol} (\C_{+, \lambda_4}, 
\CL(H^1_q(\R^N) \times L_q(\R^N)^N, H^2_q(\R^N)^N))
\end{align*}
such that 
for any $(d, \bff) \in H^1_q(\Omega) \times L_q(\Omega)^N$,
\begin{equation*}
	R = \Phi (\lambda) (Ed, E_0\bff), \quad
	\bU = \CB (\lambda) (Ed, E_0\bff)
\end{equation*}
are solutions of problem \eqref{extend RN}.

$\thetag2$
There exists a positive constant $r$ such that
\begin{equation*} 
\begin{aligned}
	&\CR_{\CL(H^1_q(\R^N) \times L_q(\R^N)^N, \fA_q (\R^N))}
(\{(\tau \pd_\tau)^n \CR_\lambda \Phi (\lambda) \mid 
\lambda \in \C_{+, \lambda_4}\}) 
\leq r,\\
	&\CR_{\CL(H^1_q(\R^N) \times L_q(\R^N)^N, \fB_q(\R^N))}
(\{(\tau \pd_\tau)^n \CS_\lambda \Psi (\lambda) \mid 
\lambda \in \C_{+, \lambda_4}\}) 
\leq r
\end{aligned}
\end{equation*}
for $n = 0, 1$, where $\fA_q$ and $\CR_\lambda$ are defined in \eqref{def:space}.
Here, above constant $r$ depend solely on $N$, $q$, $L$, $\gamma_*$, and $\gamma^*$. 

\end{prop}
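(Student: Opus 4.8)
The plan is to treat \eqref{extend RN} as the whole-space (boundary-free) counterpart of Theorem \ref{thm:main general}: localize so that the globally Lipschitz coefficients $\tgamma_j$ are replaced by coefficients with arbitrarily small oscillation, solve each localized problem by Proposition \ref{prop:w}, patch the pieces together by a partition of unity, and remove the resulting commutator error by a Neumann-series argument for large $\lambda_4$. This is essentially the content of \cite[Theorem 7.1]{S2020}, and the steps below are the ones I would carry out.

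First I would fix the small number $\delta_1$ furnished by Proposition \ref{prop:w}, but applied with $[\gamma_*,\gamma^*]$ enlarged to $[\gamma_*/2,\,M+\gamma_*/2]$, which is legitimate because the $\tgamma_j$ take values there and $M$ depends only on $\gamma^*$ and $L$; all resulting constants then still depend only on $N,q,L,\gamma_*,\gamma^*$. Using the uniform Lipschitz bound $M+\gamma_*/2$ of $\tgamma_j$, choose $d_0\in(0,1)$ with $2(M+\gamma_*/2)d_0<\delta_1$, a covering $\{B_{d_0}(x_j)\}_{j\in\N}$ of $\R^N$ of finite intersection multiplicity, and a subordinate partition of unity $\{\xi_j\}$ with companion cutoffs $\{\widetilde\xi_j\}$ as in Proposition \ref{general domain}(\ref{g-3})--(\ref{g-4}), now without boundary patches. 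Set $\gamma_{kj}(x)=(\tgamma_k(x)-\tgamma_k(x_j))\widetilde\xi_j(x)+\tgamma_k(x_j)$; these satisfy $\sup_x|\gamma_{kj}-\tgamma_k(x_j)|\le 2(M+\gamma_*/2)d_0<\delta_1$ with $\tgamma_k(x_j)\in[\gamma_*/2,M+\gamma_*/2]$, a uniform gradient bound, and crucially $\gamma_{kj}=\tgamma_k$ on $\mathrm{supp}\,\xi_j$. Hence Proposition \ref{prop:w} applies to each frozen-coefficient whole-space problem \eqref{whole} with right member localized by $\widetilde\xi_j$, giving operators $\CA_j(\lambda),\CB_j(\lambda)$ whose $\CR$-bounds are uniform in $j$.

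Next I would build the parametrix $R=\sum_j\xi_j\rho_j$, $\bU=\sum_j\xi_j\bu_j$, where $(\rho_j,\bu_j)$ solves the $j$-th localized problem with data localized by $\widetilde\xi_j$; the cutoff-derivative terms appearing when one formats the localized data (e.g. $(\nabla\widetilde\xi_j)Ed$) are absorbed using $|\lambda|\ge1$, exactly as in Section \ref{sec.general1}. Convergence of these series in $H^3_q(\R^N)$ and $H^2_q(\R^N)^N$ and the bounds $\CR(\CR_\lambda A(\lambda)),\CR(\CS_\lambda B(\lambda))\le R$ follow from Lemmas \ref{lem:p1}, \ref{lem:p2}, Lemma \ref{lem:5.3}, and the finite-overlap inequality \eqref{intersect}. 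The parametrix solves \eqref{extend RN} up to a remainder $\CU(\lambda)$ which, because $\gamma_{kj}=\tgamma_k$ on $\mathrm{supp}\,\xi_j$, consists solely of commutators of the $\xi_j$ with the operators $\tgamma_1\nabla\Delta$ and $\DV\{\tgamma_2\bD(\cdot)+(\tgamma_3-\tgamma_2)\dv(\cdot)\bI\}$; every term is of strictly lower order, so $\CR_{\CL}(\{(\tau\pd_\tau)^n\CU(\lambda)\})\le C\lambda_4^{-1/2}$ for $n=0,1$ by the same bookkeeping as in the proof of Lemma \ref{lem:Rbound V g}. Choosing $\lambda_4$ with $C\lambda_4^{-1/2}\le1/2$, a Neumann series produces the inverse with $\CR$-bound $\le2$, and composing it with $A(\lambda),B(\lambda)$ (as in Subsection \ref{sec:rbdd half}) yields $\Phi(\lambda),\Psi(\lambda)$ with the asserted holomorphy and $\CR$-bounds.

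The point that needs care, and that makes this more than a verbatim copy of Section \ref{sec.general1}, is that here the input is $(Ed,E_0\bff)\in H^1_q(\R^N)\times L_q(\R^N)^N$ itself, not the formatted tuple $\CF^1_\lambda\bF=(\nabla d,\lambda^{1/2}d,\bff)$, and the output collection is $\CR_\lambda=(\nabla^3\rho,\lambda^{1/2}\nabla^2\rho,\lambda\rho)$ rather than the stronger $\CR^0_\lambda$. Inspecting the constant-coefficient whole-space solution formula, the Fourier symbol of $\rho$ is $\dfrac{\lambda+a|\xi|^2}{\lambda^2+a\lambda|\xi|^2+b|\xi|^4}\,\widehat{d}+(\text{multiplier})\cdot\widehat{\bff}$ with $a=(\tgamma_2+\tgamma_3)/\tgamma_4$ and $b=\tgamma_1^2/\tgamma_4$, so that $|\xi|^3\widehat\rho,\ \lambda^{1/2}|\xi|^2\widehat\rho,\ \lambda|\xi|\widehat\rho,\ \lambda\widehat\rho$ are bounded Mikhlin-type multipliers \emph{against} $|\xi|\widehat d$ and $\widehat{\bff}$, uniformly on $\C_{+,\lambda_4}$ --- the capillary term $\tgamma_1\nabla\Delta\rho$ being exactly what produces the gain that controls $\nabla^3\rho$ by $\|d\|_{H^1_q}$; by contrast $\lambda^{3/2}\widehat\rho$ against $\widehat{\bff}$ is not bounded, which is why $\CR_\lambda$ (and not $\CR^0_\lambda$) is the right output. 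The main obstacle is thus to carry these refined $\CR_\lambda$-estimates with $(d,\bff)$-type data through the localization (equivalently, to quote the variable-coefficient whole-space estimate of \cite[Theorem 7.1]{S2020}), and to verify that every commutator surviving in $\CU(\lambda)$ is genuinely lower order, so that it absorbs the factor $\lambda_4^{-1/2}$ once summed over $j$ with the finite-multiplicity bound \eqref{intersect}.
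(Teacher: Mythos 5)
Your strategy is sound, but note that the paper does not actually prove Proposition \ref{prop:extend RN}: it is invoked verbatim as \cite[Theorem 7.1]{S2020}, so the "paper's proof" is a citation. What you have written is a reconstruction of the proof of that cited theorem by the same localization machinery the paper deploys in Section \ref{sec.general1} (freeze the coefficients on small balls so that Proposition \ref{prop:w} applies with uniform constants, patch with a partition of unity, absorb the lower-order commutators by a Neumann series for large $\lambda_4$), and that reconstruction is consistent and correct. You also correctly isolate the one genuine discrepancy between Proposition \ref{prop:w} and the statement to be proved, namely that the data here is $(Ed,E_0\bff)\in H^1_q\times L_q$ rather than the $\lambda$-weighted tuple $(\nabla d,\lambda^{1/2}d,\bff)$, so one cannot simply compose with Proposition \ref{prop:w} (since $\|\lambda^{1/2}d\|_{L_q}$ is not uniformly controlled by $\|d\|_{H^1_q}$); the weakened output $\CR_\lambda$ in place of $\CR^0_\lambda$ is exactly the price of this, and your symbol computation is the right way to see that the frozen-coefficient building block still delivers $\CR_\lambda$ from $H^1_q\times L_q$ data.

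One small correction to your closing aside: the multiplier $\lambda^{3/2}|\xi|\,(\lambda^2+a\lambda|\xi|^2+b|\xi|^4)^{-1}$ acting on $\widehat{\bff}$ \emph{is} bounded on $\C_{+,\lambda_4}$ (by $\lambda^{3/2}|\xi|\lesssim |\lambda|^2+|\xi|^4$ and the lower bound $|\lambda^2+a\lambda|\xi|^2+b|\xi|^4|\gtrsim(|\lambda|+|\xi|^2)^2$). The term that genuinely obstructs $\lambda^{3/2}\rho$ is the $d$-contribution $\lambda^{3/2}(\lambda+a|\xi|^2)(\lambda^2+a\lambda|\xi|^2+b|\xi|^4)^{-1}\widehat{d}$, which behaves like $\lambda^{1/2}\widehat{d}$ near $\xi=0$ and cannot be traded against $|\xi|\widehat{d}$ either; equivalently, the identity $\lambda^{3/2}\rho=\lambda^{1/2}d-\gamma_1\lambda^{1/2}\dv\bu$ shows one needs $\lambda^{1/2}d$ in the data. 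This does not affect your argument, since the conclusion (only $\CR_\lambda$ is available) is what the proposition asserts.
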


Setting $\rho = R + \theta$ in \eqref{main prob}, then $(\theta, \bu, h)$ satisfies
\begin{equation}\label{main prob'}
\left\{
\begin{aligned}
	&\lambda \theta + \gamma_1 \dv \bu = \widetilde d & \quad&\text{in $\Omega$}, \\
	&\lambda \bu - \gamma_4^{-1} 
	\DV\{\gamma_2 \bD(\bu) + (\gamma_3 - \gamma_2) \dv \bu \bI + \gamma_1  \Delta \theta \bI\}= \widetilde \bff& \quad&\text{in $\Omega$},\\
	&\{\gamma_2 \bD(\bu) + (\gamma_3 - \gamma_2) \dv \bu \bI + \gamma_1  \Delta \theta \bI \} \bn - \sigma \Delta_{\Gamma} h \bn = \widetilde \bg & \quad&\text{on $\Gamma$},\\
	&\bn \cdot \nabla \theta = \widetilde k & \quad&\text{on $\Gamma$},\\
	&\lambda h - \bu \cdot \bn = \zeta & \quad&\text{on $\Gamma$},
\end{aligned}
\right.
\end{equation}
where
\begin{align*}
	\widetilde d &= \gamma_1 \dv \bU, & \widetilde \bff &= \lambda \bU - \gamma_4^{-1} \DV(\gamma_2 \bD(\bU) + (\gamma_3 - \gamma_2) \dv \bU \bI),\\
	\widetilde \bg &= \bg- \gamma_1 \Delta R \bn, & \widetilde k &= k- \bn \cdot \nabla R. 
\end{align*}
Note that the unit outer normal $\bn$ to $\Gamma$ can be extended to $C^2$ function $\widetilde \bn$ on $\R^N$ satisfying $\|\widetilde \bn\|_{H^2_\infty (\R^N)} \le C$ (cf. \cite[Corollary A.3]{ScS}).
Theorem \ref{thm:main general} and Proposition \ref{prop:extend RN} yield that the solution $(\rho, \bu, h)$ of \eqref{main prob} can be expressed by
\begin{equation}\label{sol of main prob}
\begin{aligned}
	\rho &= R + \theta = \Phi(\lambda)(Ed, E_0 \bff) + \CA_4(\lambda) \CF_\lambda^2(\widetilde d, \widetilde \bff, \widetilde \bg, \widetilde k, \zeta),\\
	\bu &= \CB_4(\lambda) \CF_\lambda^2(\widetilde d, \widetilde \bff, \widetilde \bg, \widetilde k, \zeta),\quad
	h = \CC_4(\lambda) \CF_\lambda^2(\widetilde d, \widetilde \bff, \widetilde \bg, \widetilde k, \zeta).
\end{aligned}
\end{equation}
Let $\bG = (G_1, \dots, G_8) \in \CX^0_q(\Omega)$. Here
$G_1, \dots, G_8$ are corresponding to $d$, $\bff$, $\nabla \bg$, $\lambda^{1/2} \bg$, $\nabla^2 k$, $\lambda^{1/2} \nabla k$, $\lambda k$, and $\zeta$, respectively.
Set
\begin{align*}
	\CD_5(\lambda) \bG = &\CD_4(\lambda) (\gamma_1 \dv \Psi(\lambda)(E G_1, E_0 G_2), \\
	& \lambda \Psi(\lambda)(E G_1, E_0 G_2) - \gamma_4^{-1} \DV(\gamma_2 \bD(\Psi(\lambda)(E G_1, E_0 G_2)) + (\gamma_3 - \gamma_2) \dv \Psi(\lambda)(E G_1, E_0 G_2)\bI),\\
	& G_3 - \nabla (\gamma_1 \Delta \Phi(\lambda)(E G_1, E_0 G_2) \widetilde \bn),
	\enskip G_4 - \lambda^{1/2} (\gamma_1 \Delta \Phi(\lambda)(E G_1, E_0 G_2) \widetilde \bn),\\
	& G_5 - \nabla^2 (\widetilde \bn \cdot \nabla \Phi(\lambda)(E G_1, E_0 G_2)),
	\enskip G_6 - \lambda^{1/2} \nabla (\widetilde \bn \cdot \nabla \Phi(\lambda)(E G_1, E_0 G_2)),\\
	&G_7 - \lambda (\widetilde \bn \cdot \nabla \Phi(\lambda)(E G_1, E_0 G_2)), \enskip \zeta),
\end{align*}
where $\CD \in \{\CA, \CB, \CC\}$.
Then set
\[
	\CA(\lambda) \bG = \Phi(\lambda) (E G_1, E_0 G_2) + \CA_5(\lambda) \bG, \enskip
	\CB(\lambda) \bG = \CB_5(\lambda) \bG, \enskip \CC(\lambda) \bG = \CC_5(\lambda) \bG.
\]
In view of \eqref{sol of main prob}, the solution $(\rho, \bu, h)$ of \eqref{main prob} is written as 
\[
	(\rho, \bu, h) = (\CA(\lambda) \CF_\lambda \bF, \enskip \CB(\lambda) \CF_\lambda \bF,  \enskip \CC(\lambda) \CF_\lambda \bF)
\]
for $\bF = (d, \bff, \bg, k, \zeta)$.
The desired estimate \eqref{rbdd general2} follows from the definition of $\CR$-boundedness (cf. Definition \ref{dfn2}) together with Lemma \ref{lem:5.3}, Theorem \ref{thm:main general}, and Proposition \ref{prop:extend RN},
which completes the proof of Theorem \ref{thm:main Rbound}.


\begin{thebibliography}{9}
{\small 
\bibitem{BDL} D.~Bresch, B.~Desjardins and C.~K.~Lin, 
 {\it On some compressible fluid models: Korteweg, lubrication
and shallow water systems}, 
Comm. Partial Differential Equations,
{\bf 28} (2003) 843--868.
\bibitem{CK}N.~Chikami and T.~Kobayashi,
{\it Global well-posedness and time-decay estimates of the compressible Navier-Stokes-Korteweg system in critical Besov spaces},
 J. Math. Fluid Mech., {\bf 21} (2019), no. 2, Art. 31.
\bibitem{DD} R.~Danchin, B.~Desjardins, 
 {\it Existence of solutions for compressible fluid models of Korteweg
type}, 
Ann. Inst. Henri Poincare Anal. Nonlinear,
{\bf 18} (2001) 97--133.
\bibitem{DHP} R.~Denk, M.~Hieber and J.~Pr\"u\ss, 
{\em $\CR$-boundedness, Fourier multipliers and problems of 
elliptic and parabolic type}, \newblock Memoirs of AMS. Vol 166. 
No. 788. 2003.
\bibitem{DS} J.~E.~Dunn and J.~Serrin, 
 {\it On the thermomechanics of interstital working}, 
Arch. Ration. Mech. Anal.,
{\bf 88} (1985) 95--133.
\bibitem{ES} Y.~Enomoto and Y.~Shibata, {\it On the $\CR$-sectoriality 
and its application to    
some mathematical study of the viscous compressible fluids}, 
Funk. Ekvac., {\bf 56} (2013), 441--505. 
\bibitem{H} E.~Hanzawa, {\it Classical solutions of the Stefan problem}, Tohoku Math. J. {\bf 33}
(1981) 297–335.
\bibitem{H2011} B.~Haspot, 
 {\it Existence of global weak solution for compressible fluid models of Korteweg type}, 
J. Math. Fluid Mech.,
{\bf 13} (2011) 223--249.
\bibitem{HL1} H.~Hattori, D.~Li,
 {\it Solutions for two dimensional systems for materials of Korteweg type}, SIAM
J. Math. Anal., {\bf 25} (1994) 85--98.
\bibitem{HL2} H.~Hattori, D.~Li,
 {\it Golobal solutions of a high dimensional systems for Korteweg materials}, 
J. Math. Anal. Appl., {\bf 198} (1996) 84--97.
\bibitem{HL96} H.~Hattori, D.~Li,
 {\it The existence of global solutions to a fluid dynamic model for materials for Korteweg type}, 
J Partial Diff Eqs., {\bf 9} (1996) 323--342.
\bibitem{HPZ} X.~Hou, H.~Peng, and C.~Zhu,
 {\it Global classical solutions to the 3D Navier-Stokes-Korteweg
equations with small initial energy},
Anal. Appl., {\bf 16 (1)} (2018) 55--84.
\bibitem{HPZ18} X.~Hou, H.~Peng, and C.~Zhu,
 {\it  Global well-posedness of the 3D non-isothermal compressible fluid model of
Korteweg type}, Nonlinear Anal Real World Appl,, {\bf 43} (2018) 18-–53.
\bibitem{IS} S.~Inna and H.~Saito,
 {\it Local Solvability for a Compressible Fluid Model of Korteweg Type on General Domains},
Mathematics, {\bf 11 (10)} (2023)  Paper No.2368. 
\bibitem{KSX} S.~Kawashima, Y.~Shibata, J.~Xu,
{\it The $L_p$ energy methods and decay for the compressible Navier–Stokes equations
with capillarity}, J. Math. Pures Appl. {\bf 9 (154)} (2021) 146--184.
\bibitem{KMS} T.~Kobayashi, M.~Murata, and H.~Saito,
 {\it Resolvent estimates for the compressible fluid model of Korteweg type and their application}
J. Math. Fluid Mech., {\bf 24 (1)} (2022) Paper No.12. 
\bibitem{KT} T.~Kobayashi and K.~Tsuda,
 {\it 
Global existence and time decay estimate of solutions to the compressible Navier-StokesKorteweg system under critical condition}.
Asymptot. Anal., {\bf 121 (2)} (2021) 195–-217.
\bibitem{KM} T.~Kobayashi and M.~Murata,
{\it The global well-posedness of the compressible 
fluid model of Korteweg type for the critical case} 
Differ. Integral Equ., {\bf 34(5/6)}, (2021) 245–-264.
\bibitem{K} D.~J.~Korteweg,
 {\it Sur la forme que prennent les \'equations du mouvement des fluides si lfon tient compte des forces capillaires caus\'ees par des variations de densit\'e consid\'erables mais continues et sur la th\'eorie de la capillarite dans lfhypoth\'ese dfune variation continue de la densit\'e},
Archives N\'eerlandaises des sciences exactes et naturelles, (1901) 1--24.
\bibitem{K2008} M.~Kotschote, {\it Strong solutions for a compressible fluid model of Korteweg type}, Ann. Inst. H. Poincar\'e Anal. Non
Lin\'eaire {\bf 25} (4) (2008) 679--696.
\bibitem{K2010} M.~Kotschote, {\it Strong well-posedness for a Korteweg-type model for the dynamics of a compressible non-isothermal
fluid}, J. Math. Fluid Mech. {\bf 12 (4)} (2010) 473–-484.
\bibitem{K2012} M.~Kotschote, {\it Dynamics of compressible non-isothermal fluids of non-Newtonian Korteweg type}, SIAM J. Math. Anal.
{\bf 44}(1), (2012) 74--101.
\bibitem{K2014} M.~Kotschote, {\it Existence and time-asymptotics of global strong solutions to dynamic Korteweg models}, Indiana Univ.
Math. J. {\bf 63} (2014) 21--51.
\bibitem{L} A.~Lunardi
{\em Analytic Semigroups and Optimal Regularity in Parabolic Problems}, \newblock 
Birkh\"auser, 2012.
\bibitem{MS} M.~Murata and Y.~Shibata,
{\it The global well-posedness for the compressible 
fluid model of Korteweg type},
SIAM J. Math. Anal., {\bf 52 (6)} (2020) 6313--6337.
\bibitem{S2020} H.~Saito,
{\it On the maximal $L_p$-$L_q$ regularity
for a compressible fluid model
of Korteweg type on general domains},
J. Differential Equations, {\bf 268 (6)} (2020) 2802--2851.
\bibitem{S} H.~Saito, {\it Compressible fluid model of Korteweg Type with free boundary condition: model problem}, 
Funk. Ekvac., {\bf 62} (2019) 337--386. 
\bibitem{ScS} K.~Schade and Y.~Shibata, {\it On Strong Dynamics of Compressible Nematic Liquid Crystals},
SIAM J. Math. Anal.
{\bf 47}(5), (2015) 3285--4090.
\bibitem{S2014} Y.~Shibata,
{\it On the $\CR$-boundedness of solution operators for the Stokes equations with free boundary condition}, 
Differ. Integral Equ. {\bf 27}(3–4), (2014) 313-–368.
\bibitem{SS} Y.~Shibata and S.~Shimizu, {\it On the maximal $L_p$-$L_q$ regularity of the Stokes problem with first order boundary condition; model problems},
J. Math. Soc. Japan, {\bf 64} (2012), 561--626.
Funk. Ekvac., {\bf 62} (2019) 337--386. 
\bibitem{S2016} Y.~Shibata, {\it On $\CR$-bounded solution operators in the study of free boundary problem for the Navier-Stokes equations},
Springer Proceedings in Mathematics $\&$ Statistics {\bf 183}, Mathematical Fluid Dynamics, Present and Future, Tokyo (2016) 203--285. 
\bibitem{SX23} Z.~Song and J.~Xu, 
 {\it Global existence and analyticity of Lp solutions to the compressible fluid model of Korteweg type}, 
J. Differ. Equ., {\bf 370} (2023) 101--139.
\bibitem{SX} Z.~Song, and J.~Xu,
 {\it Decay of higher order derivatives for Lp solutions to the compressible fluid model of Korteweg type}, 
J. Math. Anal. Appl., {\bf 541 (2)} (2025), Paper No. 128694.
\bibitem{MM} S.~Maryani, M.~Murata,
{\it $\CR$-bounded operator families arising from a compressible fluid model of Korteweg type with surface tension in the half-space},
Complex Anal. Oper. Theory.
{\bf 18 (8)} (2024) Paper No. 173.
\bibitem{TW} Z.~Tan, H.~Q.~Wang,
{\it Large time behavior of solutions to the isentropic 
compressible fluid
models of Korteweg type in $\BR^3$}, 
Commun. Math. Sci.
{\bf 10 (4)} (2012) 1207--1223.
\bibitem{TWX} Z.~Tan, H.~Q.~Wang, and J.~K.~Xu,
{\it Global existence and optimal $L^2$ decay rate for the strong
solutions to the compressible 
fluid models of Korteweg type}, 
J. Math. Anal. Appl.,
{\bf 390} (2012) 181--187.
\bibitem{TZ} Z.~Tan and R.~Zhang,
{\it Optimal decay rates of the compressible 
fluid models of Korteweg type}, 
Z. Angew. Math. Phys.
{\bf 65} (2014) 279--300.
\bibitem{T} H.~Tanabe
{\it Functional Analytic Methods for Partial Differential Equations},
Monographs and Textbooks in Pure and Applied Mathematics, {\bf 204}, Marcel Dekker, Inc., New York, Basel, 1997.
\bibitem{WT} Y.~J.~Wang, Z.~Tan, {\it Optimal decay rates for the compressible fluid models of Korteweg type}, 
J. Math. Anal. Appl., {\bf 379} (2011) 256--271.
\bibitem{W} L.~Weis, 
{\it Operator-valued Fourier multiplier 
theorems and maximal $L_p$-regularity}. 
Math. Ann. {\bf 319} (2001) 735--758.
}\end{thebibliography}
\end{document}